\title{  The Kodaira dimension of contact 3-manifolds and geography of symplectic  fillings }
\newtheorem{thm}{Theorem}[section]
\newtheorem{prop}[thm]{Proposition}
\newtheorem{lemma}[thm]{Lemma}
\newtheorem{corr}[thm]{Corollary}
\theoremstyle{definition}
\newtheorem{rmk}[thm]{Remark}
\newtheorem{question}[thm]{Question}
\newtheorem{defn}[thm]{Definition}
\newtheorem{eg}[thm]{Example}
\begin{document}

\author{Tian-Jun Li and Cheuk Yu Mak\thanks{Both authors are supported by NSF-grant DMS 1207037.}}
\AtEndDocument{\bigskip{\footnotesize
  \textsc{School of Mathematics, University of Minnesota, Minneapolis, MN 55455} \par
  \textit{E-mail address}, Tian-Jun Li: \texttt{tjli@math.umn.edu} \par
  \addvspace{\medskipamount}
  \textsc{School of Mathematics, IAS, Princeton, NJ, 08540} \par
  \textit{E-mail address}, Cheuk Yu Mak: \texttt{cymak@math.ias.edu} \par
   \addvspace{\medskipamount}

}}

\date{\today}

\maketitle

\begin{abstract}

 We  introduce the Kodaira dimension of contact 3-manifolds and  establish some basic properties.
 In particular, contact 3-manifolds with distinct Kodaria dimensions behave differently when it comes to   the geography of various kinds of  fillings.
On the other hand,  we also prove that, given any contact 3-manifold,  there is a lower bound of  $2\chi+3\sigma$ for all its
minimal symplectic fillings.
This is motivated by  Stipsicz's result in \cite{Sti03} for Stein fillings.     Finally, we discuss various aspects of exact self cobordisms of fillable 3-manifolds.
\end{abstract}

\tableofcontents

\section{Introduction}

Understanding symplectic fillings of a given contact manifold $(Y,\xi)$ is one of the fundamental questions in contact/symplectic topology.
The ambitious goal is to classify all the Stein, exact,  or minimal strong symplectic fillings of a given contact manifold $(Y,\xi)$.
Although this  has been achieved for several families of contact 3-manifolds (\cite{Gromov}, \cite{Mc91}, \cite{Lis08}, etc), not much is known for a general contact manifold, even in dimension $3$.
A more realistic but still ambitious goal is to classify homology types of fillings for contact $3$-manifolds.
In this direction,  it was conjectured  by Stipsicz \cite{Sti03} that
$$\{(b_1(N),\chi(N),\sigma(N)| (N,\omega) \text{ is a Stein filling of } (Y,\xi))\}$$
is a finite set for any $(Y,\xi)$.
This is, however, disproved by Baykur and Van Horn-Morris in \cite{BaVHM15} (see also \cite{BaVHM16}, \cite{DKP15}).

In  \cite{LMY}, the authors together with Yasui discover that this conjecture (in fact, stronger version)  holds  when $(Y,\xi)$ admits a {\bf Calabi-Yau}  or a {\bf uniruled}/{\bf adjunction} cap (see Section \ref{s:Kodaira}). Uniruled caps and adjunction caps share all the properties when it comes to constraining fillings and we speculate that uniruled caps and adjunction caps actually coincide. For this reason, we  will overlook adjunction caps in this paper.
We find it convenient to  introduce the notion of Kodaira dimension of contact 3-manifolds in terms of such caps.

\begin{defn}
For a closed co-oriented contact $3$-manifold $(Y,\xi)$, the Kodaira dimension is defined as follows.
\begin{displaymath}
   Kod(Y,\xi) = \left\{
     \begin{array}{llr}
       -\infty &  \text{ if it admits a uniruled cap }\\
       0 & \text{ if it admits a Calabi-Yau cap but admits no uniruled cap} \\
       1 & \text{ if it does not admit Calabi-Yau cap or uniruled cap}
     \end{array}
   \right.
\end{displaymath}
\end{defn}


With  this notion, the results in \cite{LMY} mentioned above imply  that the symplectic filling version of Stipsicz's conjecture holds for any contact 3-manifold with $Kod=-\infty$, and the exact filling version of  Stipsicz's conjecture holds for any contact 3-manifold with $Kod=0$.
Moreover, explicit homological bounds can be explained in the case $Kod=-\infty$ given a uniruled cap, and in the case $Kod=0$ given a Calabi-Yau cap.

There are many contact 3-manifolds with $Kod =1$.
For Stein fillings of an arbitrary  contact 3-manifold, there is the beautiful result of Stipsicz:

\begin{thm}[\cite{Sti03}]\label{t:Stipsicz}
For any   contact $3$-manifold $(Y,\xi)$, the set
$$\{2\chi(N)+3\sigma(N) \in \mathbb{Z}| (N, \omega) \text{ is a Stein filling of } (Y,\xi)\}$$
is bounded from below.
\end{thm}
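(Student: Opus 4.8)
The plan is to exploit the fact that a Stein filling $(N,\omega)$ of $(Y,\xi)$ carries an almost complex structure whose canonical class $K_N = -c_1(N,\omega)$ restricts to a fixed class on $Y$ determined by $\xi$, and then to cap off $N$ with a fixed piece so as to apply gauge-theoretic or Seiberg–Witten–type constraints on closed (or "closed enough") almost-complex manifolds. First I would recall Stipsicz's original strategy: embed $(N,\omega)$ symplectically into a closed symplectic $4$-manifold $(X,\omega_X)$. This can be done, for instance, via the Lisca–Matić or Akbulut–Ozbagci embedding of Stein domains into closed Kähler (in fact projective) surfaces of general type, or via the Eliashberg/Etnyre capping procedure followed by symplectic embedding of the resulting concave end. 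The key point is that the "cap" $P = X \setminus \mathrm{int}(N)$ can be chosen to depend only on $(Y,\xi)$ (more precisely, on the open book / contact structure used), not on the filling $N$; its Euler characteristic and signature $\chi(P)$, $\sigma(P)$ are therefore fixed once and for all.

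The second step is to run the Seiberg–Witten / generalized adjunction argument on $X$. Since $X$ is a closed symplectic $4$-manifold with $b^+ > 1$ (one arranges this by blowing up or by choosing the ambient surface appropriately), Taubes's theorem gives $SW_X(K_X) = \pm 1$, so the Seiberg–Witten basic classes are constrained; in particular one obtains the usual inequality relating $K_X^2$ to $2\chi(X) + 3\sigma(X)$. Concretely, for a minimal symplectic $X$ with $b^+>1$ one has $K_X^2 = 2\chi(X) + 3\sigma(X) \geq 0$ (the symplectic Bogomolov–Miyaoka–Yau / Taubes inequality), and in general $K_X^2 \geq 2\chi(X)+3\sigma(X) - (\text{number of blow-ups})$. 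Now decompose $2\chi(X) + 3\sigma(X) = (2\chi(N) + 3\sigma(N)) + (2\chi(P) + 3\sigma(P)) - (2\chi(Y)+3\sigma(Y))$, using Novikov additivity of the signature and additivity of Euler characteristic along the separating hypersurface $Y$ (with $\chi(Y)=0$, $\sigma(Y)=0$). Since $2\chi(P)+3\sigma(P)$ is a constant depending only on $(Y,\xi)$, a lower bound on $2\chi(X)+3\sigma(X)$ translates directly into a lower bound on $2\chi(N)+3\sigma(N)$.

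The main obstacle — and the place where genuine work is needed — is ensuring that the capping construction is uniform in the filling: that one and the same concave cap (or closed-up ambient surface) can be glued to every Stein filling of $(Y,\xi)$, and that the closed manifold $X$ can be taken with $b^+ > 1$ so that Seiberg–Witten theory applies and is nonvanishing. This is exactly the technical heart of Stipsicz's argument, and it uses that Stein fillings of a fixed $(Y,\xi)$ all induce the same boundary data, together with Eliashberg's existence of symplectic caps and Lisca–Matić-type embeddings. A secondary point to handle carefully is the possibility that the resulting $X$ is not minimal: one must control the number of exceptional spheres, but since $X$ is obtained by capping a fixed filling region the count can be absorbed into the constant, or one passes to the minimal model and notes that blowing down only increases $2\chi+3\sigma$. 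Once these geometric inputs are in place, the inequality is a formal consequence of additivity of $\chi$ and $\sigma$ and the Taubes/Seiberg–Witten bound on the closed piece.
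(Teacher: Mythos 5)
Your overall strategy --- glue every Stein filling $N$ to one fixed cap $P$ with $b_2^+(P)>1$, apply Taubes--Seiberg--Witten theory to the closed symplectic manifold $X=N\cup_Y P$, and then use additivity of $\chi$ and $\sigma$ --- is exactly the skeleton of the argument this paper runs for the more general Theorem \ref{t:charNumBound} (the paper only cites Theorem \ref{t:Stipsicz}, but its Section \ref{s:maximal} reproves and extends it). The uniform cap exists by combining the Etnyre--Honda Stein cobordism to a Stein fillable contact manifold with the Lisca--Mati\'c embedding into a surface of general type, as in the proof of Theorem \ref{t:closedDonHyper}, so that part of your outline is sound.

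The genuine gap is in what you dismiss as a ``secondary point'': controlling the exceptional spheres of $X$. Since $2\chi(X)+3\sigma(X)=K_X^2=K_{X_{\min}}^2-L$, where $L$ is the number of exceptional classes and $X_{\min}$ is a minimal model, and Taubes only gives $K_{X_{\min}}^2\ge 0$ (for $b^+>1$, non-rational/ruled), the resulting inequality $2\chi(X)+3\sigma(X)\ge -L$ is worthless unless $L$ is bounded \emph{uniformly over all fillings} $N$. Your two suggested fixes both fail: the count cannot be ``absorbed into the constant'' because $X=N\cup P$ changes with $N$ (and the Baykur--Van Horn-Morris examples cited in the paper show the topology of Stein fillings of a fixed $(Y,\xi)$ can be unbounded), while the observation that blowing down increases $2\chi+3\sigma$ gives an inequality pointing the wrong way --- it bounds $2\chi+3\sigma$ of the minimal model from below by that of $X$, not vice versa. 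The missing idea, which is the content of Lemma \ref{l:non-uniruled_maximal} here (and the heart of Stipsicz's original argument), is to fix a closed symplectic surface $D\subset P$ of positive genus meeting every exceptional class positively --- a maximal, e.g.\ Donaldson, hypersurface as in Lemma \ref{l:maximalClass}, where minimality of the Stein filling $N$ is what forces every exceptional class of $X$ to hit $D$. The exceptional classes $e_i$ are pairwise orthogonal, $K_{X_{\min}}=K_X-\sum e_i$ is represented by a $J$-holomorphic curve for a $J$ making $D$ holomorphic, and positivity of intersections gives $K_{X_{\min}}\cdot D\ge 0$; hence $L\le\sum e_i\cdot D\le K_X\cdot D=2g(D)-2-D\cdot D$, a constant depending only on the cap. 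With that bound inserted, your additivity computation closes the proof.
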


The is the best topological obstruction  in the literature which works for all contact 3-manifolds $(Y,\xi)$.
It is natural to ask whether the analogous statement is true for exact fillings, or even for minimal strong fillings, which is a much weaker condition compared to Stein fillings (see \cite{Ghi05}, \cite{Wen13}).
We are able to  obtain an affirmative answer to this general question.

\begin{thm}\label{t:charNumBound}
For any   contact $3$-manifold $(Y,\xi)$, the set
$$\{2\chi(N)+3\sigma(N) \in \mathbb{Z}| (N, \omega) \text{ is a minimal strong symplectic filling of } (Y,\xi)\}$$
is bounded from below.
Moreover, the lower bound can be explicitly calculated given a polarized  symplectic cap.
\end{thm}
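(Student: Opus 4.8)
The plan is to reduce the statement to one about closed symplectic $4$-manifolds by capping off the filling, and then to extract a lower bound for $c_1^2=2\chi+3\sigma$ from the structure theory of symplectic $4$-manifolds, keeping explicit track of the correction terms coming from the cap and from exceptional spheres so that the bound depends only on the cap. I use throughout that $2\chi(Z)+3\sigma(Z)=c_1^2(Z)$ for a closed almost-complex $4$-manifold $Z$.

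First I would fix, once and for all, a polarized symplectic cap for $(Y,\xi)$: a compact symplectic $4$-manifold $(C,\omega_C)$ with concave boundary $(Y,\xi)$ containing a closed connected symplectic surface $D\subset C$ with $D\cdot D>0$. Its existence for every closed co-oriented contact $3$-manifold is guaranteed by combining Eliashberg's capping construction with Donaldson's hypersurface theorem: embed $(Y,\xi)$ as a contact-type hypersurface in a closed symplectic $4$-manifold with rational symplectic class, take a Donaldson hypersurface $D$ of large degree (so $D\cdot D>0$), and let $C$ be the cap side of a hypersurface contact isotopic to $Y$ chosen so that $D$ lies in it. Record the integers $g(D)$, $b_2(C)$ and $2\chi(C)+3\sigma(C)$; the final bound will be expressed through these. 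Now let $(N,\omega)$ be any minimal strong symplectic filling of $(Y,\xi)$ and form the closed symplectic $4$-manifold $X=N\cup_Y C$. By Novikov additivity of the signature together with $\chi(X)=\chi(N)+\chi(C)-\chi(Y)$ and $\chi(Y)=\sigma(Y)=0$,
\[
2\chi(N)+3\sigma(N)=c_1^2(X)-\bigl(2\chi(C)+3\sigma(C)\bigr),
\]
so it remains to bound $c_1^2(X)$ from below in terms of $C$.

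Next, let $\pi\colon X\to X_{\min}$ be a reduction to a minimal model, blowing down a maximal collection of disjoint symplectic exceptional spheres $E_1,\dots,E_k$, so that $c_1^2(X)=c_1^2(X_{\min})-k$. Since $N$ is minimal, relative minimality (cf.\ \cite{LMY}) allows us to take the $E_i$ inside $C$, and being pairwise disjoint of square $-1$ they are linearly independent in $H_2(C;\mathbb{Q})$, hence $k\le b_2(C)$. It thus suffices to bound $c_1^2(X_{\min})$ from below. By the classification of minimal symplectic $4$-manifolds via Kodaira dimension (Taubes, Liu, Li--Liu), $c_1^2(X_{\min})\ge 0$ unless $X_{\min}$ is an irrational ruled surface, i.e.\ an $S^2$-bundle over a closed surface $\Sigma_g$ with $g\ge 2$, in which case $c_1^2(X_{\min})=8(1-g)$. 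In this remaining case I bound $g$ using $D$. The composite of $\pi$ with the bundle projection $X_{\min}\to\Sigma_g$ is a smooth surjection $p\colon X\to\Sigma_g$ whose general fibre lies in a class $F$ with $F^2=0$ and $F\ne 0$; since $b^+(X)=1$ and $D\cdot D>0$, the class $F$ cannot lie in the negative-definite subspace $[D]^{\perp}$, so $[D]\cdot F\ne 0$. Hence $p|_D\colon D\to\Sigma_g$ is a smooth map of nonzero degree $[D]\cdot F$ between closed oriented surfaces, so $(p|_D)^{*}$ is injective on $H^1$ and $g(D)\ge g$. Therefore $c_1^2(X_{\min})=8(1-g)\ge 8\bigl(1-g(D)\bigr)$ here, and in all cases $c_1^2(X_{\min})\ge\min\{0,\,8(1-g(D))\}$. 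Combining the displays,
\[
2\chi(N)+3\sigma(N)\ \ge\ \min\{0,\,8(1-g(D))\}-b_2(C)-\bigl(2\chi(C)+3\sigma(C)\bigr),
\]
which is the asserted explicit lower bound.

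I expect the main obstacle to be the passage from $X$ to $X_{\min}$: one must know that when the filling $N$ is minimal, the exceptional spheres of the capped-off manifold $X$ can be isotoped off $N$ and into the cap, so that their number is governed by the topology of $C$ rather than that of $N$. This relative minimality is precisely where the hypothesis ``minimal'' is used, and it is what rules out the Baykur and Van Horn-Morris phenomenon of infinitely many homology types; establishing it for strong (rather than exact or Stein) fillings is the delicate point. A second point requiring care is the construction of a polarized cap whose invariants $g(D)$, $b_2(C)$ and $2\chi(C)+3\sigma(C)$ can be computed in concrete examples, which is what makes the stated bound genuinely explicit. Given these, the structure-theoretic input and the domination argument in the ruled case are routine.
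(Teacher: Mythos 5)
Your overall strategy---cap off, use $2\chi+3\sigma=c_1^2$, pass to a minimal model, control the defect $c_1^2(X)-c_1^2(X_{\min})$, and treat the irrational ruled case separately---is the same skeleton as the paper's, and your handling of the ruled case (the fibre class $F$ cannot lie in the negative definite subspace $[D]^{\perp}$, so $p|_D$ has nonzero degree and $g(D)\ge g$) is a correct and rather clean substitute for the paper's appeal to the maximal-surface inequality of \cite{LiZh11}. The gap is exactly in the step you flag as delicate: the bound $k\le b_2(C)$ on the number of exceptional spheres. The relative minimality statement from \cite{LMY} (Lemma \ref{l:maximalClass} here) does \emph{not} say that the exceptional spheres of $X=N\cup_Y C$ can be isotoped into $C$, nor even that their homology classes lie in the image of $H_2(C)\to H_2(X)$; it says only that, when $N$ is minimal, every symplectic exceptional class of $X$ pairs positively with the Lefschetz dual of the relative class $[(\omega_C,\alpha_C)]$, hence positively with $[D]$ when $D$ is a Donaldson hypersurface of the cap. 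An exceptional sphere can cross $Y$ and represent a class not supported in the cap (note that $H_2(X)$ is in general not $H_2(N)\oplus H_2(C)$), so linear independence in $H_2(C;\mathbb{Q})$ is unavailable and $b_2(C)$ does not bound $k$. Nor does any purely lattice-theoretic argument: pairwise orthogonal square $-1$ classes all pairing positively with a fixed positive-square class can a priori be arbitrarily numerous.

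The correct mechanism, and the one the paper uses in Lemma \ref{l:non-uniruled_maximal}, is intersection-theoretic rather than homological: each exceptional class satisfies $e_i\cdot[D]\ge 1$ by maximality of $D$, and $K_{\tilde{\omega}}=K_{\omega}-\sum e_i$ is a Gromov--Taubes class, hence represented by a $J$-holomorphic curve for a $J$ making $D$ holomorphic; positivity of intersections together with $D\cdot D>0$ gives $K_{\tilde{\omega}}\cdot D\ge 0$, whence $k\le\sum e_i\cdot D\le K_{\omega}\cdot D=2g(D)-2-D\cdot D$, a quantity computable from the polarized cap alone. Replacing your $b_2(C)$ by $K_{\omega}\cdot D$ repairs the argument. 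A secondary issue is your construction of the cap: taking a Donaldson hypersurface of a closed manifold containing $Y$ and asserting it can be arranged to lie on the cap side is not automatic. The paper instead routes through \cite{EtHo02} (a Stein cobordism to a Stein fillable contact manifold) and \cite{LisMat} (embedding a Stein filling into a minimal surface of general type as part of an affine piece), so that the ample compactifying divisor is disjoint from the filling by construction and can be smoothed to the required Donaldson hypersurface inside the cap.
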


Theorem \ref{t:charNumBound} together with the above results in \cite{LMY} for $Kod=-\infty$ and $Kod=0$ contact 3-manifolds
 provide a comprehensive geography picture  for various fillings of  contact 3-manifolds with a fixed Kodaira dimension.

Our approach is  based on the notion of a maximal surface in    \cite{LiZh11} and the notion of a Donaldson cap, which we introduce below.

\begin{defn}\label{d:maximalSurface}
Let $(X,\omega)$ be a closed symplectic four manifold and $D$ be a smooth symplectic surface in $X$.
Then $D$ is called {\bf maximal} if any symplectic exceptional class in $(X,\omega)$ pairs positively with $[D]$.
\end{defn}


A concave symplectic pair is $(P, \omega_P, \alpha_P)$ where $(P, \omega_P)$ is a concave symplectic manifold
and $\alpha_P$ is a contact one form on $\partial P$ induced by an inward pointing Liouville vector field.
Such a pair is called of rational period if $\frac{1}{2\pi}[(\omega_P, \alpha_P)]\in H^2(P,\partial P;\mathbb{Q})$.

\begin{defn}  \label{d:polarized}
Let $(P, \omega_P, \alpha_P)$ be a  concave symplectic pair with rational period. A {\bf closed} symplectic hypersurface $D$ is called a Donaldson hypersurface of   $(P, \omega_P, \alpha_P)$  if
it is Lefschetz dual to an integral  multiple of $\frac{1}{2\pi}[(\omega_P,\alpha_P)]$.
We will often just say that $D$ is a Donaldson hypersurface of  $(P, \omega_P)$.

A symplectic cap is called a Donaldson cap if it admits a Donaldson hypersurface, and a Donaldson cap with a chosen Donaldson hypersurface is called a polarized cap.
\end{defn}

We remark that polarized cap is a notion  inspired by \cite{Bir01} and \cite{Op13-1}.


In Section \ref{ss:boundE} we show that the lower bound of $2\chi +3\sigma$ follow from properties of maximal surfaces and the existence of Donaldson  cap, and we will establish such an existence  using \cite{EtHo02} and \cite{LisMat}.

Along the way, we also prove that

\begin{thm}\label{t:universalMinimal}
For any co-oriented contact $3$-manifold $(Y,\xi)$, there exist a symplectic cap $(P,\omega_P)$ of $(Y,\xi)$ such that
for any minimal strong symplectic filling $(N,\omega_N)$ of $(Y,\xi)$, the glued symplectic manifold $(N \cup P,\omega)$ is minimal.
In particular,  any minimal convex symplectic 4-manifold embeds into a minimal closed symplectic 4-manifold.
\end{thm}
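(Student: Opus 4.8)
The plan is to realize $P$ as a Donaldson cap --- a symplectic cap of $(Y,\xi)$ carrying a closed symplectic hypersurface $D$ Lefschetz dual to a large multiple of $\frac{1}{2\pi}[(\omega_P,\alpha_P)]$ --- chosen so that $D$ is a maximal surface in \emph{every} glued manifold $N\cup P$, and then to use $D$ to rule out symplectic $(-1)$-spheres.

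I would first construct the Donaldson cap. Starting from an open book supporting $(Y,\xi)$, \cite{EtHo02} yields a concave symplectic pair $(P,\omega_P,\alpha_P)$; a $C^{0}$-small perturbation rel the concave boundary makes it of rational period, and blowing down $P$ makes it minimal. Embedding the Liouville-skeleton picture of $P$ into a closed K\"ahler surface in the spirit of \cite{LisMat} (or running Donaldson's asymptotically holomorphic construction relative to the concave end) then produces, for all large $k$, a connected closed symplectic hypersurface $D\subset P$ Lefschetz dual to $k\cdot\frac{1}{2\pi}[(\omega_P,\alpha_P)]$, with $[D]^2$ as large as we please, and with $P\setminus\nu(D)$ a Weinstein cobordism from $(Y,\xi)$ to the ideal boundary of the disk bundle $\nu(D)$.

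Now fix a minimal strong filling $(N,\omega_N)$ and form $(X,\omega)=(N\cup P,\omega)$. Because $D$ is disjoint from $N$, its Poincar\'e dual lies in the image of $H^2(X,N)\to H^2(X)$, and under the excision isomorphism $H^2(X,N)\cong H^2(P,\partial P)$ it corresponds to $k\cdot\frac{1}{2\pi}[(\omega_P,\alpha_P)]$; this is the sense in which $D$ polarizes the cap side of $X$. Given a symplectic $(-1)$-sphere $C\subset X$, positivity of intersections lets us isotope $C$ so that $[D]\cdot[C]=\#(C\cap D)\ge 0$, with equality only when $C$ lies in $X\setminus\nu(D)=N\cup(P\setminus\nu(D))$. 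But this region is obtained from the minimal filling $N$ by attaching the Weinstein cobordism $P\setminus\nu(D)$, and --- a point needing its own argument since $N$ is merely a strong filling --- minimality is preserved under such attachments, so $X\setminus\nu(D)$ carries no exceptional sphere. Hence $[D]\cdot[C]>0$ for every exceptional class, i.e.\ $D$ is a maximal surface in $(X,\omega)$. To pass from ``every exceptional sphere meets $D$'' to ``there is none'', I would invoke the stronger structure: $\nu(D)$ is a disk bundle of large Euler number, hence contains no exceptional sphere and is not of the excluded ruled type; $W':=X\setminus\nu(D)$ is minimal; and the resulting normal-connected-sum presentation $X=W'\cup_{\partial\nu(D)}\nu(D)$, together with the largeness of $[D]^2$ and the control on the genus of $D$ from the construction, forbids a $(-1)$-sphere from straddling $Y$. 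The closing assertion --- that any minimal convex symplectic $4$-manifold embeds into a minimal closed one --- then follows by gluing it to such a cap.

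The principal difficulty is that $P$, together with $D$, must be manufactured once and for all, with no access to $N$, yet the above must go through for \emph{every} minimal strong filling, a far wider class than Stein or exact fillings. Keeping $P\setminus\nu(D)$ a Weinstein cobordism and preserving minimality under the gluing across $Y$, when $N$ supports no global Liouville form, is where the bulk of the work sits; this is precisely where the maximal-surface theory of \cite{LiZh11} and the polarized cap are indispensable.
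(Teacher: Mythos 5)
Your construction of a Donaldson cap and the verification that its Donaldson hypersurface $D$ is maximal in every glued manifold $N\cup P$ reproduces the first half of the paper's argument (Theorem \ref{t:closedDonHyper} and Lemma \ref{l:DonAreMax}), but the last step of your proposal is where the theorem genuinely lives, and that step does not work. Maximality of $D$ only says that every symplectic exceptional class $e$ in $X=N\cup P$ satisfies $e\cdot[D]>0$; it does not rule such classes out, and no appeal to the ``largeness of $[D]^2$'' or to genus control can convert ``every exceptional sphere meets $D$'' into ``there is no exceptional sphere''. Concretely, let $N=B^4$ be the minimal filling of $(S^3,\xi_{std})$ and let $P=X_0\setminus B^4$ where $X_0$ is a non-minimal closed symplectic $4$-manifold; a high-degree Donaldson hypersurface $D\subset P$ pairs positively with every exceptional class (its dual is a multiple of $[\omega]/2\pi$), the disk bundle $\nu(D)$ contains no exceptional sphere, and $X_0\setminus\nu(D)$ is a Stein domain, hence minimal --- yet $X_0=N\cup P$ is non-minimal, so an exceptional sphere does ``straddle'' $\partial\nu(D)$ exactly in the way you assert is forbidden. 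This is the configuration your argument must eliminate, and it cannot be eliminated for a general Donaldson cap.

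The paper closes this gap by modifying the cap rather than arguing directly on $N\cup P$. Starting from a maximal surface $D\subset P$ of genus $g>0$ and square $s$, it produces a genus-$g$ surface $D''$ of square $-s$ inside a blow-up $X'$ of $T^4$, arranged (via Taubes--Seiberg--Witten theory) so that every exceptional sphere of $X'$ meets $D''$, i.e.\ $D''$ is maximal in $X'$. The new cap $P'$ is the Gompf symplectic sum of $P$ and $X'$ along $D$ and $D''$, so that $N\cup P'$ is the fiber sum of $N\cup P$ and $X'$ along surfaces that are maximal on each side; Usher's minimality theorem for symplectic sums \cite{Us09} is then the ingredient that actually kills all exceptional classes, including those intersecting $D$. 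Without some such mechanism --- fiber summing along the maximal surface, or an equivalent device --- your proof cannot be completed, although everything up to and including the maximality of $D$ in every $N\cup P$ is sound and matches the paper.
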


Notice that the  cap  in Theorem \ref{t:universalMinimal}  is universal in the sense that it works for any minimal filling of $(Y, \xi)$ (cf. \cite{AkOz02}).







Interestingly,  Theorem \ref{t:charNumBound}    leads to a similar restriction for exact self cobordisms of fillable contact 3-manifolds.

\begin{corr} \label{c:charSelfExact}
Suppose $(Y, \xi)$ is a strongly fillable contact $3$-manifold.
Then the set
$$\{ 2\chi(W)+3\sigma(W) \in \mathbb{Z} | (W, \omega) \text{ is an exact cobordism from } (Y, \xi) \text{ to itself}\}$$
is bounded below by $0$.
In particular, if it is also bounded above, then the set is $\{0\}$.
\end{corr}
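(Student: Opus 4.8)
The plan is to reduce Corollary \ref{c:charSelfExact} to the single inequality $\mathfrak{c}(W):=2\chi(W)+3\sigma(W)\ge 0$ for every exact cobordism $W$ from $(Y,\xi)$ to itself, and then to squeeze the sharp bound out of concatenation. Note first that $\mathfrak{c}$ is additive under stacking: if $W^{\cup k}$ denotes the concatenation of $k$ copies of $W$ (again an exact self-cobordism, after the standard rescaling of the Liouville forms so they match along the interfaces), then $\chi(W^{\cup k})=k\chi(W)$ and, by Novikov additivity of the signature, $\sigma(W^{\cup k})=k\sigma(W)$; here one uses that $\chi(Y)=0$ since $Y$ is a closed $3$-manifold. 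Thus $\mathfrak{c}(W^{\cup k})=k\,\mathfrak{c}(W)$. Granting $\mathfrak{c}(W)\ge 0$ for all such $W$, the trivial symplectization $([1,2]\times Y,\,d(t\alpha))$ is an exact self-cobordism with $\mathfrak{c}=0$, so $0$ lies in the set; and if some $W$ had $\mathfrak{c}(W)>0$, the cobordisms $W^{\cup k}$ would force the set to be unbounded above. Hence the set is contained in the non-negative integers, so it is bounded below by $0$, and if it is also bounded above it must equal $\{0\}$.

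It remains to prove $\mathfrak{c}(W)\ge 0$. Since $(Y,\xi)$ is strongly fillable, fix a strong filling and blow down exceptional spheres until it is minimal; call the result $(N,\omega_N)$. For each $k\ge 1$ set $N_k:=N\cup W^{\cup k}$, a strong symplectic filling of $(Y,\xi)$. The crucial point is that $N_k$ is again \emph{minimal}. Indeed, if $N_k$ contained a symplectic exceptional sphere, take a compatible $J$ making it holomorphic and stretch the neck along the contact interfaces separating $N$ from the exact layers. Because each copy of $W$ is exact, Stokes' theorem forces any holomorphic building component lying in one of these exact pieces (it has only negative punctures at the interface orbits) to have non-positive $\omega$-energy, hence to be trivial; so after discarding these pieces and regluing the symplectization levels the exceptional class is supported entirely in $N$, contradicting minimality of $N$. (This is the cobordism counterpart of the mechanism behind Theorem \ref{t:universalMinimal}, and is in fact simpler, since an exact cobordism automatically blocks exceptional spheres from passing through it.) With $N_k$ a minimal strong filling, Theorem \ref{t:charNumBound} provides a constant $C$, independent of $k$, with $2\chi(N_k)+3\sigma(N_k)\ge C$; and by the additivity above $2\chi(N_k)+3\sigma(N_k)=\mathfrak{c}(N)+k\,\mathfrak{c}(W)$. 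Therefore $\mathfrak{c}(N)+k\,\mathfrak{c}(W)\ge C$ for all $k\ge 1$, and letting $k\to\infty$ yields $\mathfrak{c}(W)\ge 0$.

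The main obstacle is the minimality of $N_k$, i.e.\ ruling out exceptional spheres that genuinely weave through both $N$ and the exact layers. The delicate part is the bookkeeping standard in such SFT degeneration arguments: handling multiply covered components of the limiting building and verifying that what is recovered inside $N$ after deleting the exact-layer pieces is an honest symplectic exceptional class, not merely a homology class of the correct type. Everything else — the additivity of $\mathfrak{c}$, the reduction to the case of a minimal filling, and the concatenation argument that upgrades ``bounded below'' to ``bounded below by $0$'' — is formal, and with the minimality statement in hand Corollary \ref{c:charSelfExact} follows at once.
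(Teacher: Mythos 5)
Your overall architecture is the same as the paper's: everything reduces, via Novikov additivity and Theorem \ref{t:charNumBound}, to the statement that gluing an exact self-cobordism $W$ onto a minimal strong filling $N$ of $(Y,\xi)$ again yields a minimal strong filling. (The paper packages this as Proposition \ref{l:exactCobAttach}, and deduces the corollary from the slightly different but equivalent observation that $N\cup W$ is a minimal filling of $(Y,\xi)$, hence $2\chi(N\cup W)+3\sigma(N\cup W)\ge n_{Y,\xi}$, giving $2\chi(W)+3\sigma(W)\ge 0$ directly when $N$ realizes the minimum $n_{Y,\xi}$; your concatenation-and-let-$k\to\infty$ variant is fine and avoids having to realize the minimum.)

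The genuine gap is exactly where you flag it: your proof of minimality of $N_k$ by neck-stretching is not complete, and the missing parts are the hard ones. Even granting SFT compactness and the Stokes argument that kills the building levels lying in the exact layers, what survives in (the completion of) $N$ is a possibly nodal, possibly multiply covered limit representing the class $e$ with $e^2=-1$; this does not immediately produce an embedded symplectic $(-1)$-sphere in $N$, so the contradiction with symplectic minimality of $N$ is not yet established. The paper avoids this entirely with a soft homological argument: cap $N\cup W$ off by a concave piece $P$ to form a closed manifold $X$; because $W$ is exact, the Lefschetz dual of the relative symplectic class of the cap $W\cup_{Y}P$ lies in the image of $H_2(P)\to H_2(X)$; and Lemma \ref{l:maximalClass} (a consequence of Taubes' Seiberg--Witten theory applied to $X$, using minimality of $N$) forces any exceptional class of $X$ not represented in $N$ to pair \emph{positively} with that dual class --- impossible for a class $e$ coming from $H_2(N\cup W)$. (One first reduces to $N$ minimal using Corollary \ref{c:disjointExceptional}, as you also do.) If you want to keep your SFT route you must either upgrade the limit to an embedded exceptional sphere or argue purely homologically that $e$ lies in the image of $H_2(N)$ and then invoke Proposition \ref{p:minimality}; otherwise, substituting the paper's Lemma \ref{l:maximalClass} argument for your neck-stretching step closes the gap and the rest of your proof goes through verbatim.
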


Notice that, the lower bound in Corollary \ref{c:charSelfExact}      can always be achieved by the trivial cobordism.
We remark that the strong fillability assumption in Corollary \ref{c:charSelfExact} is necessary.
A striking result by Eliashberg and Murphy \cite{EliMur15} implies that any almost complex  cobordism  between two smooth manifolds can be made into an exact cobordism for some overtwisted contact structures on the two boundary.
Together with a classical result by Van de Ven \cite{VdeV66}, one can easily construct an exact cobordism from $(S^3,\xi_{OT})$ to itself with very negative $2\chi+3\sigma$ (see Remark \ref{p:AlmostComplex}).

The paper is organized as follows.
In Section \ref{s:Kodaira},    we introduce the Kodaria dimension of  contact $3$-manifolds, discuss some of its properties and compute it for two important families of  fibred contact 3-manifolds.
In Section \ref{s:maximal},  we first  observe that for convex symplectic 4-manifolds,
the minimality defined in terms of symplectic -1 spheres is equivalent to the smooth minimality defined in terms of smooth -1 spheres.
Next we  provide
explicit  bounds on $2\chi+3\sigma$ for minimal strong fillings in terms of maximal surfaces. Theorem  \ref{t:charNumBound} is then proved after
establishing the existence of a Donaldson cap.
Finally,  we apply the circle of ideas to study the monoid of exact self cobordisms of fillable contact 3-manifolds.

The authors are supported by NSF-grants DMS  1207037. We appreciate  discussions with Youlin Li,
Mark McLean, Jie Min and Chris Wendl.

\section{Kodaira dimension of contact $3$-manifolds}\label{s:Kodaira}

We first review symplectic manifolds with contact boundary and fixing the notations.
Every contact manifold $(Y,\xi)$ in this paper is assumed to be closed and co-oriented.

A strong symplectic filling $(N,\omega_N)$ of $(Y,\xi)$ is a symplectic manifold with boundary $\partial N$ such that near $\partial N$, there is a locally defined
Liouville vector field $V$ (i.e. $d\iota_V \omega_N=\omega_N$) pointing {\bf outward} along $\partial N$ so that
the induced contact structure $\xi_N$ on $\partial N$ makes $(\partial N,\xi_N)$ contactomorphic to $(Y,\xi)$.
When $V$ is chosen, we denote the induced contact $1$-form on $\partial N=Y$ as $\alpha_N$.
We also call a strong symplectic filling a symplectic filling, a filling or a convex symplectic manifold.
An exact symplectic filling $(N,\omega_N)$ of $(Y,\xi)$ is a strong symplectic filling such that the Liouville vector field $V$ can be extended to a globally defined vector field.
A Stein filling is a special kind of exact symplectic filling and we refer readers to \cite{Oz15} for more details.

The definition of a strong symplectic capping $(P,\omega_P)$ of $(Y,\xi)$ is similar to that of a strong symplectic filling.
The only modification is that the locally defined Liouville vector field $V$ is required to point {\bf inward} instead of outward.
The induced contact $1$-form, which we call the {\bf Liouville $1$-form}, is denoted as $\alpha_P$ and $(P,\omega_P,\alpha_P)$ forms a concave symplectic pair.
A strong symplectic capping $(P,\omega_P)$ is also called a symplectic cap, a cap or a concave symplectic manifold.

For a choice of a filling $(N,\omega_N)$ and a cap $(P,\omega_P)$ of $(Y,\xi)$, we can glue them together to get a closed symplectic manifold $X=N \cup_Y P$ by inserting part of the symplectization of $(Y,\xi)$ (see \cite{Et98}).
The symplectic form $\omega$ on $X$ is not canonical and our convention is that $\omega|_P=\omega_P$ so $(N,\omega|_N)$ is obtained by attaching part of the symplectization of $(Y,\xi)$ to $(N,\lambda_P\omega_N)$, for some $\lambda_P >0$ small.
The actual rescaling factor $\lambda_P$ is not very important to our discussion so whenever a filling is glued with a cap, $\lambda_P$ is chosen implicitly.

An oriented cobordism $W$ from a closed oriented manifold $Y_-$ to  another $Y_+$
is a compact oriented manifold $W$ such that $\partial W= Y_+ \cup (-Y_-)$.  An oriented cobordism $W$ with a symplectic structure $\omega$ compatible with the orientation is called a (strong) symplectic cobordism if $(W, \omega)$ is symplectic concave
at $Y_-$ and convex at $Y_+$. A symplectic cobordism $(W, \omega)$ is called exact if the primitive
1-form near $Y_{\pm}$ extends to a global primitive 1-form.
Symplectic gluing can also be performed between symplectic cobordism and symplectic filling/cap.
These notations and conventions are used throughout the whole paper.

In Section \ref{ss:uniruledCY}, we recall  Kodaira dimension of closed symplectic 4-manifolds and various caps introduced in \cite{LMY}. Then  we introduce the Kodaria dimension of closed co-oriented contact $3$-manifolds which captures the topological complexity of its symplectic fillings.
Finally, we provide examples of Stein fillable contact torus bundles and circle bundles which attain various contact Kodaira dimensions.

\subsection{Symplectic Kodaira dimension, uniruled  and Calabi-Yau caps}\label{ss:uniruledCY}

Let $X$ be a closed, oriented smooth 4-manifold.
Let ${\mathcal E}_X$ be the set of cohomology
classes whose Poincar\'e duals are represented by smoothly embedded
spheres of self-intersection $-1$. $X$ is said to be (smoothly)
minimal if ${\mathcal E}_X$ is the empty set.
Equivalently, $X$ is minimal if it is not the connected sum of
another manifold  with $\overline{\mathbb {CP}^2}$.

\medskip
Suppose $\omega$ is a symplectic form compatible with the orientation.
$(X,\omega)$  is said to be (symplectically) minimal if ${\mathcal
E}_{\omega}$ is empty, where
$${\mathcal E}_{\omega}=\{E\in {\mathcal
E}_X|\hbox{ $E$ is represented by an embedded $\omega-$symplectic
sphere}\}.$$
We say that $(Z, \tau)$
is a minimal model of $(X, \omega)$ if $(Z, \tau)$ is  minimal and $(X, \omega)$ is
a symplectic blow up of $(Z, \tau)$.
A basic fact proved using Taubes' SW theory (\cite{Taubes96}, \cite{LiLiu95}, \cite{Li99})  is:
  ${\mathcal E}_{\omega}$ is
empty if and only if ${\mathcal E}_X$ is empty. In other words, $(X,
\omega)$ is symplectically minimal if and only if $X$ is smoothly
minimal.

For minimal $(X,\omega)$,  the Kodaira dimension of
$(X,\omega)$
 is defined in the following way in \cite{Li06s} (see also \cite{McSa96} \cite{Le97}):

$$
\kappa^s(X,\omega)=\begin{cases} \begin{array}{lll}
-\infty & \hbox{ if $K_{\omega}\cdot [\omega]<0$ or} & K_{\omega}\cdot K_{\omega}<0,\\
0& \hbox{ if $K_{\omega}\cdot [\omega]=0$ and} & K_{\omega}\cdot K_{\omega}=0,\\
1& \hbox{ if $K_{\omega}\cdot [\omega]> 0$ and} & K_{\omega}\cdot K_{\omega}=0,\\
2& \hbox{ if $K_{\omega}\cdot [\omega]>0$ and} & K_{\omega}\cdot K_{\omega}>0.\\
\end{array}
\end{cases}
$$
Here $K_{\omega}$ is defined as the first Chern class of the
cotangent bundle for any almost complex structure compatible with
$\omega$.

The invariant $\kappa^s$ is well defined  since there does not
exist a minimal $(X, \omega)$ with $$K_{\omega}\cdot [\omega]=0,
\quad\hbox{and}\quad  K_{\omega}\cdot K_{\omega}>0.$$
This again follows from Taubes' SW theory \cite{Li06s}.
Moreover,
 $\kappa^s$ is independent of $\omega$, so it is an oriented diffeomorphism invariant of $X$.

 The Kodaira dimension of a non-minimal manifold is defined to be
that of any of its minimal models.
This definition is well-defined and independent of choice of minimal model so $\kappa^s(X, \omega)$ is well-defined for any $(X, \omega)$ (cf. \cite{Li06s}).
When $\kappa^s(X, \omega)=0$ (resp. $\kappa^s(X, \omega)= -\infty$), $(X,\omega)$ a called a {\bf symplectic Calabi-Yau manifold} (resp. {\bf symplectic uniruled manifold})

In \cite{LMY} we introduced the analogues  of symplectic uniruled and Calabi-Yau manifolds for concave  4-manifolds.

\begin{defn}\label{d:CY}


 A {\bf uniruled cap} $(P,\omega_P,\alpha_P)$ of a contact 3-manifold $(Y,\xi)$ is a concave symplectic pair such that $c_{1}(P)\cdot[(\omega_P,\alpha_P)]>0$.


 A {\bf Calabi-Yau cap} $(P,\omega_P)$ of a contact 3-manifold $(Y,\xi)$ is a symplectic cap such that
 $c_1(P)$ is torsion.
\end{defn}


For a concave symplectic pairs $(P,\omega_P,\alpha_P)$, we have the following possibilities:
\begin{itemize}
\item $c_1(P) \cdot [(\omega_P,\alpha_P)] > 0$: $(P,\omega_P,\alpha_P)$ is a uniruled cap.
\item $c_1(P) \cdot [(\omega_P,\alpha_P)] =0$: $(P,\omega_P,\alpha_P)$ is either a Calabi-Yau cap or a uniruled cap after symplectic deformation.
\item $c_1(P) \cdot [(\omega_P,\alpha_P)] < 0$: it is still possible for $(P,\omega_P,\alpha_P)$ to become a uniruled cap after a symplectic deformation.
\end{itemize}
The second bullet above is true because if $c_1(P)$ is not torsion, we can find a $2$-form $\beta$ such that $c_1(P) \cdot [\beta]>0$ and deform
$(P,\omega_P,\alpha_P)$ in the $\beta$-direction to make it uniruled after deformation.

Uniruled caps and Calabi-Yau caps give surprisingly good control to symplectic fillings of $(Y,\xi)$. To state such results,
it is convenient to introduce the following notions for fillings.

\begin{defn}
For a symplectic cap $(P,\omega_P)$ and a symplectic filling $(N,\omega_N)$ of $(Y,\xi)$, the $(P,\omega_P)$-Kodaira dimension of $(N,\omega_N)$, $\kappa^{(P,\omega_P)}(N,\omega_N)$, is defined to be
the Kodaira dimension of $(X=N \cup_Y P,\omega)$.
\end{defn}

When $\kappa^{(P,\omega_P)}(N,\omega_N)=0$ (resp. $\kappa^{(P,\omega_P)}(N,\omega_N)=-\infty$), $(N,\omega_N)$ is called $(P,\omega_P)$-Calabi-Yau (resp. $(P,\omega_P)$-uniruled).

\begin{thm}[\cite{LMY}]\label{t:uniruled}
 Suppose  $Kod(Y,\xi)=-\infty$  and  $(P,\omega_P)$ is  a uniruled cap of $(Y, \xi)$.
Then $\kappa^{(P,\omega_P)}(N,\omega_N)=-\infty$ and
 there are integers $n_1,n_2,n_3$ depending only on $(P, \omega_P)$ such that $b_i(N) \le n_i$ for $i=1,2,3$ for any minimal symplectic filling $(N,\omega_N)$ of $(Y,\xi)$. Moreover, $b_2^+(N)=0$.
\end{thm}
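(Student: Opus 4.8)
The plan is to glue the given uniruled cap $(P,\omega_P)$ onto an arbitrary strong filling $(N,\omega_N)$ of $(Y,\xi)$, read off the structure of the resulting closed manifold $(X,\omega)=(N,\omega_N)\cup_Y(P,\omega_P)$ from Taubes--Seiberg--Witten theory, and then transport the conclusions back to $N$ by Mayer--Vietoris and Novikov additivity. First I would check that $\kappa^s(X,\omega)=-\infty$. Since $\omega|_P=\omega_P$ while the $N$--side is rescaled by a small $\lambda_P>0$, splitting $\int_X c_1(X)\wedge\omega$ over $P$ and over the (rescaled) $N$--side together with the interpolating collar, the $P$--contribution equals the relative pairing $c_1(P)\cdot[(\omega_P,\alpha_P)]>0$ (as $TX|_P=TP$ and $\omega_P$ represents the relative class), while the remaining contribution is $O(\lambda_P)$; hence $c_1(X)\cdot[\omega]>0$, i.e.\ $K_\omega\cdot[\omega]<0$, for $\lambda_P$ small enough. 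Passing to a minimal model $(Z,\tau)$ of $(X,\omega)$ and using that each symplectic blow--down strictly decreases $K\cdot[\omega]$, we get $K_\tau\cdot[\tau]<0$, so by the very definition of the symplectic Kodaira dimension $\kappa^s(Z,\tau)=-\infty$, hence $\kappa^s(X,\omega)=-\infty$; this is the assertion $\kappa^{(P,\omega_P)}(N,\omega_N)=-\infty$. In particular $X=Z\#k\,\overline{\mathbb{CP}^2}$ with $Z$ either $\mathbb{CP}^2$ or an $S^2$--bundle over a closed orientable surface $\Sigma_g$, so $b_2^+(X)=1$.

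Next I would obtain $b_2^+(N)=0$ almost for free. Since $N$ and $P$ sit inside $X$ as disjoint codimension--$0$ submanifolds, a class of positive square in $H_2(N;\mathbb{R})$ (resp.\ $H_2(P;\mathbb{R})$) maps to a nonzero class of the same positive square in $H_2(X;\mathbb{R})$, and $N$--classes are orthogonal to $P$--classes there; hence a maximal positive subspace for $N$ and one for $P$ together span a positive subspace of $(H_2(X;\mathbb{R}),Q_X)$, giving $b_2^+(N)+b_2^+(P)\le b_2^+(X)=1$. On the other hand $\int_P\omega_P\wedge\omega_P>0$ shows $[(\omega_P,\alpha_P)]$ has positive square, so $b_2^+(P)\ge1$; therefore $b_2^+(N)=0$ (and $b_2^+(P)=1$).

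For the remaining Betti bounds, note $b_2^-(X)=k+\varepsilon$ with $\varepsilon\in\{0,1\}$ and $b_1(X)\in\{0,2g\}$. Novikov additivity gives $\sigma(X)=\sigma(N)+\sigma(P)$, and since $b_2^+(N)=0$, $b_2^+(P)=1$, $b_2^+(X)=1$ this upgrades to $b_2^-(X)=b_2^-(N)+b_2^-(P)$. Combining this with $b_2^0(N)\le\dim\operatorname{im}\big(H_2(Y;\mathbb{R})\to H_2(N;\mathbb{R})\big)\le b_2(Y)$, with $b_3(N)\le b_1(N)$ (long exact sequence of $(N,\partial N)$, $Y$ connected), and with the Mayer--Vietoris estimate $b_1(N)\le b_1(X)+b_1(Y)$, one sees that $b_1(N),b_2(N),b_3(N)$---and then, via $\chi(N)=\chi(X)-\chi(P)$ and $\sigma(N)=\sigma(X)-\sigma(P)$, also $\chi(N)$ and $\sigma(N)$---are all bounded in terms of $(P,\omega_P)$, once $k$ and $g$ are bounded in terms of $(P,\omega_P)$ (here $b_i(Y)$ is determined by $\partial P=Y$). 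So the whole theorem reduces to such a bound.

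The hard part will be exactly bounding the blow--up number $k$ and the base genus $g$, and this is where the minimality of $N$ enters essentially: if $N$ were allowed to be non--minimal, $k$ could be made arbitrarily large while $Y$ and $P$ stayed fixed, so the exceptional and ruling structure of $X$ must really be ``trapped'' in the fixed piece $P$. The plan is to realize the rational/ruled structure of $X$ by $J$--holomorphic spheres of small Chern number passing through points of $P$ and to neck--stretch along $Y$: the limiting holomorphic buildings have a top level in the completion of $P$, and since $N$ carries no symplectic $(-1)$--sphere, no exceptional component of a building can be absorbed into the $\hat N$--level; then the adjunction/index formula bounds the combinatorial type of the buildings, hence $k$ and $g$, in terms of $\chi(P)$ and $\sigma(P)$. (Equivalently, one invokes the structure theory of uniruled symplectic $4$--manifolds relative to a uniruled cap developed for this purpose, cf.\ \cite{Taubes96},\cite{LiLiu95}.) Everything else is the homological bookkeeping above.
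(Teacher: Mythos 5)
The paper does not prove Theorem \ref{t:uniruled} at all: it is imported verbatim from \cite{LMY}, so there is no in-paper argument to compare against step by step. Judged on its own terms, your proposal correctly handles the two easier assertions. The computation $c_1(X)\cdot[\omega]=c_1(P)\cdot[(\omega_P,\alpha_P)]+O(\lambda_P)>0$, the monotonicity of $K\cdot[\omega]$ under blow-down, and the conclusion $\kappa^s(X,\omega)=-\infty$ are all fine (modulo the small imprecision that $[(\omega_P,\alpha_P)]^2$ is $\int_P\omega_P^2$ plus a boundary correction, which fortunately has the right sign), and the orthogonality argument giving $b_2^+(N)+b_2^+(P)\le b_2^+(X)=1$ with $b_2^+(P)\ge1$ is the standard route to $b_2^+(N)=0$. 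Your reduction of the Betti bounds to bounding the blow-up number $k$ and the base genus $g$ is also correct.

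The genuine gap is exactly where you flag it: the bound on $k$ and $g$ in terms of $(P,\omega_P)$ alone is the entire content of the theorem, and neither of your two suggestions fills it. The neck-stretching sketch is not carried out (you would need compactness and a mechanism ruling out buildings whose exceptional content degenerates into the $\hat N$-levels without producing a symplectic $(-1)$-sphere in $N$ itself), and the parenthetical ``invoke the structure theory developed for this purpose'' is circular. More importantly, the natural homological attempt fails: Lemma \ref{l:maximalClass} gives $E_i\cdot h>0$ for $h=PD[(\omega_P,\alpha_P)]$ and every exceptional class $E_i$, but since $h$ is only a real class there is no uniform positive lower bound on $E_i\cdot h$, and $k$ pairwise-orthogonal $(-1)$-classes all pairing positively with a fixed positive-square class is consistent with arbitrary $k$ (project to $h^{\perp}$ and note the Gram matrix $-I-\tfrac{1}{h^2}tt^{T}$ is negative definite for every $k$). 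This is precisely why the present paper introduces maximal surfaces and integral Donaldson hypersurfaces in Section \ref{s:maximal} to obtain the $2\chi+3\sigma$ bound for general caps: bounding $k+8g$ from above is equivalent to bounding $(2\chi+3\sigma)(N)=K_X^2-(2\chi+3\sigma)(P)$ from below, i.e.\ to Theorem \ref{t:charNumBound} in the uniruled setting, and that is not a formal consequence of the setup you have assembled. To complete the proof you must either reproduce the argument of \cite{LMY} specific to uniruled caps or supply a maximal surface in $P$ and run the argument of Lemma \ref{l:uniruled_maximal} and Corollary \ref{c:explicitBounds}.
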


\begin{thm}[\cite{LMY}]\label{t:CY}
Suppose $Kod(Y, \xi)=0$ and  $(P,\omega_P)$ is  a Calabi-Yau cap of $(Y, \xi)$.
Then $\kappa^{(P,\omega_P)}(N,\omega_N) \in \{ -\infty,0 \}$ and
there are integers $n_1,n_2,n_3$ depending only on $(P, \omega_P)$ such that $b_i(N) \le n_i$ for $i=1,2,3$ for any exact filling $(N,\omega_N)$ of $(Y,\xi)$.
\end{thm}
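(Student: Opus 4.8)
The plan is to glue $(N,\omega_N)$ to the Calabi--Yau cap, identify the Kodaira dimension of the resulting closed symplectic $4$-manifold, and then transport topological bounds on it back to $N$. Concretely, set $(X,\omega)=(N\cup_Y P,\omega)$, so that $\kappa^{(P,\omega_P)}(N,\omega_N)=\kappa^s(X,\omega)$ by definition.

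The first step is to compute $\kappa^s(X,\omega)$ from the hypotheses. Since $(N,\omega_N)$ is exact, $\omega|_N$ is again exact after the implicit rescaling, so $[\omega]\in H^2(X;\mathbb R)$ lies in the image of $H^2(X,N;\mathbb R)\cong H^2(P,Y;\mathbb R)$; equivalently it is Lefschetz dual to a class pushed forward from $H_2(P;\mathbb R)$. Because $K_X|_P=-c_1(P)$ is torsion (Definition~\ref{d:CY}), the product $K_X\cdot[\omega]$ then factors through the intersection pairing on $P$ and vanishes. Passing to a minimal model $(Z,\tau)$ and writing $K_X=\pi^{\ast}K_Z+\sum_{j=1}^{k}E_j$, $[\omega]=\pi^{\ast}[\tau]-\sum_j a_jE_j$ with all $a_j>0$, we obtain $0=K_X\cdot[\omega]=K_Z\cdot[\tau]+\sum_j a_j$. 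Hence either $X$ is non-minimal and $K_Z\cdot[\tau]<0$, so $\kappa^s(X)=-\infty$, or $X$ is minimal and $K_X\cdot[\omega]=0$ already excludes $\kappa^s(X)=1,2$; in both cases $\kappa^{(P,\omega_P)}(N,\omega_N)\in\{-\infty,0\}$. We would also record that if $\kappa^s(X)=0$ then $K_Z$ is torsion, hence $K_X\equiv\sum_jE_j$ rationally and $\sum_j a_j=K_X\cdot[\omega]=0$ forces $k=0$: in the Calabi--Yau case $X$ is itself a minimal symplectic Calabi--Yau surface.

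The second step bounds the Betti numbers. An exact filling contains no closed symplectic surface (Stokes), so $(N,\omega_N)$ is minimal and no symplectic exceptional sphere of $X$ lies in $\operatorname{int} N$. The Mayer--Vietoris sequence of $X=N\cup_Y P$ gives $b_i(N)\le b_i(X)+b_i(Y)$ for all $i$, and since $Y=\partial P$ is determined by $(P,\omega_P)$ it suffices to bound the Betti numbers of $X$ in terms of $(P,\omega_P)$. When $\kappa^s(X)=0$ we have just seen $X$ is a minimal symplectic Calabi--Yau surface, whose Betti numbers are bounded by universal constants via the Taubes--Seiberg--Witten classification of such surfaces (\cite{Taubes96}, \cite{LiLiu95}, \cite{Li06s}). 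When $\kappa^s(X)=-\infty$, McDuff's structure theorem \cite{Mc91} shows $X$ is $\mathbb{CP}^2$, or an $S^2$-bundle over a closed surface $\Sigma_h$, blown up $k$ times, so $b^+(X)=1$ and one must bound the base genus $h$ and the number of blow-ups $k$ in terms of $(P,\omega_P)$. The mechanism here is that a ruling sphere $F$ (with $F^2=0$ and $c_1(X)\cdot F=2$) can be isotoped neither into $\operatorname{int} N$ (it has positive $\omega$-area) nor into $\operatorname{int} P$ (there $c_1(X)$ restricts to the torsion class $c_1(P)$, which would force $c_1(X)\cdot F=0$); hence the ruling $X\to\Sigma_h$ restricts to surjections on both $N$ and $P$, and comparing these surjections through the Mayer--Vietoris square with $H_1$ of the fixed pieces $Y$ and $P$ bounds $h$, while the disjoint exceptional spheres either sit in $\operatorname{int} P$ (at most $b^-(P)$ of them, being orthogonal of square $-1$) or must cross $Y$, and bounding the latter against the fixed topology of $Y$ and the cap bounds $k$.

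The main obstacle will be this last piece of bookkeeping in the $\kappa^s=-\infty$ case: unlike the Calabi--Yau case, $X$ need not be minimal, so one is forced to exploit the geometry of the ruling together with the torsion of $c_1(P)$ and the fixed topology of $(Y,\xi)$ --- precisely the point where the uniruled-cap results of \cite{LMY} (Theorem~\ref{t:uniruled}) and the maximal-surface technology of \cite{LiZh11} come in. Combining the resulting bounds on $b_i(X)$ with $b_i(N)\le b_i(X)+b_i(Y)$ then yields the constants $n_1,n_2,n_3$ depending only on $(P,\omega_P)$.
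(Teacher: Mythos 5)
The paper itself offers no proof of Theorem~\ref{t:CY}: it is imported verbatim from \cite{LMY}, so the only thing to measure your proposal against is the argument in that reference. Your first step is correct and is essentially the standard one: exactness of $N$ places $PD[\omega]$ in the image of $H_2(P;\mathbb R)\to H_2(X;\mathbb R)$, torsion of $c_1(P)$ then gives $K_\omega\cdot[\omega]=0$, and the blow-up relation $0=K_Z\cdot[\tau]+\sum_j a_j$ with $a_j>0$ correctly yields $\kappa^s(X)\in\{-\infty,0\}$, together with the useful observation that $\kappa^s(X)=0$ forces $k=0$, so that $X$ is then a minimal symplectic Calabi--Yau surface. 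The Mayer--Vietoris reduction $b_i(N)\le b_i(X)+b_i(Y)$ and the appeal to the universal Betti-number bounds for minimal symplectic Calabi--Yau $4$-manifolds dispose of the $\kappa^s(X)=0$ branch completely.

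The genuine gap is the branch $\kappa^s(X)=-\infty$, which you flag as ``the main obstacle'' and then only gesture at. There you must bound the base genus $h$ and the number of blow-ups $k$ of the rational or ruled manifold $X$ in terms of $(P,\omega_P)$ alone, and neither of your proposed mechanisms is an argument. From the (correct) fact that a fiber sphere can be isotoped into neither piece it does not follow that ``the ruling restricts to surjections on both $N$ and $P$,'' and even granting surjectivity of $H_1(N)\oplus H_1(P)\to H_1(X)\cong H_1(\Sigma_h)$, the resulting inequality $2h\le b_1(N)+b_1(P)$ is circular, since $b_1(N)$ is exactly what you are trying to bound. Likewise, ``the disjoint exceptional spheres \dots must cross $Y$, and bounding the latter against the fixed topology of $Y$ \dots bounds $k$'' is not a principle: there is no a priori bound on the number of disjoint surfaces meeting a fixed separating hypersurface, so the topology of $Y$ by itself bounds nothing. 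What the proof actually has to exploit in this case is that $PD[\omega]$ lies in a subspace of $H_2(X;\mathbb R)$ of dimension at most $b_2(P)$ on which $c_1(X)$ vanishes identically, combined with $b^+(X)=1$, the light-cone lemma, and the explicit description of exceptional classes and fiber classes in rational and ruled manifolds; this is precisely the content of the relevant lemmas of \cite{LMY} (of which Lemma~\ref{l:maximalClass} is the trace visible in this paper). As written, your proposal proves the first conclusion and the Calabi--Yau branch of the second, but not the theorem.
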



\subsection{Properties of $Kod(Y, \xi)$}

Recall that, for a contact 3-manifold $(Y, \xi)$,  we have defined $Kod(Y, \xi)=-\infty$ if it admits a uniruled cap, $Kod(Y, \xi)=0$ if it does not admit a uniruled cap
but admits a Calabi-Yau cap, and $Kod(Y, \xi)=1$ if it does not admit either a uniruled cap or a Calabi-Yau cap.
The followings are either contained in \cite{LMY} or immediate from the definition.

\begin{lemma}\label{l:KodcobIncrease}

The contact Kodaira dimension has the following properties.
\begin{itemize}
\item $Kod (Y, \xi)=-\infty$ if $(Y, \xi)$ is overtwisted.

\item If the positive end of a strong symplectic cobordism has $Kod=-\infty$, then so does the negative end.

\item If $(Y, \xi)$ is co-fillable, ie. it is a connected component of the boundary of a strong semi-filling with disconnected boundary, then $Kod(Y, \xi)\geq 0$.

\item If $c_1(Y, \xi)$ is not torsion, then $Kod(Y, \xi)\ne 0$.
\end{itemize}
\end{lemma}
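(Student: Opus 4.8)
The plan is to prove the four bullets of Lemma~\ref{l:KodcobIncrease} essentially by unwinding the definitions and invoking the results already cited.

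\medskip

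\noindent\textbf{Overview of the strategy.}
All four statements reduce to producing (or transporting) a uniruled cap or a Calabi--Yau cap, so the main tool is a \emph{gluing principle}: if $(W,\omega)$ is a strong symplectic cobordism from $(Y_-,\xi_-)$ to $(Y_+,\xi_+)$ and $(P_+,\omega_{P_+})$ is a cap of $(Y_+,\xi_+)$, then $W \cup_{Y_+} P_+$ is a cap of $(Y_-,\xi_-)$; one then has to check how $c_1$ and the cohomology class $[(\omega_P,\alpha_P)]$ behave under this gluing. First I would isolate this gluing statement (it is implicit in the conventions recalled in Section~\ref{s:Kodaira}, via inserting a piece of the symplectization) and record the effect on the relevant pairings.

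\medskip

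\noindent\textbf{The individual bullets.}
For the first bullet, if $(Y,\xi)$ is overtwisted then by Eliashberg's / Gromov's non-fillability together with the (by now standard) construction of a uniruled --- indeed rational --- cap for overtwisted contact structures, $(Y,\xi)$ admits a uniruled cap, whence $Kod(Y,\xi)=-\infty$; I would just cite \cite{LMY} for this, since the excerpt attributes such statements there. For the second bullet, let $(W,\omega)$ be a strong cobordism with convex end $(Y_+,\xi_+)$ of Kodaira dimension $-\infty$, so $(Y_+,\xi_+)$ has a uniruled cap $(P_+,\omega_{P_+},\alpha_{P_+})$ with $c_1(P_+)\cdot[(\omega_{P_+},\alpha_{P_+})]>0$; glue to get a cap $P_- = W\cup_{Y_+} P_+$ of $(Y_-,\xi_-)$, and check that the defining inequality persists --- here one may need to rescale the symplectization collar (the factor $\lambda_P$ in the conventions) and possibly deform in a $\beta$-direction as in the second bullet of the trichotomy after Definition~\ref{d:CY}, so that the positivity of the $c_1$-pairing survives on the larger cap. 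For the third bullet, if $(Y,\xi)$ is a boundary component of a strong semi-filling $(Z,\omega)$ with disconnected boundary, then capping off all the \emph{other} boundary components with Calabi--Yau caps (which exist, e.g. by the constructions in \cite{LMY}) produces a strong semi-filling with connected concave boundary $-(Y,\xi)$ --- equivalently a symplectic cap of $(Y,\xi)$; one must argue that this cap cannot be uniruled, which follows because a uniruled cap of $(Y,\xi)$ would, glued to the original filling coming from the remaining components, yield a closed uniruled symplectic $4$-manifold containing that filling, contradicting the constraints on semi-fillings (again referencing \cite{LMY}); hence $Kod(Y,\xi)\ge 0$. For the fourth bullet, if $Kod(Y,\xi)=0$ then $(Y,\xi)$ admits a Calabi--Yau cap $(P,\omega_P)$, i.e. $c_1(P)$ is torsion; but $c_1(P)|_{\partial P} = c_1(Y,\xi)$ (the contact structure is the restriction of the almost complex tangent distribution), so $c_1(Y,\xi)$ would be torsion as well, contradiction --- therefore $c_1(Y,\xi)$ non-torsion forces $Kod(Y,\xi)\ne 0$.

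\medskip

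\noindent\textbf{Expected main obstacle.}
The routine part is the bookkeeping of $c_1$ and of the pair $(\omega_P,\alpha_P)$ under gluing in bullets two and three; the genuinely delicate point is verifying that the positivity $c_1\cdot[(\omega_P,\alpha_P)]>0$ is preserved when one enlarges a uniruled cap by attaching a cobordism (bullet~2) or when one caps off auxiliary boundary components (bullet~3), since both $c_1$-term and the relative class change and a priori the pairing could drop to $\le 0$. I expect this is handled exactly as in the remark following Definition~\ref{d:CY}: after gluing one has $c_1(P_-)\cdot[(\omega_{P_-},\alpha_{P_-})]\ge 0$ (or can be arranged $\ge 0$ by choosing the collar scaling), and then a small deformation of $\omega_{P_-}$ in a suitable $\beta$-direction with $c_1\cdot[\beta]>0$ makes it strictly positive, giving a uniruled cap after deformation --- which is all that is needed, since $Kod$ only depends on the existence of such a cap up to deformation. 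So the proof will hinge on making that deformation argument precise in the relative setting.
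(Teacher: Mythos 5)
The paper's own proof of this lemma is a one-line citation: the first three bullets are Lemma 4.13, Lemma 4.6 and Corollary 4.8 of \cite{LMY}, and the fourth is declared immediate. So by reconstructing the arguments you are necessarily taking a more explicit route. Your fourth bullet matches the intended ``immediate'' argument exactly ($c_1(P)|_Y=c_1(\xi)$, so a Calabi--Yau cap forces $c_1(\xi)$ to be torsion), and your first bullet is itself a citation to \cite{LMY}, which is consistent with what the paper does.

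For the second bullet, the crux is precisely the point you flag but do not resolve: after forming $P_-=W\cup_{Y_+}P_+$ there is no a priori reason that $c_1(P_-)\cdot[(\omega,\alpha_-)]\ge 0$, since the cobordism contributes a term $c_1(W)\cdot[(\omega_W;\alpha_-,\alpha_+)]$ of arbitrary sign, and the $\beta$-deformation trick from the discussion after Definition \ref{d:CY} only rescues the case where the pairing is exactly zero --- a strictly negative pairing cannot in general be deformed away while keeping the form symplectic and the end concave (the paper itself only says this is ``still possible''). The mechanism that actually works is the one you mention only in passing: in the gluing convention the form on the piece below the cap is rescaled by $\lambda\to 0$, the symplectization neck contributes nothing to the pairing (its relative class vanishes because $t\alpha$ is a global primitive), so $c_1(P_-)\cdot[(\omega,\alpha_-)]=\lambda\, c_1(W)\cdot[(\omega_W;\alpha_-,\alpha_+)]+c_1(P_+)\cdot[(\omega_{P_+},\alpha_{P_+})]>0$ for $\lambda$ small; you should commit to that computation rather than hedging toward the deformation argument. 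For the third bullet there is a genuine error: capping off the \emph{other} convex boundary components of the semi-filling $Z$ produces a strong \emph{filling} of $(Y,\xi)$, not a cap --- all boundary components of a semi-filling are convex, so nothing concave appears. The correct argument is the contrapositive you sketch afterwards, made quantitative: choose caps of the other components with $b_2^+\ge 1$ (possible by \cite{EtHo02}) to obtain a filling $N$ of $(Y,\xi)$ with $b_2^+(N)\ge 1$, a property that survives blowing down to a minimal filling, whereas a uniruled cap would force every minimal filling to have $b_2^+=0$ by Theorem \ref{t:uniruled}; the phrase ``contradicting the constraints on semi-fillings'' needs to be replaced by this explicit $b_2^+$ count.
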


\begin{proof}
The first three bullets corresponds to Lemma 4.13, Lemma 4.6 and  Corollary 4.8 in \cite{LMY}.
The last bullet is immediate.


\end{proof}

Since the unit cotangent bundle  of surface with high genus is a maximal element with respect to symplectic cobordism (because it is co-fillable) and it has  $Kod =0$ (see \cite{LMY}), we generally do not have the inequality in the second bullet for a strong symplectic cobordism.

In light of the bullets $1$ and $2$, we ask

\begin{question} Suppose $(Y, \xi)$ is a contact 3-manifold.
\begin{itemize}
\item If $(Y, \xi)$ is not fillable,  do we have $Kod(Y, \xi)=-\infty$?

\item If  $(Y, \xi)$ is fillable, do we have  $Kod(Y, \xi)\geq \min_{(X, \omega)} \kappa^s(X, \omega)$, where $(X, \omega)$ is a closed symplectic 4-manifold
containing $(Y, \xi)$ as a separating hypersurface?

\end{itemize}

\end{question}

Regarding the first bullet, a related question that was asked by Wendl is whether all non-fillable contact manifold is cobordant to an overtwisted contact manifold.
The answer is negative since there are   non-fillable $3$-manifolds  with non-trivial contact invariant \cite{LiSt04}.
However, Wendl proved that any contact $3$ manifold with planar torsion is symplectic cobordant to an overtwisted contact manifold, and hence has negative Kodaira dimension \cite{Wen13}.
Therefore, our question is a weaker version of Wendl's question.

Regarding the second bullet, it is true when $Kod(Y, \xi) = -\infty$.
When $Kod (Y, \xi) =0$, there is a subtlety  if $(Y, \xi)$ is not exactly fillable.
 When $Kod (Y, \xi)=1$,
the proposed inequality says  that  there exists  a contact embedding  in  $(X, \omega)$ with $\kappa(X, \omega)$ at most  $1$.
Notice that  there are abundant  $(X, \omega)$ with $\kappa^s=1$  due to Gompf.
Moreover, since many $\kappa^s=1$ symplectic manifold have tori fibration, we speculate that whether any
fillable contact manifold can be embedded in a $\kappa=1$ symplectic manifold is related to the question of supporting
genus of the contact manifold.

We end this subsection with the following observation.

\begin{prop}
 If $(W,\omega_W,\alpha_W)$ is an exact cobordism with negative end $(Y_-,\xi_-)$ and positive end $(Y_+,\xi_+)$,
 then $$Kod(Y_-,\xi_-) \le Kod(Y_+,\xi_+).$$

\end{prop}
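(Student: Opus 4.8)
The plan is to split the argument according to the value of $Kod(Y_+,\xi_+)\in\{-\infty,0,1\}$. If $Kod(Y_+,\xi_+)=1$ there is nothing to prove. If $Kod(Y_+,\xi_+)=-\infty$, then $(Y_+,\xi_+)$ admits a uniruled cap, and since an exact symplectic cobordism is in particular a strong symplectic cobordism, the second bullet of Lemma \ref{l:KodcobIncrease} gives $Kod(Y_-,\xi_-)=-\infty$. So the only case carrying content is $Kod(Y_+,\xi_+)=0$, where we must exhibit a uniruled or a Calabi--Yau cap of $(Y_-,\xi_-)$.

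Assume $Kod(Y_+,\xi_+)=0$ and fix a Calabi--Yau cap $(P,\omega_P)$ of $(Y_+,\xi_+)$, so that $c_1(P)$ is torsion. Gluing $W$ to $P$ along $Y_+$ produces $P'=W\cup_{Y_+}P$; since $W$ is concave at $Y_-$ and convex at $Y_+$ while $P$ is concave at $Y_+$, this is a symplectic cap $(P',\omega_{P'},\alpha_{P'})$ of $(Y_-,\xi_-)$. By the trichotomy for concave symplectic pairs recalled above, it is enough to prove
\[
 c_1(P')\cdot[(\omega_{P'},\alpha_{P'})]=0 .
\]
Indeed, if this holds then $(P',\omega_{P'},\alpha_{P'})$ is a Calabi--Yau cap when $c_1(P')$ is torsion, and otherwise becomes a uniruled cap after a symplectic deformation that can be taken to fix a neighborhood of $\partial P'$, hence preserving the contact boundary $(Y_-,\xi_-)$; in either case $Kod(Y_-,\xi_-)\le 0$.

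The vanishing of the pairing is forced by the exactness of $W$. Let $W'\subset P'$ denote the image of $W$ together with the piece of symplectization of $(Y_+,\xi_+)$ inserted in the gluing. On $W'$ the form $\omega_{P'}$ is exact and, after the rescaling implicit in the gluing, admits a primitive whose restriction to $Y_-=\partial P'$ is the Liouville form $\alpha_{P'}$ --- this is exactly what it means for $W$ to be an exact cobordism. Hence the relative de Rham class $[(\omega_{P'},\alpha_{P'})]\in H^2(P',\partial P';\mathbb{R})$ restricts to $0$ in $H^2(W',\partial P';\mathbb{R})$, so by the exact sequence of the triple $\partial P'\subset W'\subset P'$ it is the image of some $\lambda\in H^2(P',W';\mathbb{R})$. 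Excising the interior of $W'$ identifies $H^{*}(P',W';\mathbb{R})$ with $H^{*}(P,\partial P;\mathbb{R})$ compatibly with the module structures over $H^{*}(P')$ and $H^{*}(P)$, under which $c_1(P')$ acts through $c_1(P')|_{P}=c_1(P)$. Therefore $c_1(P')\cdot[(\omega_{P'},\alpha_{P'})]$ is the image in $H^4(P',\partial P';\mathbb{R})$ of $c_1(P)\cup\bar\lambda\in H^4(P,\partial P;\mathbb{R})$, where $\bar\lambda$ corresponds to $\lambda$; and $c_1(P)\cup\bar\lambda=0$ because $c_1(P)$ is torsion, hence zero in $H^2(P;\mathbb{R})$.

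The step I expect to be most delicate is this last one: verifying that the global primitive of the exact cobordism $W$ can be arranged, after gluing and rescaling, to restrict to $\alpha_{P'}$ on $Y_-$ (so that the relative class really dies on $W'$), and that the excision isomorphism is compatible with the relative cup-product module structure for the orientation conventions on $\partial P\subset P$ and $\partial P'\subset P'$. The ambiguity of the lift $\lambda$ --- by the image of $H^1(W',\partial P';\mathbb{R})$ --- causes no trouble here precisely because $c_1(P)$ is rationally trivial, so no further identification of $\lambda$ is required.
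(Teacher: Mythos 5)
Your proposal is correct and follows essentially the same route as the paper: reduce to the case $Kod(Y_+,\xi_+)=0$ via the second bullet of Lemma \ref{l:KodcobIncrease}, glue a Calabi--Yau cap of $(Y_+,\xi_+)$ to $W$ to obtain a cap of $(Y_-,\xi_-)$, show $c_1\cdot[(\omega,\alpha)]=0$ using exactness of $W$ and torsion of $c_1(P)$, and conclude by the trichotomy after Definition \ref{d:CY}. The only difference is that you spell out the excision/long-exact-sequence computation of the vanishing pairing, which the paper delegates to a citation of \cite{LMY}.
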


\begin{proof}

We can  assume $Kod(Y_+,\xi_+)=0$ by the second bullet of  Lemma \ref{l:KodcobIncrease}.
 Let $(P,\omega_P)$ be a Calabi-Yau cap of $(Y_+,\xi_+)$.
 Then $(P \cup_{Y_+} W,\omega)$ is a symplectic cap of $(Y_-,\xi_-)$. Since   $(W,\omega_W,\alpha_W)$ is exact and  $(P,\omega_P)$ is Calabi-Yau, we have
  $c_1(P \cup_{Y_+} W) \cdot [(\omega,\alpha_W|_{Y_-})]=0$ (see section $2$ of \cite{LMY}).
 By the observation right after Definition \ref{d:CY}, the cap $P \cup_{Y_+} W$  is either uniruled (after a symplectic deformation) or Calabi-Yau and hence the inequality.

  \end{proof}

\begin{rmk}
Zhang introduced the topological Kodaira dimension for closed 3-manifolds $\kappa(Y)$ in \cite{Zh14} using Thurston's 8 geometries and Perelman's solution to  the geometrization conjecture. A major property is  that $\kappa(Y)\geq \kappa(Y')$ if there is a nonzero degree map from $Y$ to $Y'$.
However, it is not clear that how $\kappa(Y)$ and $Kod(Y, \xi)$ are related.
For example, $Kod(Y, \xi_{OT})=-\infty$ for any overtwisted contact structure $\xi_{OT}$ (Lemma \ref{l:KodcobIncrease}).
There are also Stein fillable $(Y, \xi)$ with  $\kappa(Y)=1$ and  $Kod(Y, \xi)=-\infty$ (Lemma \ref{l:sharp}).

One possible relation is that $\kappa(Y)\geq  \min_{\xi}Kod(Y, \xi)$. This is subtle when $\kappa(Y)=0$.
In the example below we show that there are also some torus bundle $Y$ such that $\kappa(Y)=0$ and $Kod(Y, \xi)=1$ for
some Stein fillable contact structure $\xi$.

\end{rmk}

\subsection{Contact Kodaira dimensions for some fibered manifolds}

We discuss the contact Kodaira dimension for various torus bundles over circle.
In particular, we provide Stein fillable torus bundles with $Kod(Y,\xi)=1$.

There are many torus bundles with $Kod(Y,\xi)=-\infty$.
A detailed study can be found in \cite{GoLi14}.
A common feature of these examples is that they can be realized as the contact boundary of a divisor cap with the symplectic divisor being a cycle of
symplectic spheres or a single symplectic torus.

To give torus bundles with $Kod(Y,\xi)=0,1$, we need to first review some basic facts about cusp singularities \cite{Lo81}.
Consider an isolated cusp normal surface singularity.
Its minimal resolution is a cycle of rational curves with negative definite self-intersection form $Q_{\Gamma}$.
We denote the cycle of rational curves as $D$ and regard it as a symplectic divisor.
The boundary of a regular neighborhood $N(D)$ of $D$ is a torus bundle over circle $Y$ \cite{Ne81}.
Moreover, $Y$ is equipped with the canonical contact structure $\xi$ as a link of complex isolated normal surface singularity.
This setup should be viewed as the dual of the one in \cite{GoLi14}.

$D$ is called {\it embeddable} if there is a smooth (complex) rational surface with an effective reduced anticanonical divisor whose dual graph is the same as that of $D$.
$D$ is called {\it smoothable} if the cusp singularity associated to $D$ is smoothable as a complex surface singularity.
When $D$ is smoothable, the smoothings are diffeomorphic to the complements of $\check{D}$ in $X$, where $\check{D}$ is the resolution divisor of the dual cusp singularity associated to $D$
and $X$ is a smooth rational surface so that $\check{D}$ can be embedded into.
In particular, smoothings of cusps have $b_2^+=1$.
We can now completely determine when a cusp is smoothable/embeddable thanks to Hirzebruch-Zagier algorithm which checks embeddability combinatorically, and proofs of Looijenga conjecture which relates smoothablility of a cusp with embeddability of the dual cusp \cite{GrHaKe11}, \cite{En15}.

\begin{lemma}\label{l:Kod0torusBundle}
When a cusp is embeddable and smoothable, then its link $(Y,\xi)$ has $Kod(Y,\xi)=0$.
\end{lemma}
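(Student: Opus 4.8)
The plan is to produce a Calabi-Yau cap of $(Y,\xi)$ directly from the hypotheses, so that $Kod(Y,\xi)\le 0$, and then to argue that no uniruled cap can exist, so that $Kod(Y,\xi)\ge 0$. For the upper bound, I would exploit the fact that $D$ is smoothable: since smoothings of the cusp associated to $D$ are diffeomorphic to the complements of $\check D$ in a rational surface $X$, where $\check D$ is the resolution divisor of the dual cusp, the smoothing itself furnishes an exact (indeed Stein) filling $(N,\omega_N)$ of $(Y,\xi)$, while the complement $P := X \setminus \nu(\check D)$ of a regular neighborhood of $\check D$ is a symplectic cap of $(Y,\xi)$ — here one uses that the anticanonical-type configuration $\check D$ is a cycle of rational curves in $X$ and that the concave neighborhood boundary of $\check D$ is again the torus bundle $Y$ with its canonical contact structure (this is where embeddability of the dual cusp, guaranteed by the Looijenga-type correspondence, is used to realize $X$). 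The point of passing to the dual cusp is precisely that $\check D$ is anticanonical: $[\check D]$ is Poincaré-dual to $-K_X$ inside $X$, so the relative canonical class of $P = X\setminus\nu(\check D)$, or equivalently $c_1(P)$ paired against $[(\omega_P,\alpha_P)]$, vanishes up to torsion. Concretely I would check that $c_1(P)$ is torsion: $H^2(P)$ receives $c_1(X)|_P = -[\check D]|_P$, which dies once we remove $\check D$ (its Poincaré dual is supported on the removed neighborhood), so $c_1(P)$ lies in the image of $H^2(P,\partial P) \to H^2(P)$ and is in fact a torsion class because $Y$ is a torus bundle with $b_1$ controlled and the relevant long exact sequence forces torsion. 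Hence $(P,\omega_P)$ is a Calabi-Yau cap and $Kod(Y,\xi)\le 0$.

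For the lower bound I need to rule out a uniruled cap. Suppose $(Q,\omega_Q,\alpha_Q)$ were a uniruled cap of $(Y,\xi)$. Gluing it to the Stein filling $N$ coming from the smoothing produces a closed symplectic $4$-manifold $Z = N\cup_Y Q$ with $\kappa^s(Z) = -\infty$ by the cap's uniruledness (this is the content of the uniruled-cap formalism from \cite{LMY} recalled before Theorem~\ref{t:uniruled}); in particular $Z$ is rational or ruled, so $b_2^+(Z)$ behaves accordingly and the canonical class of $Z$ has negative square or pairs negatively with $\omega$. On the other hand $N$ is a smoothing of a cusp, so $b_2^+(N)=1$, and I would derive a contradiction from the Mayer--Vietoris / Novikov-additivity computation of $b_2^+$, $\sigma$ and of the canonical class of $Z$: the contribution of the smoothing piece $N$ together with the uniruled constraint on $Q$ is incompatible — essentially because a cusp singularity is not rational (its geometric genus is positive), so its Milnor fiber carries cohomology that cannot be absorbed into a rational or ruled $Z$. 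More cleanly, one can invoke that $Kod(Y,\xi)=-\infty$ would force, by Theorem~\ref{t:uniruled}, that \emph{every} minimal symplectic filling has $b_2^+=0$; but the minimal model of the smoothing $N$ has $b_2^+ = 1$ (smoothings of cusps have $b_2^+=1$ and are minimal, being Stein with no exceptional spheres after blowing down to the minimal model, which keeps $b_2^+$ fixed), a contradiction. Therefore no uniruled cap exists and $Kod(Y,\xi)\ge 0$, giving $Kod(Y,\xi)=0$.

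The main obstacle I anticipate is the bookkeeping at the level of $c_1$ and the contact-geometric identification of the boundaries: one must carefully justify that removing a neighborhood of the anticanonical cycle $\check D$ from $X$ yields a genuine \emph{symplectic} cap whose Liouville boundary is contactomorphic to $(Y,\xi)$ — i.e.\ that the dual cusp's link with its canonical contact structure really is $(Y,\xi)$ and not merely the same smooth torus bundle — and that the resulting $c_1(P)$ is torsion rather than merely orthogonal to $[(\omega_P,\alpha_P)]$. The first is handled by the theory of toric/cusp resolutions and the Looijenga duality (\cite{GrHaKe11}, \cite{En15}), together with the standard fact that a negative-definite anticanonical cycle of rational curves has a concave neighborhood boundary equal to the cusp link with its Milnor-fillable contact structure; the second is a cohomology-exact-sequence computation using $b_1(Y)$ and the intersection form of $\check D$. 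A secondary subtlety is making sure the rescaled gluing in Theorem~\ref{t:uniruled}'s setup does not affect the Kodaira-dimension dichotomy, but this is exactly the deformation-invariance of $\kappa^s$ recorded in Section~\ref{ss:uniruledCY}, so it costs nothing.
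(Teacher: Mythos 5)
Your lower-bound argument (ruling out a uniruled cap) is essentially the paper's: smoothability gives a Stein filling with $b_2^+=1$, and Theorem \ref{t:uniruled} forces $b_2^+=0$ for every minimal filling whenever a uniruled cap exists, so $Kod(Y,\xi)\neq-\infty$. The ``cleaner'' version you give at the end is exactly what the paper does; the detour through Mayer--Vietoris, rationality of $Z$, and geometric genus is unnecessary.

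The upper bound, however, contains a genuine error. You try to manufacture the Calabi-Yau cap from smoothability via the dual cusp, setting $P:=X\setminus\nu(\check D)$. But that manifold is, by your own preceding sentence, the smoothing, i.e.\ the Stein \emph{filling} $N$ of $(Y,\xi)$; it cannot simultaneously serve as a cap of $(Y,\xi)$. As a symplectic piece of the rational surface $X$, the complement of $\nu(\check D)$ is concave along the link of the \emph{dual} cusp $(\check Y,\check\xi)$, and $\check Y$ is the orientation reversal of $Y$ --- which is precisely why the smoothing, after the identification $Y\cong-\check Y$, is a filling of $(Y,\xi)$ rather than a cap. (Relatedly, the neighborhood of the negative-definite cycle $\check D$ has convex, not concave, boundary.) The hypothesis you never actually use --- embeddability of $D$ \emph{itself} --- is exactly what produces the cap: $D$ sits as an effective reduced anticanonical cycle in a rational surface $X'$, and $P=X'\setminus N(D)$ is then a cap of $(Y,\xi)=(\partial N(D),\xi_{can})$ with $c_1(P)=c_1(X')|_P=PD[D]|_P=0$, hence Calabi-Yau. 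That smoothability alone cannot yield a Calabi-Yau cap is confirmed by the lemma immediately following this one in the paper: smoothable but non-embeddable cusps have $Kod(Y,\xi)=1$, i.e.\ no Calabi-Yau cap exists for them, so any argument deriving such a cap from smoothability alone must break down.
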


\begin{proof}
When a cusp is embeddable, say $D$ is embedded into $X$, then the complement $P$ of the regular neighborhood $N(D)$ of $D$ in $X$ is a Calabi-Yau cap of $(Y,\xi)$.
In this case, $Kod(Y,\xi) \le 0$.
Moreover, $D$ being smoothable implies that $(Y,\xi)$ admits a Stein filling (its smoothings) with $b_2^+=1$ so $(Y,\xi)$ cannot admit uniruled cap and $Kod(Y,\xi)=0$.
\end{proof}

One can use this explicit Calabi-Yau cap to study exact fillings of $(Y,\xi)$ by Theorem \ref{t:CY} but we do not pursue it here.
We remark that one can check explicitly there are many embeddable and smoothable $D$.
On the other hand, there are also many smoothable but not embeddable cusp singularities.

\begin{lemma}
When a cusp is not embeddable but smoothable, then its link $(Y,\xi)$ has $Kod(Y,\xi)=1$
\end{lemma}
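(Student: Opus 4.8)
The goal is to show that when a cusp singularity is smoothable but not embeddable, its link $(Y,\xi)$ has $Kod(Y,\xi)=1$; that is, $(Y,\xi)$ admits neither a uniruled cap nor a Calabi-Yau cap. The lower bound $Kod(Y,\xi)\ge 0$ is immediate: smoothability means $(Y,\xi)$ is Stein fillable with a filling of $b_2^+=1$ (its smoothing), so by the argument already used in Lemma \ref{l:Kod0torusBundle}, it cannot admit a uniruled cap --- a uniruled cap would force $b_2^+=0$ on every minimal filling by Theorem \ref{t:uniruled}, contradicting $b_2^+=1$. Hence it remains to rule out a Calabi-Yau cap, i.e.\ to prove $Kod(Y,\xi)\neq 0$.

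The key idea is to leverage the correspondence between Calabi-Yau caps of the link of a cusp and embeddings of the \emph{dual} cusp. If $(P,\omega_P)$ were a Calabi-Yau cap of $(Y,\xi)$, one would glue it to the smoothing $(N,\omega_N)$ to obtain a closed symplectic 4-manifold $X=N\cup_Y P$ with $c_1(P)$ torsion and $b_2^+(N)=1$, so $\kappa^s(X,\omega)\in\{-\infty,0\}$ by Theorem \ref{t:CY}. In the Calabi-Yau case $\kappa^s(X,\omega)=0$, and one expects $X$ to be (a blowup of) a torus bundle / $K3$ / Enriques-type symplectic CY; combined with the fact that $N$ is the smoothing of the cusp and hence the complement of the resolution divisor $\check D$ of the dual cusp inside a rational surface, this should produce a symplectic --- and then, via the classification of symplectic CY surfaces and the deformation/isotopy results for the divisor $\check D$, an honest holomorphic --- embedding of $\check D$, i.e.\ embeddability of the dual cusp. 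By the proof of Looijenga's conjecture (\cite{GrHaKe11}, \cite{En15}), embeddability of the dual cusp is equivalent to smoothability of the original cusp, which holds by hypothesis --- so no contradiction yet. One must instead run the argument the other way: a Calabi-Yau cap of $(Y,\xi)$ should be shown to furnish an embedding of the \emph{original} cusp divisor $D$ (not $\check D$), because $D$ already sits as a concave piece near $Y$ and a CY cap caps it off to a symplectic CY surface containing $D$ as an anticanonical cycle; by Looijenga duality this is equivalent to smoothability of the dual cusp. So the claim reduces to: \emph{the dual cusp of a smoothable-but-not-embeddable cusp is not smoothable}, equivalently \emph{a cusp that is not embeddable has non-embeddable dual}, which is exactly Looijenga duality again applied to $\check D$. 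Thus non-embeddability of $D$ implies non-smoothability of $\check D$, which should obstruct the existence of the CY cap.

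Concretely, the steps I would carry out are: (1) record $Kod(Y,\xi)\ge 0$ from smoothability plus Theorem \ref{t:uniruled}, as above; (2) suppose a Calabi-Yau cap $(P,\omega_P)$ exists and glue it to a small neighborhood $N(D)$ of the resolution cycle to form a closed symplectic 4-manifold $X$ with $\kappa^s(X,\omega)=0$ and containing the symplectic cycle $D$ as an anticanonical divisor (one uses that $c_1$ is torsion and $D$ is the boundary divisor, so $[D]$ is Lefschetz-dual to $-K$ up to torsion); (3) invoke the classification of symplectic Calabi-Yau surfaces (they are diffeomorphic to torus bundles over tori, $K3$, or Enriques surfaces, and in the relevant case deformation equivalent to Kähler ones) to promote this to a Kähler CY surface with an anticanonical cycle of rational curves of type $D$, hence a rational surface after contracting appropriately --- giving embeddability of $D$; (4) conclude: $D$ embeddable contradicts the hypothesis that the cusp is not embeddable. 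Hence no Calabi-Yau cap exists and $Kod(Y,\xi)=1$.

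\textbf{Main obstacle.} The delicate point is step (3): passing from a purely \emph{symplectic} Calabi-Yau cap to a genuine \emph{holomorphic/algebraic} embedding of the cusp cycle $D$. The notion of ``embeddable'' is defined complex-analytically (anticanonical divisor on a rational surface), whereas the cap is only symplectic, so one must know that a symplectic CY surface containing a prescribed anticanonical cycle of symplectic rational spheres can be deformed to a Kähler one carrying the same configuration holomorphically --- this uses the symplectic isotopy results for configurations of spheres of low genus together with the deformation-equivalence classification of symplectic CY surfaces, and requires care that the self-intersection data $Q_\Gamma$ is preserved. A secondary subtlety is that the smoothing $(N,\omega_N)$ of the cusp may have $b_1\neq 0$ or additional blowups, so one should either work with a minimal model or argue that blowing down symplectic $(-1)$-spheres away from $D$ does not affect the conclusion. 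Once these deformation-theoretic inputs are granted, the rest is the combinatorial Looijenga duality already cited in the excerpt.
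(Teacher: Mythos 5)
Your step (1) (ruling out a uniruled cap via the $b_2^+=1$ smoothing and Theorem \ref{t:uniruled}) matches the paper. But the heart of your argument goes down the wrong branch and leaves the decisive input unproved. When you glue a hypothetical Calabi--Yau cap $(P,\omega_P)$ to the resolution neighborhood $N(D)$, the resulting closed manifold $X=P\cup N(D)$ is \emph{never} a symplectic Calabi--Yau surface: since $c_1|_P$ is torsion and $c_1|_{N(D)}$ is Poincar\'e dual to the cycle $D$ itself (by adjunction), $c_1(X)$ is Poincar\'e dual to $[D]\neq 0$, so $c_1(X)$ is not torsion and the dichotomy of Theorem \ref{t:CY} forces $X$ to be a \emph{non-minimal uniruled} manifold. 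Your step (3), which analyzes the $\kappa^s=0$ case via the classification of symplectic Calabi--Yau surfaces (torus bundles, $K3$, Enriques) and tries to deform to a K\"ahler model, is therefore directed at a case that cannot occur, and in any event you yourself flag the symplectic-to-holomorphic promotion as an unestablished ``delicate point.''

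The missing idea is that, in the uniruled case, $(X,D)$ is a \emph{symplectic Looijenga pair}: $D$ is an anticanonical cycle of spheres in a symplectic rational manifold. The classification of such pairs in \cite{LM15} shows that symplectic embeddability of $D$ as an anticanonical divisor in a rational symplectic 4-manifold coincides with embeddability in the complex (K\"ahler) sense; since $D$ is not embeddable complex-analytically by hypothesis, this is the contradiction. Your detour through Looijenga duality and the dual cusp $\check D$ is not needed for this lemma (the duality with $\check D$ only enters in identifying the smoothing used in step (1)), and the chain of implications you sketch there (``non-embeddability of $D$ implies non-smoothability of $\check D$, which should obstruct the CY cap'') is not an argument --- it restates the desired conclusion without a mechanism. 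Without the \cite{LM15} input, the proof does not close.
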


\begin{proof}
As in the proof of Lemma \ref{l:Kod0torusBundle}, $D$ being smoothable implies that $Kod(Y,\xi)\neq -\infty$.
Suppose $(Y,\xi)$ admits a CY cap $(P,\omega_P)$, then $(X=P \cup N(D),\omega)$ is either a minimal Calabi-Yau manifold or a non-minimal uniruled manifold \cite{LMY}.
Since $c_1(D) \neq 0$, $X=P \cup N(D)$ is a non-minimal uniruled manifold.
Moreover, $c_1|_P$ is torsion and the Poincare dual of $c_1|_{N(D)}$ is represented by $D$ (this can be checked by adjunction)
implies that $D$ represents the Poincare dual of the first Chern class in $(X,\omega)$ and hence a symplectic Looijenga pair in the sense of \cite{LM15}.
By the classification result in \cite{LM15}, the embeddability of $D$ as a symplectic divisor in a symplectic rational manifold is the same as that in the complex (K\"ahler) case.
By the assumption that $D$ is not embeddable in the complex sense, we get a contradiction.
As a result,  $Kod(Y,\xi)=1$.
\end{proof}

The following lemma provides another illustrative family of examples of contact manifolds with different Kodaira dimensions.

\begin{lemma}\label{l:sharp}
Let $(Y_{g,n},\xi_{g,n})$ be the boundary of a neighborhood of a symplectic surface $D_{g,n}$ with genus $g$ and self-intersection number $n >0$ equipped with the canonical contact structure.
Then,
\begin{displaymath}
   Kod(Y_{g,n},\xi_{g,n}) = \left\{
     \begin{array}{llr}
       -\infty & n>2g-2\\
       0 & n=2g-2 \\
       1 & n < 2g-2
     \end{array}
   \right.
\end{displaymath}

\end{lemma}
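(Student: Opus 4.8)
Here is the plan I would follow.

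The idea is to read the upper bounds directly off the cap $N(D_{g,n})$ and to get the lower bounds by exhibiting symplectic fillings of $(Y_{g,n},\xi_{g,n})$ with controlled $b_2^+$. Set $P_0:=N(D_{g,n})$, the disk bundle of Euler number $n$ over $\Sigma_g$ with projection $\pi$; since $n>0$ this is a symplectic cap of $(Y_{g,n},\xi_{g,n})$. One checks that $H^2(P_0;\mathbb{Z})\cong H^2(\Sigma_g;\mathbb{Z})=\mathbb{Z}$ is generated by $\pi^{*}[\mathrm{pt}]$, that $c_1(P_0)=(2-2g+n)\,\pi^{*}[\mathrm{pt}]$, and that (because $n\neq 0$) the Lefschetz dual of $\tfrac{1}{2\pi}[(\omega_{P_0},\alpha_{P_0})]$ is a positive multiple of $[D_{g,n}]$, while by adjunction $\langle c_1(P_0),[D_{g,n}]\rangle=(2-2g)+n$. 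Hence $c_1(P_0)\cdot[(\omega_{P_0},\alpha_{P_0})]>0$ exactly when $n>2g-2$, in which case $P_0$ is a uniruled cap and $Kod(Y_{g,n},\xi_{g,n})=-\infty$; and $c_1(P_0)=0$ exactly when $n=2g-2$, in which case $P_0$ is a Calabi--Yau cap and $Kod(Y_{g,n},\xi_{g,n})\le 0$. (For $g\le 1$ one always has $n>2g-2$.)

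For the remaining inequalities assume $0<n\le 2g-2$, so $g\ge 2$. The key step is to realize $D_{g,n}$ as an embedded symplectic surface inside a \emph{minimal} symplectic $4$-manifold with large $b_2^+$. I would take the relatively minimal elliptic surface $E(m)$ with $m=2g-n\ge 2$, with a section $S$ ($S^2=-m$, genus $0$) and a fiber $F$: the class $S+gF$ has square $2g-m=n>0$, pairs positively with a K\"ahler form, and has adjunction genus $1+\tfrac12\big(n+(m-2)\big)=g$, so by sharpness of the adjunction bound in $E(m)$ it is represented by an embedded symplectic surface $D$ of genus $g$ and self-intersection $n$. Then $N':=E(m)\setminus N(D)$ is a strong symplectic filling of $(Y_{g,n},\xi_{g,n})$, and it is minimal because $E(m)$ is ($\kappa^s(E(m))\ge 0$ for $m\ge 2$). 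A Mayer--Vietoris/Lefschetz-duality computation (the class $[D]$ has positive square and $H_2(N')$ maps onto $[D]^{\perp}$) gives $b_2^+(N')=b_2^+(E(m))-1=2m-2=2(2g-n-1)$; thus $b_2^+(N')=2$ when $n=2g-2$ and $b_2^+(N')\ge 4$ when $n<2g-2$.

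Now I assemble the pieces. If $Kod(Y_{g,n},\xi_{g,n})=-\infty$, then by Theorem \ref{t:uniruled} every minimal filling has $b_2^+=0$, contradicting $b_2^+(N')\ge 2$; hence $Kod(Y_{g,n},\xi_{g,n})\ge 0$ for all $0<n\le 2g-2$, which together with the first paragraph gives $Kod(Y_{g,n},\xi_{g,n})=0$ when $n=2g-2$. Finally, for $0<n<2g-2$ suppose towards a contradiction that $(Y_{g,n},\xi_{g,n})$ admits a Calabi--Yau cap $P$; since it has no uniruled cap, $Kod(Y_{g,n},\xi_{g,n})=0$, so Theorem \ref{t:CY} (equivalently, the gluing dichotomy of \cite{LMY}) applies to the minimal filling $N'$ and forces $\kappa^s(N'\cup P)\le 0$. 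But a closed symplectic $4$-manifold with $\kappa^s\le 0$ is rational, ruled, or a blow-up of a symplectic Calabi--Yau surface, hence has $b_2^+\le 3$, while $b_2^+(N'\cup P)\ge b_2^+(N')\ge 4$ (the intersection form of $N'\cup P$ restricted to the image of $H_2(N')$ recovers the intrinsic form of $N'$ modulo its radical). This contradiction shows $Kod(Y_{g,n},\xi_{g,n})\neq 0$, so it equals $1$ for $0<n<2g-2$.

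The main obstacle is the realization step: producing $D_{g,n}$ as an embedded symplectic surface of \emph{exactly} genus $g$ inside a manifold whose $b_2^+$ is as large as needed, i.e.\ sharpness of the adjunction inequality, which is available for $E(m)$; alternatively one may work inside products $\Sigma_h\times\Sigma_k$ (and their blow-ups) where such a surface is visibly present. The two homological inequalities $b_2^+(E(m)\setminus N(D))=b_2^+(E(m))-1$ and $b_2^+(N'\cup P)\ge b_2^+(N')$ are routine, and everything else is input already available: the adjunction formula, Theorem \ref{t:uniruled}, Theorem \ref{t:CY}, the results of \cite{LMY}, and the bound $b_2^+\le 3$ for symplectic Calabi--Yau $4$-manifolds \cite{Li06s}.
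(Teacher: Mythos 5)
Your proof is correct, and for the two lower-bound assertions ($Kod\ge 0$ when $n=2g-2$ and $Kod=1$ when $n<2g-2$) it takes a genuinely different route from the paper. The upper bounds coincide: both you and the paper observe that the disk-bundle cap $P_{g,n}$ is uniruled when $n>2g-2$ and Calabi--Yau when $n=2g-2$. To rule out a uniruled cap for $n\le 2g-2$, the paper argues softly that $(Y_{g,n},\xi_{g,n})$ is co-fillable (it sits at the positive end of a cobordism from the co-fillable $(Y_{g,2g-2},\xi_{g,2g-2})$, obtained by blowing up along $D_{g,2g-2}$ and deleting a neighborhood of the resulting surface) and then invokes the third bullet of Lemma~\ref{l:KodcobIncrease}; to rule out a Calabi--Yau cap for $n<2g-2$ it asserts that $c_1(\xi_{g,n})$ is not torsion. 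You instead build an explicit minimal strong filling $N'=E(2g-n)\setminus N(D)$ with $b_2^+(N')=2(2g-n)-2$ and play it off against Theorem~\ref{t:uniruled} (which forces $b_2^+=0$ for minimal fillings when a uniruled cap exists) and against the Calabi--Yau gluing dichotomy of \cite{LMY} combined with the bound $b_2^+\le 3$ for closed symplectic $4$-manifolds of nonpositive Kodaira dimension. Your route costs an explicit construction but buys two things: it produces fillings with prescribed large $b_2^+$ (which also feeds the sharpness discussion after Corollary~\ref{c:explicitBounds}, where the paper itself uses surfaces in a $K3$), and it is arguably more robust on the no-Calabi--Yau-cap step, since $c_1(\xi_{g,n})$ is pulled back from the base and hence lies in the torsion subgroup $\mathbb{Z}/n$ of $H^2(Y_{g,n};\mathbb{Z})$ --- it is always torsion, so the paper's one-line justification does not apply as written, whereas your $b_2^+(N')\ge 4$ argument genuinely closes that case. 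Two points to tighten: justify the genus-$g$ symplectic representative of $S+gF$ by symplectically resolving the section together with $g$ parallel fibers (a tree configuration, so the resolved surface has genus exactly $g$), rather than by ``sharpness of adjunction,'' which a priori only yields a smooth representative; and note explicitly that you need the dichotomy $\kappa^{(P,\omega_P)}(N')\in\{-\infty,0\}$ for the non-exact strong filling $N'$ --- this is indeed the form in which \cite{LMY} prove it (and the form in which the paper applies it to cusp links), even though Theorem~\ref{t:CY} is phrased with exact fillings.
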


\begin{proof}
As a neighborhood of  $D_{g,n}$, $(P_{g,n},\omega_{P_{g,n}})$  is a symplectic cap of $(Y_{g,n},\xi_{g,n})$.
 When $n>2g-2$, $(P_{g,n},\omega_{P_{g,n}})$ is a uniruled cap and hence $Kod(Y_{g,n},\xi_{g,n})=-\infty$.
 When $n=2g-2$, $(P_{g,n},\omega_{P_{g,n}})$ is a Calabi-Yau cap.
 However, $(Y_{g,n},\xi_{g,n})$ does not admit a uniruled cap because it is co-fillable.
  When $n<2g-2$, it admits no Calabi-Yau cap because $c_1(\xi_{g,n})$ is not torsion.
 To show that it admits no uniruled cap, we consider the Calabi-Yau cap  $(P_{g,2g-2},\omega_{P_{g,2g-2}})$ with $D_{g,2g-2}$ inside.
 We can do $2g-2-n$ small symplectic blowups along $D_{g,2g-2}$ and result in a symplectic surface $D'$ of genus $g$ and self intersection number $n$.
 The neighborhood of $D'$ is symplectic deformation equivalent to $(P_{g,n},\omega_{P_{g,n}})$.
 By deleting the neighborhood of $D'$, we get a symplectic cobordism with negative end being $(Y_{g,2g-2},\xi_{g,2g-2})$ and positive end being $(Y_{g,n},\xi_{g,n})$.
 Since $(Y_{g,2g-2},\xi_{g,2g-2})$ is co-fillable, so is $(Y_{g,n},\xi_{g,n})$.
 Hence $Kod(Y_{g,n},\xi_{g,n})=1$.

\end{proof}


\section{Maximal  surface and the lower bound on $2\chi+3\sigma$}\label{s:maximal}

\subsection{Equivalence of minimality for convex 4-manifolds}

In this subsection, we make an observation on  symplectic exceptional spheres in convex 4-manifolds.

Let $(N,\omega_N)$ be a convex symplectic manifold.
It is called smoothly minimal if there is no smoothly embedded sphere of self-intersection $-1$.
It is called (symplectically) minimal if there is no symplectically embedded sphere of self-intersection $-1$.
For homological reason, any exact/Stein filling is minimal.
The following proposition shows that these two are in fact the same notion, similar to the situation for closed symplectic four manifolds.

\begin{prop} \label{p:minimality}
A convex symplectic 4-manifold   $(N,\omega_N)$ is symplectically minimal if and only if  it is smoothly minimal.
\end{prop}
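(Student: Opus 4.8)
The plan is to prove the two implications separately. The reverse one is immediate: a symplectically embedded sphere of self-intersection $-1$ is in particular a smoothly embedded one, so if $N$ is smoothly minimal then it is symplectically minimal. For the converse I will argue by contraposition: assuming $S\subset N$ is a smoothly embedded sphere with $S\cdot S=-1$, I produce a symplectically embedded $-1$-sphere in $N$.

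First I cap off. By \cite{EtHo02} the contact manifold $(Y,\xi)$ admits a symplectic cap $(P,\omega_P)$; gluing $N$ to $P$ along a symplectization collar $[0,2T]\times Y$ of large length $T$ yields a closed symplectic $4$-manifold $(X_T,\omega_T)=N\cup_Y\big([0,2T]\times Y\big)\cup_Y P$ with $\omega_T|_N=\lambda\omega_N$. The sphere $S$ is still a smoothly embedded $-1$-sphere in $X_T$, so $e:=[S]\in\mathcal E_{X_T}$ and $X_T$ is not smoothly minimal. By Taubes' Seiberg--Witten theory (\cite{Taubes96}, \cite{LiLiu95}) we then have $\mathcal E_{\omega_T}\neq\emptyset$, no class of $\mathcal E_{X_T}$ is $\omega_T$-orthogonal, and the classes of $\mathcal E_{X_T}$ with positive $\omega_T$-pairing are precisely those in $\mathcal E_{\omega_T}$; moreover each of those is cut out, for generic $\omega_T$-compatible $J$, by an embedded $J$-holomorphic sphere. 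After orienting $S$ so that $\omega_T\cdot e>0$ we thus have $e\in\mathcal E_{\omega_T}$.

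The key step is to localize a holomorphic representative of $e$ inside $N$. I would choose $J$ compatible with $\omega_T$, cylindrical along the collar $[0,2T]\times Y$ and generic on $N$ and on $P$, and take $C_T$ to be the embedded $J$-holomorphic sphere in class $e$. Its symplectic area equals $\omega_T\cdot e=\lambda\int_S\omega_N$, which is independent of $T$, whereas by the monotonicity lemma a closed $J$-holomorphic curve meeting every slice $\{t\}\times Y$, $t\in(0,2T)$, has area bounded below by a constant times $T$. Hence, for $T$ large, $C_T$ avoids some slice $\{t_0\}\times Y$, and being connected it lies in $N\cup\big([0,t_0)\times Y\big)\cong N$ or in $\big((t_0,2T]\times Y\big)\cup P\cong P$. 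The second case is impossible: there $C_T$ would be disjoint from $S\subset N$, forcing $-1=[S]\cdot[S]=[C_T]\cdot[S]=0$. So $C_T$ sits in the copy of $N$, where it is a symplectically embedded $-1$-sphere, contradicting the symplectic minimality of $N$. Note that no blow-down induction is needed because I track the specific class $[S]$: a $-1$-sphere in $P$ representing $[S]$ would be disjoint from $S$ and hence have trivial intersection with it, which is absurd.

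The part I expect to be delicate is the localization: one must verify that an almost complex structure constrained to be cylindrical along a long collar still realizes the Gromov invariant of the exceptional class $e$ by an \emph{embedded} sphere, so that transversality survives the constraint and there is no bubbling or multiple cover; and one must carry out the monotonicity/energy estimate on the neck carefully enough to rule out a sphere of $T$-independent area stretching across it. The capping and intersection-number bookkeeping around this step are routine. (Alternatively, granting Theorem \ref{t:universalMinimal}, one may cap $N$ with the universal minimal cap and conclude at once, but I prefer to keep the proof independent of the later sections.)
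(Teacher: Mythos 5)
Your easy direction and the overall skeleton (cap off, apply Taubes--Li--Liu to represent $\pm e$ symplectically in the closed-up manifold, then argue the representative must sit in $N$) match the paper, but your key step is genuinely different, and it is the one place where your argument is not complete. The paper never localizes a holomorphic curve geometrically: it arranges $b_2^+(P)>1$ via \cite{EtHo02} so that Taubes \cite{Taubes96} alone gives a symplectic $-1$ sphere $S$ in class $e$ or $-e$, and then quotes Lemma \ref{l:maximalClass} (Corollary 2.12 of \cite{LMY}), which says that a symplectic exceptional class of $N\cup_Y P$ admitting no embedded symplectic representative in $N$ must pair \emph{positively} with $PD[(\omega_P,\alpha_P)]$. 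Since that dual class is supported in $P$ while $e$ is supported in $N$, the pairing vanishes --- contradiction, so the class already has a symplectic representative inside $N$. This is a two-line homological argument once the lemma is invoked. You instead re-prove (a special case of) that lemma by hand via neck-stretching and monotonicity. Your intersection-number bookkeeping ruling out a representative lying in $P$ is correct and is the exact analogue of the paper's pairing contradiction; but the step you yourself flag is a real gap: a $J$ constrained to be cylindrical along a long collar is not generic, so a priori you only get a stable map in class $e$, possibly nodal with multiply covered components, and the monotonicity/energy estimate then has to be run component by component while tracking which component carries the nonzero intersection with $S$. (A smaller imprecision: $\omega_T\cdot e=\lambda\int_S\omega_N$ need not be independent of $T$, since the gluing rescales $\omega_N$ by a neck-length-dependent factor; it is however bounded above, which is all your estimate needs.) None of this is unfixable, but it is precisely the analytic content that Lemma \ref{l:maximalClass} packages, so citing that lemma closes your gap at once and renders the stretching unnecessary. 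What your route would buy, if completed, is independence from \cite{LMY} and an actual holomorphic sphere located inside $N$ rather than just the homological conclusion; what the paper's route buys is brevity and the avoidance of all transversality and compactness issues for constrained almost complex structures.
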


We first recall a result in \cite{LMY}.

\begin{lemma}[\cite{LMY}, Corollary 2.12]\label{l:maximalClass}
 Let $(P,\omega_P,\alpha_P)$ be a concave symplectic pair and $(N,\omega_N)$ a symplectic filling of $(Y,\xi)$.
 Then any symplectic exceptional class in $(N \cup_Y P,\omega)$ which admits no embedded symplectic representative in $(N,\omega_N)$ pairs positively with $PD[(\omega_P,\alpha_P)]$

 In particular, if $(N,\omega_N)$ is (symplectically) minimal, any symplectic exceptional class in $(N \cup_Y P,\omega)$ pairs positively with $PD[(\omega_P,\alpha_P)]$.
\end{lemma}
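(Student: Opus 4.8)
The plan is to represent the exceptional class $E$ by pseudoholomorphic spheres adapted to the decomposition $X = N \cup_Y P$ and to read off the pairing $E\cdot PD[(\omega_P,\alpha_P)]$ from a neck-stretching limit. Since $E$ is a symplectic exceptional class, its Taubes--Gromov invariant is nonzero \cite{Taubes96}, \cite{LiLiu95}, \cite{Li99}, so for every $\omega$-tame $J$ the class $E$ has a (possibly nodal) $J$-holomorphic representative, and for generic $J$ an embedded one. First I would fix a sequence $J_n$ of $\omega$-compatible almost complex structures that are cylindrical on a neck $Y\times[-n,n]$ around the gluing hypersurface $Y$ and generic elsewhere, and let $C_n$ be the embedded $J_n$-holomorphic sphere in class $E$.

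Next I would invoke neck-stretching (SFT) compactness to extract from $\{C_n\}$ a holomorphic building whose levels live in the completion $\hat N$ of the filling, in finitely many symplectization levels $\mathbb{R}\times Y$, and in the completion $\hat P$ of the cap; the total relative homology data of the building equals that of $E$. The key computation is that the pairing with the relative class $[(\omega_P,\alpha_P)]$ localizes to $\hat P$: the image of $(\omega_P,\alpha_P)$ in $H^2(X;\mathbb{R})$ restricts to $0$ on $N$, so the representing $2$-form may be taken exact on $\hat N$ and equal to the symplectic form on $\hat P$. By Stokes and positivity of the $\omega_P$-area of holomorphic curves, each nonconstant component $u_\alpha\subset\hat P$ contributes a strictly positive amount $\int u_\alpha^*\omega_P>0$, while the exact contributions from the $\hat N$ levels telescope against the Reeb-orbit actions of the symplectization levels (these actions are positive because $\alpha_P$ is a positive contact form). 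Hence the pairing reduces to $E\cdot PD[(\omega_P,\alpha_P)]=\sum_{u_\alpha\subset\hat P}\int u_\alpha^*\omega_P\ \ge 0$, with strict inequality exactly when some nonconstant component lands in $\hat P$.

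It then remains to rule out the possibility that every nonconstant component lies in $\hat N$. If that happened, the building would contain no curve in the cap and only trivial cylinders over Reeb orbits in the symplectization levels, so assembling the levels would produce an embedded symplectic sphere in $\hat N$, hence, after pushing it off the cylindrical end, an embedded symplectic representative of $E$ inside $(N,\omega_N)$ --- contradicting the hypothesis. Therefore at least one nonconstant component lies in $\hat P$ and $E\cdot PD[(\omega_P,\alpha_P)]>0$. The ``in particular'' clause is then immediate: if $(N,\omega_N)$ is symplectically minimal it contains no embedded symplectic $-1$-sphere, so no symplectic exceptional class of $X$ can have an embedded symplectic representative in $N$ (such a representative would be exactly such a sphere), and the displayed positivity applies to every symplectic exceptional class.

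The main obstacle is the assembly step above: turning a degenerate building with all nonconstant pieces in $\hat N$ back into an honest embedded symplectic sphere in $N$ in the class $E$. This requires controlling multiple covers and nodal breaking in the limit, checking that the asymptotic Reeb orbits match so the pieces glue to a single sphere rather than a curve of lower genus or a proper sub-class, and verifying that embeddedness and self-intersection $-1$ are preserved. Establishing the period localization rigorously --- that the $\hat N$ and symplectization contributions cancel and only the positive $\hat P$-areas survive --- is the other point requiring care, and is where the precise finite-energy and Stokes bookkeeping of \cite{LMY} is needed.
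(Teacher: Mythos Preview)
The paper does not give its own proof of this lemma: it is quoted verbatim as \cite{LMY}, Corollary 2.12, and used as a black box. So there is no in-paper argument to compare against; what you have written is essentially a reconstruction of the proof in \cite{LMY}, and your outline is the standard one (neck-stretching applied to the embedded $J$-holomorphic $(-1)$-sphere guaranteed by the nontriviality of the Gromov invariant of $E$).

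Your identification of the pairing $E\cdot PD[(\omega_P,\alpha_P)]$ with the total $\omega_P$-area of the cap levels is correct, and the dichotomy you set up is the right one. The one place where you make your life harder than necessary is the ``assembly'' step you flag as the main obstacle. You do not need to glue a nodal building back into an embedded sphere. If no nonconstant component of the limiting building lies in $\hat P$, then there are no Reeb asymptotics at the top of the symplectization levels to be capped, so in fact there are no symplectization levels and no punctures at all: the entire building is a closed (possibly nodal) curve contained in a compact subset of $\hat N$. But then, by the definition of convergence to a building, for all sufficiently large $n$ the embedded sphere $C_n$ itself already lies in that compact subset, hence in $N$ after identifying $\hat N$ with $N$ plus a collar. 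That is the embedded symplectic representative of $E$ in $(N,\omega_N)$ whose existence you wanted to contradict---no gluing, no control of multiple covers, and no reassembly is required. With this simplification your argument goes through, and the ``in particular'' clause follows exactly as you say.
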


\begin{proof}[Proof of Proposition \ref{p:minimality}]
Clearly $N$ is symplectically minimal if it is smoothly minimal.
To prove the converse, we will assume that $N$ is not smoothly minimal and show that it cannot be symlectically minimal.
Let $e\in H_2(N)$ be the class of a smoothly embedded $-1$ sphere in $N$.

We glue  $N$ along its contact boundary $(Y, \xi)$  with a concave 4-manifold  $(P, \omega_P)$  to obtain a closed symplectic 4-manifold $(X, \omega)$.
Further, we can assume that $b^+_2(P)>1$ (by \cite{EtHo02}).
Denote still by  $e$ the image of $e$ under the natural map $H_2(N) \to H_2(X)$.   Notice that  $b^+_2(X)>1$ since $b^+_2(P)>1$. By a result of Taubes \cite{Taubes96},
there is  a $\omega-$symplectic $-1$ sphere $S$   in the class $e$ or $-e$.


By Lemma \ref{l:maximalClass}, $S$ pairs positively with $PD[(\omega_P,\alpha_P)] \in H_2(X)$.
However, $PD[(\omega_P,\alpha_P)]$ comes from the natural map $H_2(P) \to H_2(X)$.
This contradicts to the fact that $e$ is represented by a smooth sphere in $N$.
\end{proof}

By the same argument, we can also obtain the following consequences.

\begin{corr}\label{c:disjointExceptional}
Let $(N,\omega_N)$ be a convex symplectic manifold.
If there is a smooth $-1$ sphere in $N$, there is a symplectic $-1$ sphere homologous to it up to sign.

Moreover, the   classes of symplectic $-1$ spheres are pairwise orthogonal.
\end{corr}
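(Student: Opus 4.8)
The plan is to establish Corollary~\ref{c:disjointExceptional} by imitating the proof of Proposition~\ref{p:minimality}, only now applied repeatedly to keep track of several exceptional classes at once. For the first statement, suppose $e \in H_2(N)$ is represented by a smooth $-1$ sphere. Glue $N$ with a concave cap $(P,\omega_P)$ so that $b_2^+(P) > 1$ (via \cite{EtHo02}), producing a closed symplectic $(X,\omega)$ with $b_2^+(X) > 1$. The image of $e$ in $H_2(X)$ is still represented by a smooth $-1$ sphere, and since $e$ pairs trivially with the image of $H_2(P)$ (because $e$ is supported in $N$, disjoint from the cap), Taubes' theory gives a symplectic $-1$ sphere $S$ in class $e$ or $-e$. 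Now I want to argue that $S$ can be taken to lie in $N$: by Lemma~\ref{l:maximalClass}, any symplectic exceptional class in $X$ not admitting an embedded symplectic representative in $N$ pairs \emph{positively} with $PD[(\omega_P,\alpha_P)]$, which comes from $H_2(P)$ and hence pairs trivially with $\pm e$. This contradiction forces $S$ (in class $e$ or $-e$) to have an embedded symplectic representative inside $N$, which is the claimed symplectic $-1$ sphere in $N$ homologous to $e$ up to sign.

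For the orthogonality statement, let $e_1, e_2 \in H_2(N)$ be classes of symplectic $-1$ spheres in $N$ (possibly after replacing the given smooth ones, by the first part). I would pass to the closed manifold $(X,\omega)$ as above. The key input is that in a closed symplectic $4$-manifold the set of symplectic exceptional classes can be simultaneously represented by \emph{disjoint} embedded symplectic $-1$ spheres whenever they are distinct — this is the standard fact underlying the existence of a (symplectic) minimal model, so distinct symplectic exceptional classes in $X$ are pairwise orthogonal. Thus it suffices to check that the images of $e_1, e_2$ in $H_2(X)$ remain distinct symplectic exceptional classes and that orthogonality in $H_2(X)$ pulls back to orthogonality in $H_2(N)$. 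The latter is automatic since the intersection form of $X$ restricts to that of $N$ on the image of $H_2(N)$. For the former, I need that $e_1 \ne \pm e_2$ in $H_2(X)$ implies $e_1 \ne \pm e_2$ in $H_2(N)$ — which is the contrapositive of functoriality — and I need that each $e_i$, being already a symplectic $-1$ sphere \emph{in} $N$, maps to a genuine symplectic exceptional class in $X$, which is clear since the symplectic sphere persists under the gluing.

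The one subtlety, and the main obstacle I anticipate, is the case $e_1 = \pm e_2$ at the level of $H_2(N)$ but with the spheres geometrically distinct: here orthogonality would read $e_1 \cdot e_1 = -1 \ne 0$, so the statement must be interpreted as applying to homologically distinct classes, and I should either state it that way or argue that two homologous symplectic $-1$ spheres in a convex manifold are in fact isotopic (which again follows from Taubes--Li--Liu uniqueness of exceptional representatives, at least after gluing to a closed manifold with $b_2^+ > 1$). A second minor point is ensuring Lemma~\ref{l:maximalClass} applies with the correct sign conventions: the class $-e$ also needs to be excluded from pairing positively with $PD[(\omega_P,\alpha_P)]$, but since that pairing is zero for both $\pm e$, the dichotomy in Lemma~\ref{l:maximalClass} ("pairs positively, or is represented in $N$") immediately yields the representative in $N$ in either case. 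So the bulk of the work is bookkeeping about the maps $H_2(N) \to H_2(X)$ and $H_2(P) \to H_2(X)$, together with citing the closed-manifold fact that distinct symplectic exceptional classes are orthogonal; no new hard analysis is needed beyond what Proposition~\ref{p:minimality} already used.
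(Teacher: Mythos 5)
Your proposal is correct and follows essentially the paper's own route: the paper derives Corollary~\ref{c:disjointExceptional} ``by the same argument'' as Proposition~\ref{p:minimality}, namely gluing a cap with $b_2^+(P)>1$, invoking Taubes to get a symplectic representative of $\pm e$, and using Lemma~\ref{l:maximalClass} to force that representative into $N$. One caveat on your second paragraph: the ``standard fact'' that distinct symplectic exceptional classes in a closed symplectic $4$-manifold are pairwise orthogonal is \emph{false} in general (e.g.\ $E_1$ and $H-E_1-E_2$ in $\mathbb{CP}^2\#2\overline{\mathbb{CP}^2}$, or $E_i$ and $F-E_i$ in a blown-up ruled surface), so you must explicitly use that $b_2^+(X)>1$ makes $X$ non-uniruled, which is exactly where the Taubes--Li--Liu orthogonality result applies.
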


It turns out that this corollary will be essential when we study self exact cobordism in Section \ref{s:ExactCob}.
A natural  question is whether the corresponding result of Proposition \ref{p:minimality} is true for concave symplectic 4-manifolds.
We remark that removing a ball in a rational 4-manifold with more than two blow-ups
gives a counterexample of the corresponding result of Corollary \ref{c:disjointExceptional} for concave symplectic 4-manifolds.

\subsection{Bounding exceptional curves by a maximal surface}\label{ss:boundE}

We now discuss how to use a  maximal symplectic surface (see Definition \ref{d:maximalSurface}) to bound the number of  exceptional curves in a closed symplectic four manifold.

\begin{lemma}\label{l:non-uniruled_maximal}
Let $(X,\omega)$ be a non-uniruled closed symplectic four manifold.
If $D$ is maximal, then $c_1(X,\omega)^2 \ge c_1(X,\omega) \cdot D$.
In particular, the number of exceptional spheres is bounded above by $K_{\omega} \cdot D$.
\end{lemma}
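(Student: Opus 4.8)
The plan is to exploit the structure theory of non-uniruled symplectic four manifolds via their minimal models. Write $(Z,\tau)$ for the minimal model of $(X,\omega)$, so that $(X,\omega)$ is obtained from $(Z,\tau)$ by blowing up at $k$ points, producing exceptional spheres $E_1,\dots,E_k$ whose classes are pairwise orthogonal and satisfy $E_i^2=-1$, $K_\omega\cdot E_i=1$. Since $(X,\omega)$ is non-uniruled, $\kappa^s\ge 0$, so the minimal model satisfies $K_\tau\cdot K_\tau\ge 0$ (this is exactly the content of the case division in the definition of $\kappa^s$ on a minimal manifold: $\kappa^s\ge 0$ forces $K_\tau^2\ge 0$). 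Under blow-up, $K_\omega=\pi^*K_\tau+\sum_{i=1}^k E_i$, hence $c_1(X,\omega)^2=K_\omega^2=K_\tau^2-k\ge -k$; more usefully, $K_\omega^2 = K_\tau^2 - k$ and $K_\tau^2 \ge 0$ gives $K_\omega^2 \ge -k$, but I actually want to compare with $K_\omega\cdot D$, so I will keep $K_\tau^2\ge 0$ in reserve and instead track the exceptional contributions directly.

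First I would use maximality of $D$: by Definition \ref{d:maximalSurface}, each symplectic exceptional class pairs positively with $[D]$, so $E_i\cdot D\ge 1$ for every $i$ (the intersection number is a nonnegative integer, and positivity makes it $\ge 1$). Therefore $D\cdot\sum_i E_i = \sum_i E_i\cdot D \ge k$. Next, write $D' := [D]-\sum_i (E_i\cdot D)E_i$ for the "pushforward" class to the minimal model $Z$; since $D$ is a symplectic surface and the $E_i$ are exceptional, $D'$ is represented (after blowing down) by a symplectic surface in $(Z,\tau)$, so by the adjunction/light-cone properties of minimal non-uniruled symplectic $4$-manifolds one has $K_\tau\cdot D' \ge 0$ — indeed in a minimal manifold with $\kappa^s\ge 0$ the canonical class pairs nonnegatively with every class represented by a symplectic surface (this follows from $K_\tau\cdot[\tau]\ge 0$ together with the light-cone lemma, or directly from \cite{Li06s}). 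Then compute
\begin{equation*}
K_\omega\cdot D = \pi^*K_\tau\cdot D + \Big(\sum_i E_i\Big)\cdot D = K_\tau\cdot D' + \sum_i (E_i\cdot D)(E_i\cdot E_i)\cdot(-1)\cdots
\end{equation*}
— more cleanly, $K_\omega\cdot[D]=K_\tau\cdot D' + \sum_i(E_i\cdot D)$, since $K_\omega\cdot E_i=1$ and the cross terms with $\pi^*K_\tau$ vanish. Hence $K_\omega\cdot D = K_\tau\cdot D' + \sum_i (E_i\cdot D) \ge 0 + k = k$, so $k\le K_\omega\cdot D$, which is the "number of exceptional spheres $\le K_\omega\cdot D$" claim. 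For the displayed inequality $c_1(X,\omega)^2\ge c_1(X,\omega)\cdot D$: we have $c_1^2=K_\omega^2=K_\tau^2-k$ and $c_1\cdot D = K_\omega\cdot D = K_\tau\cdot D'+\sum_i(E_i\cdot D)$, so it suffices to show $K_\tau^2-k\ge K_\tau\cdot D'+\sum_i(E_i\cdot D)$. Since $\sum_i(E_i\cdot D)\ge k$ is the wrong direction, I instead need $K_\tau^2\ge K_\tau\cdot D' + \big(\sum_i(E_i\cdot D)\big)+k$; hmm — so the cleanest route is to first handle the minimal case and reduce to it.

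In the minimal case $k=0$, $X=Z$, $D=D'$, and the claim is $K_\omega^2\ge K_\omega\cdot D$ with $K_\omega^2\ge 0$ and $K_\omega\cdot D\ge 0$. If $\kappa^s=0$ then $K_\omega^2=0$ and $K_\omega\cdot[\omega]=0$; since $[D]$ pairs positively with $[\omega]$ and $K_\omega^2=0$, the light cone lemma forces $K_\omega\cdot D \le 0$, combined with $K_\omega\cdot D\ge 0$ gives $K_\omega\cdot D=0=K_\omega^2$, done. If $\kappa^s=1$ then $K_\omega^2=0$ but $K_\omega\cdot[\omega]>0$; here one must use maximality more seriously — this is the step I expect to be the main obstacle. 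The point is that for $\kappa^s=1$ minimal manifolds $K_\omega$ is (a positive multiple of) the class of the fiber direction of a Lefschetz/elliptic-type structure and lies on the boundary of the positive cone, so $K_\omega\cdot D\ge 0$ together with $K_\omega^2=0$ and the light-cone lemma again forces $K_\omega\cdot D=0$ unless $D$ has a component in the $K_\omega$-direction, in which case a finer argument is needed. If $\kappa^s=2$, $K_\omega^2>0$ and $K_\omega$ is in the forward cone; the light cone inequality $(K_\omega\cdot[D])^2\ge K_\omega^2\, [D]^2$ doesn't immediately give it, so I would instead argue that $D$ maximal forces $[D]$ close enough to $[\omega]$-positive that $K_\omega^2 - K_\omega\cdot D\ge 0$ via $K_\omega^2\ge K_\omega\cdot[\omega]$-type estimates. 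For the non-minimal reduction: having established $K_\tau\cdot D'\le K_\tau^2$ (the minimal-case inequality applied in $Z$, noting $D'$ is maximal in $Z$ because pushing down preserves the positivity pairing with exceptional classes), we get $K_\omega^2 = K_\tau^2 - k \ge K_\tau\cdot D' + (\text{correction})$; I would then need to absorb $\sum_i(E_i\cdot D) - k\ge 0$ against a matching surplus, so the honest statement is that the inequality $c_1^2\ge c_1\cdot D$ is \emph{tight} precisely when each $E_i\cdot D=1$ and $K_\tau\cdot D'=K_\tau^2$, and the general inequality follows by combining the minimal case with $E_i\cdot D\ge 1$ — I would spell this bookkeeping out carefully in the actual proof. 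The main obstacle, then, is the $\kappa^s\in\{1,2\}$ minimal cases, where one needs the precise interaction between maximality of $D$, the position of $K_\omega$ in the cone, and the light-cone lemma of \cite{LiLiu95}; everything else is the blow-up formula and integrality of intersection numbers.
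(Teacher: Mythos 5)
There is a genuine gap, and it is caused by a sign error at the decisive step. You write ``$c_1\cdot D = K_\omega\cdot D$'', but $K_\omega$ is the \emph{canonical} class, i.e.\ $K_\omega=-c_1(X,\omega)$, so $c_1(X,\omega)\cdot D=-K_\omega\cdot D$ (while $c_1^2=K_\omega^2$ is unaffected). Because of this you set out to prove $K_\omega^2\ge K_\omega\cdot D$, which is not only the wrong statement but is false in general: for a minimal surface of general type with $D$ a Donaldson hypersurface dual to $m[\omega]/2\pi$, $K_\omega\cdot D$ grows linearly in $m$ while $K_\omega^2$ is fixed. This is why your $\kappa^s\in\{1,2\}$ cases refuse to close and you are driven to light-cone arguments that never terminate; as written, the displayed inequality of the lemma is not proved.

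The irony is that you have already assembled everything needed. You correctly establish (i) $c_1^2=K_\tau^2-k\ge -k$ from minimality and non-uniruledness of the minimal model, and (ii) $K_\omega\cdot D=K_\tau\cdot D'+\sum_i(E_i\cdot D)\ge 0+k=k$ from maximality ($E_i\cdot D\ge1$) plus $K_\tau\cdot D'\ge0$. With the correct sign these combine in one line: $c_1^2\ge -k\ge -K_\omega\cdot D=c_1\cdot D$. This is exactly the paper's proof, which phrases (ii) as $L\le\sum e_i\cdot D=(K_\omega-K_{\widetilde\omega})\cdot D\le K_\omega\cdot D$. Two smaller points: the paper justifies $K_{\widetilde\omega}\cdot D\ge0$ without blowing $D$ down --- $K_{\widetilde\omega}=K_\omega-\sum e_i$ is a Gromov--Taubes class in $X$ itself, hence represented by a $J$-holomorphic curve for a compatible $J$ making $D$ holomorphic, and positivity of intersections (using $D\cdot D>0$) gives the claim; your route through a pushed-down surface $D'$ requires an actual symplectic blow-down of $D$ and also contains its own sign slip (the class orthogonal to the $E_i$ is $[D]+\sum_i(E_i\cdot D)E_i$, not $[D]-\sum_i(E_i\cdot D)E_i$, since $E_i^2=-1$). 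Fix the sign of $c_1\cdot D$ and delete the entire minimal-case analysis, and your argument becomes the paper's.
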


\begin{proof}
 By Taubes, any  minimal model $(\widetilde X, \widetilde \omega)$  of $(X, \omega)$ has $c_1\cdot c_1\geq 0$ and all the exceptional classes are pairwise orthogonal.
 Let $L$ be the number of exceptional classes. Then
 $$c_1(X, \omega)^2=c_1(\widetilde X, \widetilde \omega)^2-L\geq -L.$$

 Let $e_1, ..., e_L$ be the exceptional classes. Then by Taubes,
 $K_{\widetilde \omega}=K_{\omega}-\sum e_i$ is a GT class and hence  represented by a $J-$holomorphic curve for any
 $\widetilde \omega-$compatible $J$.  We pick such a $J$ so that $D$ is $J-$holomorphic.
 Since $D$ has positive self-intersection, by positivity of intersection, we have
 $D\cdot K_{\widetilde \omega}\geq 0$.
 Therefore we have
 $$L\leq (\sum e_i \cdot D)=(K_{\omega}-K_{\widetilde \omega})\cdot D\leq K_{\omega}\cdot D=2g(D)-2-D\cdot D,$$

\end{proof}

\begin{lemma}\label{l:uniruled_maximal}
Let $(X,\omega)$ be a uniruled closed symplectic four manifold.
If $D$ is maximal surface of genus $g >0$, then
$$c_1(X,\omega)^2 \ge c_1(X) \cdot D +2-2g$$
if $D$ is not a section
$$c_1(X,\omega)^2=8-8g$$
if $D$ is a section.
\end{lemma}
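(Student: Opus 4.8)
The plan is to mimic the proof of Lemma~\ref{l:non-uniruled_maximal}, using the structure theory of uniruled symplectic four-manifolds together with positivity of intersections. First I would reduce to a minimal model. Since $(X,\omega)$ is uniruled, its minimal model $(\widetilde X,\widetilde\omega)$ is either $\mathbb{CP}^2$ or an $S^2$-bundle over a surface $B$; in particular $c_1(\widetilde X,\widetilde\omega)^2 = 8-8g(B)$ (with the convention $g(B)=0$ for $\mathbb{CP}^2$). As in the previous lemma, if $e_1,\dots,e_L$ are the pairwise-orthogonal exceptional classes exhibiting $X$ as a blow-up of $\widetilde X$, then $c_1(X,\omega)^2 = c_1(\widetilde X,\widetilde\omega)^2 - L$, and $K_{\widetilde\omega} = K_\omega - \sum e_i$. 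The key difference from the non-uniruled case is that $K_{\widetilde\omega}$ is \emph{not} represented by a pseudoholomorphic curve; instead one should use the fiber class.

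The main step is to bound $L = \sum_i (e_i\cdot D)$ from above using $D\cdot D$, $g(D)$, and $c_1(X)\cdot D$. Choose an $\widetilde\omega$-compatible $J$ making the (blown-down) ruling pseudoholomorphic, pulled back to make $D$ pseudoholomorphic after a further perturbation supported away from $D$ — more carefully, pick $J$ on $X$ so that $D$ is $J$-holomorphic and the strict transforms of generic fibers $F$ are $J$-holomorphic spheres. Since $D$ has positive self-intersection it cannot be a fiber component, so for the case ``$D$ is not a section'' positivity of intersection gives $D\cdot F \geq 1$; but I actually want to compare $D$ against $K_{\widetilde\omega}$, so the cleaner route is: on the minimal model, $-K_{\widetilde\omega}$ is represented by an effective divisor (twice a section plus vertical terms in the ruled case, or a cubic in $\mathbb{CP}^2$), so one can run the inequality $L \le \sum e_i \cdot D = (K_\omega - K_{\widetilde\omega})\cdot D$ and bound $-K_{\widetilde\omega}\cdot D$ by intersecting $D$ with this effective anticanonical representative, using maximality of $D$ (which forces $D$ to meet every exceptional class non-negatively, in fact the hypothesis gives positivity, so each $e_i\cdot D\ge 1$) and positivity of intersection with the genuinely present curve components. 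Plugging $K_\omega\cdot D = 2g-2-D\cdot D$ and $c_1(X)\cdot D = -K_\omega\cdot D$ then yields $c_1(X,\omega)^2 = 8-8g(B) - L \ge c_1(X)\cdot D + 2 - 2g$ after bounding $g(B)\le g$ via the adjunction/projection of $D$ to $B$.

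For the section case, the point is rigidity: if $D$ is a section of the ruling, then $D\cdot F = 1$ exactly, $g(D) = g(B)$, and $D$ being maximal forces $L = 0$ — otherwise some $e_i$ with $e_i\cdot D \ge 1$ would be an exceptional sphere, but a section of self-intersection... here one uses that a symplectic section in a minimal ruled surface together with a maximal condition leaves no room for blow-ups meeting $D$, and blow-ups not meeting $D$ would contradict maximality only if... Actually the cleanest argument: $D$ a section means $[D]$ pairs with the fiber as $1$; maximality says every exceptional class pairs positively with $[D]$; but an exceptional sphere $E$ in a blow-up of a ruled surface satisfies $E\cdot F = 0$ for the fiber class (since $E$ comes from a point blow-up, or is a fiber component), and one derives $E\cdot[D]=0$, contradicting maximality unless $L=0$. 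Hence $X$ is already minimal and $c_1(X,\omega)^2 = 8-8g(B) = 8-8g$.

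The hard part will be the section case bookkeeping — pinning down exactly why a maximal section in a (possibly blown-up) ruled surface forces minimality, i.e. ruling out exceptional spheres that could still pair positively with $[D]$; this requires carefully tracking how exceptional classes intersect the fiber class and the section class under blow-up, and invoking the classification of symplectic ruled surfaces (McDuff) to know the fiber class is unique and primitive. The non-section inequality is more routine once the effective anticanonical representative on the minimal model is in hand, but one must be careful that the chosen $J$ can simultaneously make $D$ and the ruling holomorphic, which may need a Gromov-compactness / genericity argument rather than a single explicit $J$.
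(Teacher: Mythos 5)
Your route is genuinely different from the paper's, but it has two real gaps. The paper's proof is essentially a citation: for a maximal surface of positive genus in a uniruled $4$-manifold that is not a section, Li--Zhang \cite{LiZh11} prove $(-c_1(X)+[D])^2\geq 0$, and expanding this square together with adjunction ($c_1(X)\cdot D = D\cdot D+2-2g$) is \emph{literally equivalent} to the stated inequality $c_1(X,\omega)^2\geq c_1(X)\cdot D+2-2g$; the section case is disposed of by noting a section is maximal only when $X$ is already minimal, hence a sphere bundle with $c_1^2=8-8g$. You instead try to rerun the minimal-model argument of Lemma \ref{l:non-uniruled_maximal}. The central step there was $K_{\widetilde\omega}\cdot D\geq 0$, obtained from positivity of intersections because $K_{\widetilde\omega}$ is effective. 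In the uniruled case you correctly note that $-K_{\widetilde\omega}$ is the effective class, but then positivity of intersections gives $-K_{\widetilde\omega}\cdot D\geq 0$, i.e.\ a \emph{lower} bound on $c_1(\widetilde X)\cdot D$, whereas your chain $L\leq (K_\omega-K_{\widetilde\omega})\cdot D = K_\omega\cdot D + (-K_{\widetilde\omega})\cdot D$ needs an \emph{upper} bound on $-K_{\widetilde\omega}\cdot D$. The inequality you would have to establish (roughly $c_1(\widetilde X)\cdot D\leq 6-8g(B)+2g$) is a nontrivial statement about how $D$ sits relative to the ruling --- it is true, e.g.\ on $S^2\times S^2$ it reduces to $(a-2)(b-2)\geq 0$ for a bidegree-$(a,b)$ surface, which holds only because $g>0$ forces $a,b\geq 2$ --- but nothing in your sketch produces it; this is exactly the content that \cite{LiZh11} supplies.

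The section case also has a non sequitur: from $E\cdot F=0$ and $D\cdot F=1$ you cannot conclude $E\cdot D=0$. On a blow-up of an irrational ruled surface the exceptional classes are $E_i$ and $F-E_i$; a point-blow-up class $E_i$ meets a section positively whenever the blown-up point lies on it, so such classes do not violate maximality. The class that does the work is $F-E_i$: the proper transform $D-E_i$ of a section satisfies $(D-E_i)\cdot(F-E_i)=0$, and an untouched section satisfies $D\cdot E_i=0$; either way maximality fails, which is the correct reason a maximal section forces minimality. You flagged both of these points as the ``hard part,'' and indeed they are where the proof actually lives; as written the proposal does not close either one.
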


\begin{proof}
Since $D$ is not a sphere, $D$ is a maximal surface in the sense of \cite{LiZh11} and hence   satisfies
$(-c_1(X)+[D])^2\geq 0$ when $D$ is not a section.
By adjunction, we have $c_1(X) \cdot D=D \cdot D +2-2g$.
Hence $c_1(X,\omega)^2 \ge c_1(X) \cdot D +2-2g$.

When $D$ is a section, it is maximal only when $(X,\omega)$ is minimal.
In this case, $(X,\omega)$ is a sphere bundle over a genus $g$ surface and hence  $c_1(X,\omega)^2=(2\chi+3\sigma)(X)=8-8g$.

\end{proof}

\begin{corr}\label{c:explicitBounds}
 Let $D$ be a maximal symplectic surface with genus $g>0$ in a concave symplectic manifold $(P,\omega)$.
 There is  a lower bound   on    $(2\chi+3\sigma)(N)$  of any  minimal strong symplectic filling $N$ of $Y=\partial P$ given by
$$(2\chi+3\sigma)(N) \ge \min\{c_1(P) \cdot D +2-2g,8-8g\}-(2\chi+3\sigma)(P)$$
Moreover, if  $\kappa^{(P,\omega_P)}(N,\omega_N) \ge 0$, we have
         $$(2\chi+3\sigma)(N) \ge c_1(P) \cdot D - (2\chi+3\sigma)(P)$$
\end{corr}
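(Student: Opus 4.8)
The plan is to deduce Corollary \ref{c:explicitBounds} directly from Lemmas \ref{l:non-uniruled_maximal} and \ref{l:uniruled_maximal} by gluing. First I would glue the minimal strong symplectic filling $(N,\omega_N)$ to the cap $(P,\omega_P)$ along $Y = \partial P$ to obtain a closed symplectic $4$-manifold $(X = N \cup_Y P, \omega)$. The key point, supplied by Lemma \ref{l:maximalClass} (Corollary 2.12 of \cite{LMY}), is that since $(N,\omega_N)$ is minimal, every symplectic exceptional class in $(X,\omega)$ pairs positively with $PD[(\omega_P,\alpha_P)]$; because $D$ is Lefschetz dual (up to a positive multiple) to $[(\omega_P,\alpha_P)]$ — or, more precisely, because $D$ is the chosen maximal surface sitting inside $P$ — this says exactly that $[D]$ pairs positively with every symplectic exceptional class, i.e. $D$ is \textbf{maximal} in $(X,\omega)$ in the sense of Definition \ref{d:maximalSurface}. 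So the hypotheses of Lemmas \ref{l:non-uniruled_maximal} and \ref{l:uniruled_maximal} are met in the closed manifold $X$.

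Next I would run the dichotomy on $\kappa^s(X,\omega) = \kappa^{(P,\omega_P)}(N,\omega_N)$. If $(X,\omega)$ is non-uniruled, Lemma \ref{l:non-uniruled_maximal} gives $c_1(X,\omega)^2 \ge c_1(X,\omega)\cdot [D]$. If $(X,\omega)$ is uniruled, then since $g(D) > 0$ the surface $D$ is not a sphere, and Lemma \ref{l:uniruled_maximal} gives $c_1(X,\omega)^2 \ge c_1(X)\cdot [D] + 2 - 2g$ when $D$ is not a section and $c_1(X,\omega)^2 = 8 - 8g$ when $D$ is a section. Taking the minimum over the uniruled cases and noting $c_1(X,\omega)\cdot[D] \ge c_1(X)\cdot[D] + 2 - 2g$ trivially (in fact with equality only relevant to the non-uniruled comparison), I get in all cases
$$c_1(X,\omega)^2 \ge \min\{c_1(X)\cdot[D] + 2 - 2g,\ 8 - 8g\}.$$
Here I must be slightly careful: in the non-uniruled case the bound is $c_1(X)\cdot[D]$, which is $\ge c_1(X)\cdot[D] + 2 - 2g$ precisely when $g \ge 1$, so the common lower bound $\min\{c_1(X)\cdot[D] + 2 - 2g,\ 8-8g\}$ holds uniformly for $g > 0$.

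Then I would convert $c_1^2$ into characteristic numbers: for a closed symplectic $4$-manifold $c_1(X,\omega)^2 = (2\chi + 3\sigma)(X)$. Since $\chi$ and $\sigma$ are additive under gluing along $Y$ (which is a rational homology sphere issue aside — here one uses $\chi(X) = \chi(N) + \chi(P) - \chi(Y) = \chi(N) + \chi(P)$ as $\chi(Y) = 0$, and Novikov additivity $\sigma(X) = \sigma(N) + \sigma(P)$), we get $(2\chi+3\sigma)(X) = (2\chi+3\sigma)(N) + (2\chi+3\sigma)(P)$. Also $c_1(X)\cdot[D]$ restricts to $c_1(P)\cdot D$ since $[D]$ lies in the image of $H_2(P) \to H_2(X)$ and $c_1(X)|_P = c_1(P)$. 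Substituting yields $(2\chi+3\sigma)(N) \ge \min\{c_1(P)\cdot D + 2 - 2g,\ 8 - 8g\} - (2\chi+3\sigma)(P)$. For the refined statement, if $\kappa^{(P,\omega_P)}(N,\omega_N) \ge 0$ then $(X,\omega)$ is non-uniruled, so Lemma \ref{l:non-uniruled_maximal} applies directly and gives $c_1(X,\omega)^2 \ge c_1(X)\cdot[D] = c_1(P)\cdot D$, hence $(2\chi+3\sigma)(N) \ge c_1(P)\cdot D - (2\chi+3\sigma)(P)$.

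The main obstacle — or rather the only genuine point requiring care — is the bookkeeping identifying the maximal surface $D \subset P$ as a maximal surface in the glued manifold $X$: one needs Lemma \ref{l:maximalClass} to guarantee that exceptional classes of $X$ not representable symplectically in $N$ pair positively with $PD[(\omega_P,\alpha_P)]$, combined with minimality of $N$ to kill the remaining exceptional classes, and then the fact that $D$ represents a positive multiple of this dual class. A secondary subtlety is ensuring the characteristic-number additivity and the identity $c_1^2 = 2\chi + 3\sigma$ are applied with the correct signs and orientation conventions for manifolds with contact-type boundary being glued; these are standard (Novikov additivity plus $\chi(Y) = 0$) but should be stated. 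Everything else is a direct substitution into the two preceding lemmas.
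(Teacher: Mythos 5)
Your proposal is correct and follows essentially the same route as the paper: glue $N$ to $P$, note $c_1(X)^2=(2\chi+3\sigma)(X)$ together with additivity of $\chi$ and $\sigma$, and apply Lemma \ref{l:non-uniruled_maximal} or Lemma \ref{l:uniruled_maximal} according to whether $(X,\omega)$ is uniruled. The extra care you take in verifying that $D$ is maximal in the glued manifold and in comparing the non-uniruled bound with $c_1(X)\cdot[D]+2-2g$ for $g>0$ is consistent with (and slightly more explicit than) the paper's terse argument.
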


\begin{proof}
  Notice that, for any minimal strong symplectic filling $N$ of $Y$, the glued symplectic manifold $X=N \cup_Y P$ satisfies
 $c_1(X, \omega_X)^2=2\chi(X)+3\sigma(X)$, and
  $$\sigma(X)=\sigma(P)+\sigma(N), \quad  \chi(X)=\chi(P)+\chi(N).$$
  so it suffices to prove that
  $$(2\chi+3\sigma)(X) \ge c_1(P) \cdot D +2-2g$$ in general and
  $$ (2\chi+3\sigma)(X) \ge c_1(P) \cdot D$$ when $\kappa^{(P,\omega_P)}(N,\omega_N) \ge 0$, which in turn follows from Lemma \ref{l:non-uniruled_maximal} and Lemma \ref{l:uniruled_maximal}.
\end{proof}

Note that $b_2^+(P)+b_2^+(N)>1$ is sufficient to guarantee that $\kappa^{(P,\omega_P)}(N,\omega_N) \ge 0$.

Moreover, when $n \le 2g-2$, the bound obtained for $\kappa^{(P_{g,n},\omega_{P_{g,n}})}(N,\omega_N) \ge 0$ in Corollary \ref{c:explicitBounds} is sharp, where $(P_{g,n},\omega_{P_{g,n}})$ is the neighborhood of $D_{g,n}$.
 By tracing the way we obtain the bound in Corollary \ref{c:explicitBounds}, the bound for $\kappa^{(P_{g,n},\omega_{P_{g,n}})}(N,\omega_N) \ge 0$ is sharp
 if there is a minimal symplectic filling $(N,\omega_N)$ such that $\kappa(N \cup_Y P,\omega)=0$ and every exceptional class $e$ in $(N \cup_Y P,\omega)$
 satisfy $e \cdot D_{g,n}=1$.
 Notice that, there are symplectic surfaces of any genus in a $K3$ surface.
 Similar to above, by performing $2g-2-n$ blow-ups along a symplectic surface of genus $g$, we get a symplectic surface $\overline{D}$ of genus $g$ and self intersection number $n$.
 The complement of a neighborhood of $\overline{D}$ realizes the lower bound obtained in  Corollary \ref{c:explicitBounds}.

\subsection{Existence of maximal cap}

\begin{defn}
 Let $(P,\omega_P)$ be a concave symplectic manifold and $D$ be a smooth symplectic surface in $P$.
Then $D$ is called {\bf maximal} if, for any minimal symplectic filling $(N,\omega_N)$ of $\partial P$, $D$ is maximal in $(N \cup_{\partial P} P,\omega)$.

A cap is called maximal if it admits a maximal surface.
\end{defn}

The primary sources of a maximal cap are Donaldson caps.
\begin{lemma}\label{l:DonAreMax}
 A Donaldson hypersurface for $(P,\omega_P)$ is a maximal surface, and hence a Donaldson cap is a maximal cap.
\end{lemma}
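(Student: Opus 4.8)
The plan is to show that a Donaldson hypersurface $D$ for the concave pair $(P,\omega_P,\alpha_P)$ pairs positively with every symplectic exceptional class of any closed manifold $X = N \cup_{\partial P} P$ obtained by capping a minimal strong symplectic filling $N$. By Definition \ref{d:polarized}, the class $[D]$ is Lefschetz dual to a positive integral multiple $k\cdot\frac{1}{2\pi}[(\omega_P,\alpha_P)]$ of the relative class of the concave pair. The key point is that this relative class is exactly the object that Lemma \ref{l:maximalClass} produces positivity against: since $N$ is minimal, every symplectic exceptional class $e$ in $(X,\omega)$ satisfies $e \cdot PD[(\omega_P,\alpha_P)] > 0$.

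First I would make precise the homological identification: when $P$ is glued to $N$ along $Y=\partial P$, the relative cohomology class $\frac{1}{2\pi}[(\omega_P,\alpha_P)] \in H^2(P,\partial P;\mathbb{Q})$ has a Lefschetz dual in $H_2(P;\mathbb{Q})$, which maps under $H_2(P) \to H_2(X)$ to $PD_X[(\omega_P,\alpha_P)]$ — the class appearing in Lemma \ref{l:maximalClass}. Since $D$ is a \emph{closed} symplectic hypersurface contained in the interior of $P$, its fundamental class $[D] \in H_2(P) \to H_2(X)$ agrees with $k$ times this class (up to the standard compatibility of Lefschetz duality for manifolds with boundary with Poincar\'e duality of the closed manifold). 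Hence for any symplectic exceptional class $e$ in $(X,\omega)$ we get $e \cdot [D] = k\, (e \cdot PD[(\omega_P,\alpha_P)])$, and $k>0$.

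Next I would invoke Lemma \ref{l:maximalClass}: because $(N,\omega_N)$ is a minimal symplectic filling, every symplectic exceptional class $e$ in $X = N\cup_Y P$ satisfies $e \cdot PD[(\omega_P,\alpha_P)] > 0$. Combining with the previous paragraph, $e\cdot[D] = k\,(e\cdot PD[(\omega_P,\alpha_P)]) > 0$. Since this holds for every minimal symplectic filling $N$ and every symplectic exceptional class in the corresponding $X$, $D$ satisfies the defining property of a maximal surface in Definition \ref{d:maximalSurface} relative to each such $X$, i.e.\ $D$ is a maximal surface in the sense of the preceding definition, and therefore $(P,\omega_P)$ is a maximal cap.

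The main obstacle I expect is the bookkeeping in the second paragraph: carefully checking that Lefschetz duality of $[D]$ inside the manifold-with-boundary $P$ with respect to $\frac{1}{2\pi}[(\omega_P,\alpha_P)]$ translates, after gluing, into the honest Poincar\'e-dual statement in $H_2(X)$ that Lemma \ref{l:maximalClass} is phrased in. One must track the natural maps in the Mayer--Vietoris / long exact sequence of the pair $(X, P)$ and confirm that the image of $[D]$ is indeed a positive multiple of $PD_X[(\omega_P,\alpha_P)]$ and not, say, something that differs by a class coming from $N$ — this is where the hypothesis that $D$ lies in the interior of $P$ and is dual to a \emph{relative} class (vanishing along $\partial P$) is used. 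Once that identification is in place the positivity is immediate, and the rational-period assumption guarantees the multiple $k$ is a genuine positive integer so that no sign ambiguity arises.
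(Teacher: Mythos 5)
Your proposal is correct and follows exactly the paper's (one-line) argument: the paper also deduces the lemma directly from Lemma \ref{l:maximalClass}, with the identification of $[D]$ as a positive multiple of $PD[(\omega_P,\alpha_P)]$ left implicit. The only small slip is your claim that positivity of the multiple $k$ comes from the rational-period assumption; it really comes from $D$ being a symplectic hypersurface (so its $\omega$-area is positive), the rationality only ensuring an integral multiple exists at all.
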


\begin{proof}
 It follows directly from Lemma \ref{l:maximalClass}.
\end{proof}

\begin{thm}\label{t:closedDonHyper}
 Any contact 3-manifold admits a maximal cap.
 \end{thm}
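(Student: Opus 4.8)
The goal is to prove that any contact $3$-manifold $(Y,\xi)$ admits a maximal cap, and by Lemma \ref{l:DonAreMax} it suffices to produce a \emph{Donaldson cap}: a concave symplectic pair $(P,\omega_P,\alpha_P)$ of rational period admitting a closed symplectic hypersurface $D$ Lefschetz dual to an integral multiple of $\tfrac{1}{2\pi}[(\omega_P,\alpha_P)]$. The plan is to build such a $P$ by hand, starting from \emph{any} strong symplectic cap of $(Y,\xi)$ — which exists by Etnyre--Honda \cite{EtHo02} (and one may even arrange $b_2^+>1$) — and then modifying it so that the relative cohomology class $[(\omega_P,\alpha_P)]$ becomes rational and its Lefschetz dual is realized by an honest closed symplectic surface. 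The key input for the surface is a relative version of Donaldson's asymptotically holomorphic hypersurface theorem, in the form available for concave boundaries; this is precisely the circle of ideas attributed in the excerpt to \cite{Bir01} and \cite{Op13-1}, and in the $4$-dimensional situation one expects to invoke Lisca--Mati\'c \cite{LisMat} for the construction/holomorphic control near the contact boundary.

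Here are the steps in order. First, fix an arbitrary strong cap $(P_0,\omega_0)$ of $(Y,\xi)$ with its Liouville $1$-form $\alpha_0$ on $\partial P_0=Y$. Second, perturb $\omega_0$ within its deformation class (keeping the contact boundary fixed) so that the relative class $\tfrac{1}{2\pi}[(\omega_0,\alpha_0)]\in H^2(P_0,\partial P_0;\mathbb{R})$ becomes rational — this uses that rational points are dense and that a small perturbation of a symplectic form is still symplectic, together with Moser-type stability near the boundary to preserve the concave structure and the contactomorphism type of $(\partial P_0,\xi)$; after clearing denominators we may assume $k\cdot\tfrac{1}{2\pi}[(\omega_0,\alpha_0)]$ is integral for some $k\in\mathbb{Z}_{>0}$. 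Third, apply the relative Donaldson construction to $(P_0,k\omega_0)$: choose a compatible almost-complex structure that is $\xi$-adapted near the boundary, take a line bundle with first Chern class the integral lift of $k\tfrac{1}{2\pi}[(\omega_0,\alpha_0)]$, and produce, for $N$ large, an asymptotically holomorphic section whose zero set $D$ is a closed symplectic hypersurface contained in the interior of $P_0$, disjoint from a collar of the boundary, and Poincar\'e--Lefschetz dual to $Nk\tfrac{1}{2\pi}[(\omega_0,\alpha_0)]$. Fourth, observe that $P:=P_0$ with this $D$ is by construction a polarized Donaldson cap; Lemma \ref{l:DonAreMax} then says $D$ is a maximal surface, so $P$ is a maximal cap. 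One should double-check that the genus of $D$ is positive (needed to feed into Corollary \ref{c:explicitBounds}); since $D$ is Lefschetz dual to a large multiple of a class with positive self-pairing, adjunction forces $g(D)>0$ once $N$ is large, so this is automatic.

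The main obstacle is Step 3: making Donaldson's technique work relative to a \emph{concave} contact boundary, with genuine holomorphic (not merely asymptotically holomorphic) control in a collar, so that the zero locus stays in the interior and meets the symplectization region cleanly. Donaldson's original argument is for closed manifolds; the adaptation needs the boundary to be ``of contact type'' in a way compatible with the almost-complex structure, and one must ensure the transversality estimates survive uniformly up to (a collar of) the boundary — this is exactly where \cite{LisMat} (open-book/contact-adapted almost-complex structures) and the Biran--Opshtein-type refinements enter. A secondary technical point is the interplay between the rescaling factor $\lambda_P$ used when gluing caps to fillings and the integrality normalization: one must check that ``Lefschetz dual to an integral multiple of $\tfrac{1}{2\pi}[(\omega_P,\alpha_P)]$'' is insensitive to the harmless rescaling, which it is since scaling $\omega_P$ scales $[(\omega_P,\alpha_P)]$ and one can absorb it into $k$. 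Modulo these analytic inputs, the existence of a maximal cap reduces to the purely formal observation that any cap can be made rational-period and then polarized.
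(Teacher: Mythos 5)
Your argument hinges entirely on Step 3: a relative Donaldson-type theorem producing a \emph{closed} symplectic hypersurface in the interior of an arbitrary concave symplectic $4$-manifold, Lefschetz dual to a large multiple of $\tfrac{1}{2\pi}[(\omega_P,\alpha_P)]$. You correctly flag this as the main obstacle, but then defer it to \cite{LisMat} and ``Biran--Opshtein-type refinements'', and neither source supplies it: Biran \cite{Bir01} and Opshtein \cite{Op13-1} work with \emph{closed} symplectic manifolds (they inspire the \emph{definition} of a polarized cap, not its existence), and Lisca--Mati\'c is an algebro-geometric embedding theorem for Stein fillings, not an asymptotically holomorphic transversality result near a concave contact boundary. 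A Donaldson construction relative to a concave end --- with uniform transversality estimates up to a collar, confinement of the zero set to the interior, and control of the relative homology class --- is not established in the literature the paper cites and would be a substantial piece of analysis in its own right. So the proposal has a genuine gap at its central step; the surrounding points (density of rational relative classes via compactly supported perturbations, insensitivity to the rescaling $\lambda_P$, positivity of the genus) are routine by comparison and are not where the difficulty lies.

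The paper circumvents exactly this difficulty by a different route that never runs Donaldson's machinery on the cap at all. By Etnyre--Honda \cite{EtHo02}, $(Y,\xi)$ is Stein cobordant to a Stein fillable $(Y_2,\xi_2)$; by Lisca--Mati\'c \cite{LisMat}, a Stein filling $N$ of $(Y_2,\xi_2)$ embeds in a minimal surface of general type $X$ inside an affine surface $A$ whose projective compactification is $X$; the compactifying divisor $D=X\setminus A$ is \emph{ample}, hence after resolving to simple normal crossings and smoothing symplectically it yields a closed symplectic hypersurface in $P:=X\setminus N$ dual to a multiple of the relative symplectic class; gluing $P$ to the Stein cobordism gives the maximal cap. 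In short, the closed hypersurface comes from algebraic geometry (an ample divisor) rather than from a relative asymptotically holomorphic construction. To salvage your approach you would either need to actually prove the relative Donaldson theorem for concave boundaries, or do as the paper does and trade the arbitrary cap for one that comes with an ample divisor built in.
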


\begin{proof}[Proof of Theorem  \ref{t:closedDonHyper}]
By \cite{EtHo02},  there exists  a Stein fillable contact 3-manifold $(Y_2, \xi_2)$ such that $(Y, \xi)$ is Stein cobordant to $(Y_2, \xi_2)$.
Denote the Stein cobordism by $(SC, \tau)$.

Let $(N,\omega_N)$ be a Stein filling of $(Y_2, \xi_2)$.
By \cite{LisMat}, $(N,\omega_N)$ embeds into a minimal surface $X$ of general type. In fact, inspecting their argument,  we see that there is an affine surface $A$ in $X$ such that $N \subset A$
and $X$ is the projective compactification of $A$. The divisor $D:=X \backslash A$ is ample, and
by Hironaka's resolution of singularities we can assume that it is a simple normal crossing divisor.

In particular, we can smooth $D$ out to a smooth symplectic Donaldson hypersurface in $P:=X \backslash N$.
By gluing $P$ with $(SC, \tau)$, we get a maximal cap of $(Y,\xi)$.

\end{proof}


\begin{corr}\label{c:exactCob}

 Any contact 3-manifold  is Stein cobordant to a contact circle bundle.
\end{corr}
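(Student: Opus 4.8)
The plan is to extract the corollary directly from the proof of Theorem \ref{t:closedDonHyper}, where essentially all the work has already been done. Recall that in that proof, starting from an arbitrary contact $3$-manifold $(Y,\xi)$, one uses \cite{EtHo02} to produce a Stein cobordism $(SC,\tau)$ from $(Y,\xi)$ to a Stein fillable contact $3$-manifold $(Y_2,\xi_2)$; then a Stein filling $(N,\omega_N)$ of $(Y_2,\xi_2)$ is embedded, via \cite{LisMat}, into a minimal surface of general type $X$ as $N\subset A$ with $X$ the projective compactification of the affine surface $A$ and $D=X\setminus A$ an ample divisor. The key point to isolate is that the compact region $P':=X\setminus N$ (before smoothing the divisor, or rather the manifold obtained by deleting $N$) is a concave symplectic filling of $(Y_2,\xi_2)$, and $\partial P' = (Y_2,\xi_2)$.

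First I would observe that $(Y_2,\xi_2)$ is itself a contact circle bundle: this follows because the boundary of a regular neighborhood of a smooth symplectic (or ample) surface $\Sigma$ of positive self-intersection in a symplectic $4$-manifold is a circle bundle over $\Sigma$ equipped with its canonical contact structure — exactly the setup of Lemma \ref{l:sharp}. Concretely, after the blow-ups/resolution in the proof of Theorem \ref{t:closedDonHyper}, the divisor $D$ is replaced by a smooth symplectic surface (the Donaldson hypersurface), and we may instead arrange things so that $N$ is cut out as the complement of a neighborhood of a \emph{single} smooth ample symplectic surface. Then $(Y_2,\xi_2)=\partial N$ is, up to contactomorphism, a circle bundle over that surface with its natural contact structure.

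Second, I would assemble the cobordism: the Stein cobordism $(SC,\tau)$ from \cite{EtHo02} runs from $(Y,\xi)$ to $(Y_2,\xi_2)$, and $(Y_2,\xi_2)$ is a contact circle bundle by the previous paragraph. Since $(SC,\tau)$ is a Stein cobordism, in particular an exact (indeed Stein) symplectic cobordism, we conclude that $(Y,\xi)$ is Stein cobordant to a contact circle bundle, which is the assertion.

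The only genuine subtlety — and the step I would be most careful about — is ensuring that the target of the \cite{LisMat} embedding can be taken so that $\partial N$ is literally a contact circle bundle rather than merely the boundary of a neighborhood of a normal-crossing divisor. One route is to first run the construction of Theorem \ref{t:closedDonHyper} to obtain $N\subset X$ with $D=X\setminus A$ ample with normal crossings, then replace $X$ by a further blow-up / deformation so that $N$ sits inside the complement of a tubular neighborhood of a single smooth very ample divisor $\Sigma$ in its linear system (possible by taking a sufficiently large power of the ample line bundle and a generic smooth section, à la Donaldson); the complement of such a neighborhood is a Stein domain $N'$ containing $N$, with $(Y',\xi')=\partial N'$ a contact circle bundle over $\Sigma$, and one composes the Stein cobordism $SC$ with the Stein cobordism from $(Y_2,\xi_2)=\partial N$ to $(Y',\xi')=\partial N'$ given by $N'\setminus \mathrm{int}\,N$. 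Modulo this smoothing/genericity argument, the corollary is immediate.
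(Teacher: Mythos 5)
Your overall skeleton agrees with the paper's: compose the Stein cobordism from \cite{EtHo02} with a Stein cobordism from $(Y_2,\xi_2)=\partial N$ out to the boundary of a neighborhood of the compactifying divisor $D$, and then trade the normal-crossing divisor for a smooth symplectic surface whose neighborhood-boundary is a contact circle bundle. Two points need repair, one minor and one substantive. The minor one: the claim in your opening paragraph that $(Y_2,\xi_2)$ \emph{is} a contact circle bundle is false as stated --- $(Y_2,\xi_2)$ is the boundary of the Stein filling $N$ from \cite{LisMat}, an essentially arbitrary Stein fillable contact manifold; it is only Stein \emph{cobordant} to the boundary $(Y_3,\xi_3)$ of a regular neighborhood of $D$, via the region between $\partial N$ and a high level set of the plurisubharmonic exhaustion of the affine surface $A=X\setminus D$. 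You effectively correct this in your final paragraph, so this is a presentational slip.

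The genuine gap is in your last step. You take a generic smooth member $\Sigma\in|kD|$ disjoint from $N$ (this part is fine: Bertini plus openness of nonvanishing on the compact set $N$), set $N'$ to be the complement of a tubular neighborhood of $\Sigma$, and assert that $N'\setminus\mathrm{int}\,N$ is a Stein cobordism. What is not justified is that this region carries a Liouville/Weinstein structure with concave end $(\partial N,\xi_2)$ and convex end $\partial N'$. The natural Stein structure on $N'$ comes from a plurisubharmonic exhaustion of the affine surface $X\setminus\Sigma$, whereas $\partial N$ is a level set of the exhaustion of $X\setminus D$; the two Liouville primitives for the same ambient K\"ahler form differ by a closed $1$-form on the overlap that need not be exact, and even when it is, there is no a priori reason that $\partial N$ remains a contact-type hypersurface, with the correct co-orientation and inducing $\xi_2$, for the new primitive. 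Some interpolation or holomorphic-convexity argument is required, and that is exactly where the content of the step lies. The paper avoids this global compatibility issue entirely: it works inside the fixed Weinstein structure on $N(D')\setminus D'$ furnished by \cite{McL16}, smooths the nodes of the divisor \emph{locally} in Darboux charts via the model $\{z_1z_2=\epsilon\}$, and checks that $N(D')\setminus U(D'_{\epsilon})$ is obtained from $N(D')\setminus U(D'_0)$ by attaching Weinstein handles, yielding an explicit Stein cobordism from $(Y_3,\xi_3)$ to the contact circle bundle $\partial\, Cl(U(D'_{\epsilon}))$. To salvage your global linear-system route you would need to supply the missing compatibility-of-Stein-structures argument; otherwise the local smoothing argument is the cleaner path.
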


\begin{proof}
By the proof of Theorem \ref{t:closedDonHyper}, any contact $3$-manifold $(Y, \xi)$ is Stein cobordant to a Stein fillable one $(Y_2, \xi_2)$, which in turn Stein fillable
to the contact boundary $(Y_3, \xi_3)$ of a regular neighborhood of the compactifying divisor $D$.
We can perturb $D$ to a symplectic divisor $D'$ with symplectic orthogonal intersection points (\cite{Go95}).
By \cite{McL16} (see also \cite{LM}), the contact boundary of a small regular neighborhood $N(D')$ of $D'$ is contactomorphic to $(Y_3, \xi_3)$ and $N(D') \backslash D'$ is equipped
with a Weinstein structure.
We pick a Darboux chart and a compatible integrable complex structure near each intersection points of $D'$ to
resolve the symplectic orthogonal intersection points of $D'$ using the local model $C_{\epsilon}:=\{(z_1,z_2) \in \mathbb{C}^2| z_1z_2=\epsilon\}$, where $\epsilon=0$ corresponds
to the symplectic orthogonal intersection and $\epsilon \neq 0$ corresponds to smoothing.
We denote a choice of smoothing family as $D'_{\epsilon} \subset N(D')$.
By inspecting these local smoothings, one can see that $N(D')\backslash  U(D'_{\epsilon})$ for $\epsilon \neq 0$ can be obtained by adding Weinstein handles to
$N(D') \backslash U(D'_0)$ where $U(D'_{\epsilon})$ is a small open regular neighborhood of $D'_{\epsilon}$ (cf. \cite{Nguyen}).
As a result, $(Y_3, \xi_3)$ is Stein cobordant to $\partial Cl (U(D'_{\epsilon}))$ which is a contact circle bundle.
Here $Cl$ stands for taking closure.
\end{proof}



\begin{proof}[Proof of Theorem \ref{t:charNumBound}]
 By Theorem \ref{t:closedDonHyper} and Lemma \ref{l:DonAreMax}, we can find a maximal cap.  Therefore, the result follows by Corollary \ref{c:explicitBounds}.

 In particular, if $(Y,\xi)$ admits a polarized cap $(P, \omega_P, D)$ such that the genus of $D$ is greater than $0$,
 then $(2\chi+3\sigma)(N) \ge \min\{c_1(P) \cdot D +2-2g,8-8g\}-(2\chi+3\sigma)(P)$ for any minimal symplectic fillings $N$.
 If $b_2^+(P)>1$, then we have the better bound $(2\chi+3\sigma)(N) \ge c_1(P) \cdot D - (2\chi+3\sigma)(P)$.
\end{proof}



\begin{eg}\label{e: no upper bound}
Baykur and Van Horn Morris provide infinitely many contact $3$-manifolds $(Y_k,\xi_k)$, each of which has infinitely Stein filling with strictly increasing  $\chi$, strictly decreasing $\sigma$ and strictly increasing $2\chi+3\sigma$ in \cite{BaVHM15}.

\end{eg}

\begin{proof}[Proof of Theorem \ref{t:universalMinimal}]
Let $(Y,\xi)$ be as before and $(P,\omega_P)$ any symplectic cap of $(Y,\xi)$.
By Theorem \ref{t:closedDonHyper}, we can find a Donaldson hypersurface and hence maximal surface $D$ in $(P,\omega_P)$.
Without loss of generality, we can assume that the genus $g$ of $D$ is positive.
Denote the self-intersection number of $D$ as $s$.

We consider a symplectic four tours $(T^4,\omega)$ with product symplectic form.
One can easily find a symplectic surface $D'$ of genus $g$ in $(T^4,\omega)$.
By adjunction, $[D']^2 \ge 0$.
We can perform $[D']^2 +s$ symplectic blow-ups using disjoint balls centered along $D'$ and get a symplectic surface $D"$ of genus $g$ and self-intersection $-s$.
Denote the resulting ambient manifold $(X',\omega')$.
Notice that, by Taubes SW theory, all symplectic exceptional spheres in $(X',\omega')$ intersect $D"$.
We now perform Gompf's symplectic sum surgery between $(P,\omega_P)$ and $(X',\omega')$ along $D$ and $D"$, which results in another symplectic cap $(P',\omega_P')$ of $(Y,\xi)$.

Now, for any minimal symplectic filling $(N,\omega_N)$ of $(Y,\xi)$, the glued symplectic manifold $N \cup P'$
can also be obtained as performing symplectic sum surgery between $N \cup P$ and $X'$.
Since $D$ and $D"$ are maximal in  $N \cup P$ and $X'$, respectively.
The minimality theorem of Usher \cite{Us09} implies that $N \cup P'$ is minimal.
\end{proof}

\subsection{Exact symplectic cobordisms of fillable 3-manifolds}\label{s:ExactCob}

We discuss the analogue of Theorem \ref{t:charNumBound} for exact cobordism of fillable 3-manifolds and propose a weakened version of Stipsicz's conjecture for further study.

\subsubsection{The bound on $2\chi+3\sigma$}

We will prove Corollary  \ref {c:charSelfExact}, in fact the more general  Proposition \ref{l:exactCobBound}, based on the following observation.

\begin{prop}\label{l:exactCobAttach}
 If $W$ is an exact cobordism from $(Y_1, \xi)$ to $(Y_2, \xi_2)$ and $N$ is a strong symplectic filling of $(Y_1, \xi)$ with $k$ exceptional classes,
 then $N \cup_{Y_1} W$ also has exactly $k$ exceptional classes.

 In particular, if $N$ is minimal, then $N \cup_{Y_1} W$ is also minimal.
\end{prop}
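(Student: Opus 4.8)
The plan is to glue the exact cobordism $W$ on top of the filling $N$, pass to a closed symplectic manifold by capping, and then count symplectic exceptional classes using Taubes–Seiberg–Witten theory together with the maximal-surface machinery from Section \ref{ss:boundE}. Concretely, first I would fix a symplectic cap $(P,\omega_P)$ of $(Y_2,\xi_2)$ and form the closed symplectic $4$-manifold $X = N \cup_{Y_1} W \cup_{Y_2} P$. Capping $(Y_1,\xi)$ by $W \cup_{Y_2} P$ shows $X$ is also a closed capping of $(Y_1,\xi)$ containing $N$, so the two decompositions of $X$ (with $N$ on one side, or with $N \cup_{Y_1} W$ on one side) are the \emph{same} closed manifold; only the splitting hypersurface moves. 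The key point is that exceptional classes of $N$ and of $N\cup_{Y_1}W$ can each be read off inside this single $X$.

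The second step is to use Corollary \ref{c:disjointExceptional}: the symplectic $-1$-spheres in a convex $4$-manifold come in pairwise-orthogonal homology classes, one for each smooth $-1$-sphere up to sign. So it suffices to count the smooth $-1$-sphere classes of $N$ versus those of $N\cup_{Y_1}W$. Here I would invoke that $W$ is an \emph{exact} cobordism: the Liouville form is global on $W$, so $H_2(W,Y_1)$ carries no class pairing nontrivially against $[\omega_W]$ in a way that could host a symplectic sphere — more precisely, as in Lemma \ref{l:maximalClass}, any symplectic exceptional class of the closed $X$ either already has an embedded symplectic representative in $N$ or pairs positively with $PD[(\omega_P',\alpha')]$ for a cap $P'$ of $(Y_1,\xi)$ built from $W\cup P$; the exactness of $W$ forces that the part of such a class coming from $W$ (relative to $Y_1$) is null-homologous, hence any new exceptional sphere would have to live in $N$ already. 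This yields the inclusion one way; the reverse inclusion (an exceptional class of $N$ persists in $N\cup_{Y_1}W$) is easier, since $N \hookrightarrow N\cup_{Y_1}W$ and a symplectic $-1$-sphere in $N$ stays a symplectic $-1$-sphere in the larger convex manifold, then in $X$.

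I expect the main obstacle to be making precise the claim that no \emph{new} exceptional class is created by attaching $W$. Taubes' theorem produces, for each exceptional class $e$ of $X$, an $\omega$-symplectic $-1$-sphere $S$ in class $e$ or $-e$; one must show $S$ can be taken to miss the region $W$, i.e.\ that $S$ is homologous into $N$ or into the far cap. The handle on this is again Lemma \ref{l:maximalClass} applied to the concave pair $(W\cup_{Y_2} P, \omega, \alpha_{Y_1})$ as a cap of $(Y_1,\xi)$: since $W$ is exact, $c_1$ and $[\omega]$ restricted to $W$ (rel boundary) behave as in a trivial collar, so $PD[(\omega|_{W\cup P},\alpha_{Y_1})]$ is supported away from $W$ and any exceptional class pairing positively with it must have its sphere pushed out of $W$; combined with the orthogonality from Corollary \ref{c:disjointExceptional}, a rank count then gives exactly $k$ exceptional classes in $N\cup_{Y_1}W$. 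The minimality statement is then immediate: $N$ minimal means $k=0$, so $N\cup_{Y_1}W$ has no symplectic exceptional sphere, and by Proposition \ref{p:minimality} it is smoothly minimal as well.
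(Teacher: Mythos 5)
Your proposal is correct and follows essentially the same route as the paper: cap $Y_2$ with $P$, view $W\cup_{Y_2}P$ as a cap of $(Y_1,\xi)$, use exactness of $W$ to place the Lefschetz dual $h$ of its relative symplectic class in $H_2(P)$, and contradict Lemma \ref{l:maximalClass} via the vanishing of the intersection pairing between classes from $H_2(N\cup_{Y_1}W)$ and classes from $H_2(P)$, with Corollary \ref{c:disjointExceptional} reducing the count to the minimal case. The only cosmetic difference is that the paper states the contradiction purely homologically ($e\cdot h=0$ versus $e\cdot h>0$) rather than via your detour about pushing a Taubes sphere out of $W$, which is unnecessary.
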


\begin{proof}
By Corollary \ref{c:disjointExceptional}, we know that all exceptional classes in $N \cup_{Y_1} W$ are disjoint.
Therefore, it suffices to consider the case where $N$ is minimal by blowing down all the exceptional classes in $N$
and we now want to prove that $N \cup_{Y_1} W$ is minimal.

 Assume the contrary that there is an exceptional class $e \in H_2(N \cup_{Y_1} W)$.
We can cap $N \cup_{Y_1} W$ by $P$ to obtain a closed symplectic manifold $X$.
Since $W$ is exact, the Lefschetz dual of the relative symplectic class $[(\omega_{W \cup_{Y_2} P},\alpha_{W \cup_{Y_2} P})]$ of $W \cup_{Y_2} P$ comes from $H_2(P)$.
We denote the Lefschetz dual as $h$.
Moreover, as $N$ is minimal, we can apply Lemma \ref{l:maximalClass} to the cap $W \cup_{Y_2} P$ and hence $e \cdot h > 0$.
This is a contradiction because $h$ comes from $H_2(P)$ but $e$ comes from $H_2(N \cup_{Y_1} W)$.
\end{proof}

\begin{rmk}
It was pointed out to us by Youlin Li that Etnyre has an argument to show that gluing a minimal strong symplectic filling and a Stein cobordism results in a minimal strong symplectic filling  (See Proposition 2.3 in \cite{DingL}). 
In general, it is not true that gluing a minimal symplectic filling with a minimal symplectic cobordism results in a minimal symplectic filling.
In light of   Proposition \ref{l:exactCobAttach},  exact cobordism is an appropriate kind of cobordism which is good for gluing in the category of
minimal symplectic fillings.

\end{rmk}

For a strongly fillable contact 3-manifold $(Y, \xi)$, let   $n_{Y, \xi}$  be the minimum of $2\chi+3\sigma$ among all of its minimal fillings (which exists by Theorem \ref{t:charNumBound}).

\begin{prop}\label{l:exactCobBound}
 If $(Y_1,\xi_1)$ is strongly symplectic fillable and $(Y_2,\xi_2)$ is another contact 3-manifold, then $2\chi(W)+3\sigma(W) \ge n_{Y_2, \xi_2}-n_{Y_1, \xi_1}$ for all exact cobordism $W$ from $Y_1$ to $Y_2$.

 Moreover, if $Kod(Y_2, \xi_2)=-\infty$ (or $Kod(Y_2, \xi_2)=0$ and $(Y_1,\xi_1)$ is exact fillable), then there are Betti number bounds on $W$.
 \end{prop}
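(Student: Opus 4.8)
The plan is to deduce Proposition \ref{l:exactCobBound} from Proposition \ref{l:exactCobAttach} together with the additivity of $\chi$ and $\sigma$ under gluing along a contact hypersurface. First I would take an arbitrary strong symplectic filling $(N_1,\omega_{N_1})$ of $(Y_1,\xi_1)$ that is minimal and realizes $n_{Y_1,\xi_1} = 2\chi(N_1)+3\sigma(N_1)$ (such exists by Theorem \ref{t:charNumBound}). Given an exact cobordism $W$ from $(Y_1,\xi_1)$ to $(Y_2,\xi_2)$, I would glue to form $N_2 := N_1 \cup_{Y_1} W$, which is a strong symplectic filling of $(Y_2,\xi_2)$; by Proposition \ref{l:exactCobAttach} it is minimal (since $N_1$ is). Now $\chi$ and $\sigma$ are additive: $\chi(N_2) = \chi(N_1)+\chi(W)$ and, because $Y_1$ is a rational homology separating hypersurface contributing nothing to signature via Novikov additivity, $\sigma(N_2) = \sigma(N_1)+\sigma(W)$. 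Hence
$$
2\chi(N_2)+3\sigma(N_2) = \bigl(2\chi(N_1)+3\sigma(N_1)\bigr) + \bigl(2\chi(W)+3\sigma(W)\bigr) = n_{Y_1,\xi_1} + \bigl(2\chi(W)+3\sigma(W)\bigr).
$$
Since $N_2$ is a minimal filling of $(Y_2,\xi_2)$, its characteristic number is at least $n_{Y_2,\xi_2}$, giving $n_{Y_1,\xi_1} + 2\chi(W)+3\sigma(W) \ge n_{Y_2,\xi_2}$, which rearranges to the claimed inequality. Corollary \ref{c:charSelfExact} is then the special case $Y_1 = Y_2$, where the right side is $0$.

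For the Betti number bounds under the extra hypotheses, the idea is to transport the bounds of Theorems \ref{t:uniruled} and \ref{t:CY} across the cobordism. If $Kod(Y_2,\xi_2) = -\infty$, fix a uniruled cap $(P,\omega_P)$ of $(Y_2,\xi_2)$; then $P \cup_{Y_2} W$ is a symplectic cap of $(Y_1,\xi_1)$, and one checks using the relative symplectic and first Chern classes (as in Section 2 of \cite{LMY}, exactness of $W$ ensuring the capping class computation goes through) that it remains uniruled, so $Kod(Y_1,\xi_1) = -\infty$ as well. Then for any minimal filling $N_1$ of $(Y_1,\xi_1)$, Theorem \ref{t:uniruled} bounds $b_i(N_1)$ in terms of $P\cup_{Y_2} W$, and adding the (fixed, cobordism-dependent) Betti number contributions of $W$ gives bounds on $b_i(N_1\cup_{Y_1} W) = b_i(N_2)$ over all minimal fillings $N_1$, hence over all minimal fillings $N_2$ obtained this way. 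The case $Kod(Y_2,\xi_2) = 0$ with $(Y_1,\xi_1)$ exactly fillable is identical, using a Calabi-Yau cap $P$ of $(Y_2,\xi_2)$, the fact that $P\cup_{Y_2}W$ stays Calabi-Yau (or becomes uniruled after deformation, which only helps) when $W$ is exact, and Theorem \ref{t:CY}, which requires the filling to be exact — this is exactly where the exact fillability of $(Y_1,\xi_1)$ enters.

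The main obstacle I anticipate is the Betti number part rather than the characteristic number inequality. The inequality itself is essentially bookkeeping once Proposition \ref{l:exactCobAttach} is in hand. For the Betti bounds one must be careful that every minimal filling $N_2$ of $(Y_2,\xi_2)$ that could possibly arise is accounted for: the argument as sketched bounds $b_i(N_2)$ only for those $N_2$ of the form $N_1 \cup_{Y_1} W$, so one should either restrict the statement accordingly or argue that the bounds on $b_i(N_1\cup_{Y_1}W)$ in terms of the fixed cap $P\cup_{Y_2}W$ are what is actually being asserted (bounds on $W$, in the language of the proposition). A second delicate point is verifying that capping an exact cobordism onto a uniruled (resp. Calabi-Yau) cap preserves the relevant inequality $c_1 \cdot [(\omega,\alpha)] > 0$ (resp. $= 0$); this is the same computation invoked in the proof of the Proposition just before Remark following Lemma \ref{l:KodcobIncrease}, so I would simply cite that mechanism and the relevant section of \cite{LMY}. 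Otherwise the proof is a routine assembly of the preceding results.
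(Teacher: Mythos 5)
Your proof of the main inequality is correct and is exactly the paper's (one-line) argument spelled out: minimality of $N_1\cup_{Y_1}W$ via Proposition~\ref{l:exactCobAttach}, Novikov additivity of $\sigma$ and additivity of $\chi$ along the separating hypersurface $Y_1$, and then Theorem~\ref{t:charNumBound} applied to the resulting minimal filling of $(Y_2,\xi_2)$. Nothing to add there.

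The ``moreover'' clause, however, is not established by your argument, and you half-notice this yourself. The proposition asserts Betti number bounds on $W$, uniform over \emph{all} exact cobordisms $W$ from $(Y_1,\xi_1)$ to $(Y_2,\xi_2)$. Your argument transports the cap the wrong way: you form $P\cup_{Y_2}W$ and use it to bound $b_i(N_1)$ for fillings of $(Y_1,\xi_1)$, with constants coming from Theorem~\ref{t:uniruled}/\ref{t:CY} applied to the cap $P\cup_{Y_2}W$ --- but those constants depend on $W$, which is precisely the object you are trying to bound, so ``adding the cobordism-dependent contributions of $W$'' is circular for this purpose. The correct direction is the same one you already used for the characteristic numbers: fix once and for all a minimal strong (respectively exact, in the $Kod=0$ case) filling $N_1$ of $(Y_1,\xi_1)$ and a uniruled (respectively Calabi--Yau) cap $P$ of $(Y_2,\xi_2)$, both independent of $W$. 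For any exact $W$, Proposition~\ref{l:exactCobAttach} makes $N_1\cup_{Y_1}W$ a minimal (respectively exact) filling of $(Y_2,\xi_2)$, so Theorem~\ref{t:uniruled} (respectively Theorem~\ref{t:CY}) gives $b_i(N_1\cup_{Y_1}W)\le n_i$ with $n_i$ depending only on $P$; the Mayer--Vietoris sequence for the decomposition along $Y_1$ then yields $b_i(W)\le b_i(Y_1)+b_i(N_1\cup_{Y_1}W)\le b_i(Y_1)+n_i$, a bound independent of $W$. This is also where the hypotheses enter cleanly: exact fillability of $(Y_1,\xi_1)$ is needed so that $N_1\cup_{Y_1}W$ is an exact filling, as required by Theorem~\ref{t:CY}.
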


\begin{proof}[Proof of Proposition  \ref{l:exactCobBound} and Corollary \ref {c:charSelfExact}]
 A consequence of Theorem \ref{t:charNumBound}, \ref{t:CY}, \ref{t:uniruled}, Proposition \ref{l:exactCobAttach} and the Novikov signature additivity.
 \end{proof}

\begin{eg}
The examples of Baykur and Van Horn-Morris   mentioned in Example \ref{e: no upper bound}
implies that there is no upper bound of $2\chi+3\sigma$ for exact cobordism from $(S^3,\xi_{std})$ to $(Y_k,\xi_k)$.
\end{eg}

The fillability condition of $(Y_1, \xi_1)$ is important in  Proposition \ref{l:exactCobBound} as seen from  following remark.

\begin{rmk}\label{p:AlmostComplex}
 For any negative integer $n$, there is an exact symplectic cobordism $(W,\omega)$ from $(S^3,\xi_{OT})$ to itself such that $2\chi(W)+3\sigma(W) \le n$.
 Here $\xi_{OT}$ stands for the overtwisted contact structure on $S^3$ in the homotopy class of plane field representing the zero element of $\pi_3(S^2)$.

 By \cite{VdeV66} (see also \cite{Gei01}), there is a closed almost complex manifold $(X,J)$ of real dimension four with $2\chi(X)+3\sigma(X) < n$.
 By removing two disjoint small balls from it, we have an almost complex smooth cobordism $W$ from $S^3$ to itself,
 whose homotopy class of $2$-plane fields restricted to either of the two boundary components represent the zero element of $\pi_3(S^2)$.
 This homotopy class supports exactly one overtwisted contact structure on $S^3$ up to isotopy.
 Theorem 1.1 of \cite{EliMur15} (see also the paragraph after it) concludes the remark.
\end{rmk}

\subsubsection{The monoid of exact self-cobordisms}

\begin{defn}
For a closed co-oriented contact $3$-manifold $(Y,\xi)$, the exact (resp. strong, Stein) self-cobordism monoid is the monoid
whose element is exact (resp. strong, Stein) cobordism from $(Y,\xi)$ to itself and multiplication is given by concatenation.
\end{defn}

Since we can always stack elements in the exact/Stein/strong self-cobordism monoid,
if there is any Stein fillable contact $3$-manifold $(Y,\xi)$ such that it has a Stein self-cobordism with non-zero $\chi$ (resp. $\sigma$),
then there is no uniform bound of $\chi$ (resp. $\sigma$) for its Stein fillings.
As a result, any such $(Y,\xi)$ provides a counterexample of Stipsicz's conjecture (see Introduction).


On the other hand, proving that any Stein self-cobordism has zero $\chi$ and $\sigma$ is not an easy question.
If $\chi=\sigma  = 0$ for all Stein self-cobordisms of $(Y,\xi)$, then $(Y,\xi)$ cannot be obtained from itself via a (non-empty) sequence of Legendrian $(-1)$-surgery.
A progress along this line was made by Plamenevskaya who prove that tight lens spaces cannot be obtained from itself via Legendrian $(-1)$-surgery (\cite{Pl12}).

In fact, we now know that any exact self-cobordism of a strongly  (resp. exact) fillable contact
manifold $(Y,\xi)$ with $Kod(Y,\xi)=-\infty$ (resp. $Kod(Y,\xi)=0$) has  $\chi=\sigma  = 0$,
by Theorem \ref{t:CY}, \ref{t:uniruled} and Proposition \ref{l:exactCobAttach}.
Therefore, it provides a vast amount of Stein fillable contact $3$-manifolds which cannot be obtained from itself via a (non-empty) sequence of Legendrian $(-1)$-surgery.

Since $\chi=\sigma= 0$ is a strong restriction, the following questions are very natural.

\begin{question}\label{q:trivialMonoid}
Is it true that any exact/Stein self-cobordism monoid of a contact manifold $(Y,\xi)$ with $Kod(Y,\xi)=-\infty,0$ always trivial?
If not, is it possible to classify?
\end{question}

\begin{eg}
Any exact self-cobordism of the tight $(S^3,\xi)$ is smoothly trivial.
This is because any minimal symplectic filling of $(S^3,\xi)$ is diffeomorphic to a ball, any exact self-cobordism is the complement of a ball in a ball and hence smoothly trivial.
\end{eg}

However, counterexamples for first part of Question \ref{q:trivialMonoid} can be easily found as follows.

\begin{prop}\label{p:infMonoid}
 There are infinitely many Stein fillable uniruled contact $3$-manifolds whose Stein self-cobordism monoid is non-trivial.
 Moreover, for each integer $n$, the cardinality of the monoid of a member of this infinite family is larger than $n$.
\end{prop}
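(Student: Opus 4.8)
The plan is to realize the family explicitly by connected sums and to detect non-triviality of the monoid through the action on first homology. Take $(Y_m,\xi_m):=(\#^m(S^1\times S^2),\xi_{std})$ for $m\ge 2$. Each $(Y_m,\xi_m)$ is Stein filled by $\natural^m(S^1\times B^3)$, built from $B^4$ by attaching $m$ subcritical Weinstein $1$-handles; cancelling those $1$-handles by $m$ Weinstein $2$-handles yields a Stein cobordism from $(Y_m,\xi_m)$ to $(S^3,\xi_{std})$. Since $(S^3,\xi_{std})=(Y_{0,1},\xi_{0,1})$ has $Kod=-\infty$ by Lemma \ref{l:sharp}, the second bullet of Lemma \ref{l:KodcobIncrease} forces $Kod(Y_m,\xi_m)=-\infty$, so every $(Y_m,\xi_m)$ is Stein fillable and uniruled.

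The mechanism for producing monoid elements is the following. Given any contactomorphism $\phi$ of a contact $3$-manifold $(Y,\xi)$ with contact form $\alpha$, let $W_\phi$ be the trivial Liouville cobordism $(Y\times[0,1],d(e^t\alpha))$ with negative end parametrized by $\mathrm{id}_Y$ and positive end parametrized by $\phi$. This is an exact, indeed handle-free Weinstein, hence Stein, self-cobordism of $(Y,\xi)$, and concatenation satisfies $[W_\phi]\cdot[W_\psi]=[W_{\psi\circ\phi}]$; thus $\phi\mapsto[W_\phi]$ is a group homomorphism from $\pi_0$ of the contactomorphism group into the Stein self-cobordism monoid, with $[W_{\phi^{-1}}]$ inverse to $[W_\phi]$. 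Running $H_1(Y)$ around the underlying cobordism through its two parametrized boundary inclusions produces an automorphism of $H_1(Y)$ which is multiplicative under concatenation and is a genuine invariant of the element of the monoid; in particular $\phi_*\ne\psi_*$ on $H_1(Y)$ implies $[W_\phi]\ne[W_\psi]$.

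Specializing, permuting the $m$ summands of $\#^m(S^1\times S^2)$ gives an action of the symmetric group $S_m$ by contactomorphisms of $(Y_m,\xi_m)$, and distinct permutations act by distinct permutations of a basis of $H_1(Y_m)\cong\mathbb{Z}^m$. Hence the $m!$ classes $[W_\sigma]$, $\sigma\in S_m$, are pairwise distinct and form a subgroup isomorphic to $S_m$ inside the Stein self-cobordism monoid of $(Y_m,\xi_m)$. This makes the monoid non-trivial for every $m\ge 2$, and, given $n$, any member with $m!>n$ has monoid of cardinality larger than $n$. As the $Y_m$ have pairwise distinct $b_1$, this is an infinite family of examples.

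The one point requiring care — the rest being soft — is the bookkeeping in the second paragraph: one must confirm that the self-cobordism monoid really remembers the parametrizations of the two boundary components, so that $W_\phi$, which underlies the product $Y\times[0,1]$ as a smooth manifold, is not identified with the identity element unless $\phi$ is isotopic to $\mathrm{id}_Y$, and that the $H_1$-automorphism above descends to the monoid. If one instead insists on topologically non-trivial cobordisms, one can attach to $(Y_m,\xi_m)\times[0,1]$ a cancelling Weinstein $1$/$2$-handle pair whose $2$-handle is threaded through the existing handles so as to realize a summand permutation as monodromy; verifying this requires a handle-calculus computation, which would be the principal technical burden of that alternative route.
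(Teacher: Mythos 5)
Your construction hinges on a reading of the self-cobordism monoid in which elements remember the contact parametrizations of the two boundary components, so that the product $Y\times[0,1]$ with its positive end reparametrized by a contactomorphism $\phi$ counts as a different element from the identity whenever $\phi$ is not isotopic to $\mathrm{id}$. You flag this yourself, and it is exactly where the argument breaks against the notion the paper actually uses: every element $W_\phi$ you produce is the trivial Liouville cobordism as a symplectic manifold, and the paper's surrounding text makes clear that such cobordisms are to be regarded as trivial --- it treats the monoid of the tight $S^3$ as (essentially) trivial on the grounds that every exact self-cobordism is \emph{smoothly} trivial, it distinguishes its own elements by the relative homology $H_*(W,\partial_-W)$, and it emphasizes that its non-trivial elements fail to induce isomorphisms on Heegaard Floer homology (your $W_\phi$ all induce isomorphisms). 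Under that convention all of your monoid elements collapse to the identity and the proof yields nothing. The alternative you sketch --- threading a cancelling Weinstein $1$/$2$-handle pair so as to realize a summand permutation topologically --- is the step that would actually be needed, and you do not carry it out.

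The paper's own route is different and sidesteps the issue entirely: starting from a connected Stein fillable uniruled $(Y,\xi)$, attach a Weinstein $1$-handle to reach $Y'$ with $b_i(Y')=b_i(Y)+1$ and then a cancelling $2$-handle to return to $Y$; splitting at the intermediate levels gives Stein cobordisms $V_2\colon Y\to Y'$ and $V_3\colon Y'\to Y$, and $V_3\cup_Y V_2$ is a Stein self-cobordism of $Y'$ containing $Y\not\cong Y'$ as a separating hypersurface, hence not smoothly a product (it is moreover an idempotent). Iterating from $\#_{k-1}(S^1\times S^2)$ to $\#_k(S^1\times S^2)$ gives the infinite family, and the cardinality count comes from the cobordisms $W_{k,l}$, $l<k$, built from $k-l$ cancelling $2$-handles followed by $k-l$ $1$-handles, which are pairwise distinguished by $H_*(W_{k,l},\partial_-W_{k,l})$. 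Your preliminaries (Stein fillability and uniruledness of $\#^m(S^1\times S^2)$ via Lemmas \ref{l:sharp} and \ref{l:KodcobIncrease}) are fine, but the detection mechanism is the heart of the matter, and yours does not survive the equivalence relation the paper is working with.
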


\begin{proof}
 Let $(Y,\xi)$ be a connected Stein fillable uniruled contact manifold.
 Then $W_1=[0,1] \times  Y$ is a Stein self-cobordism of $Y$.
 We can attach a Weinstein $1$-handle to $ \{1\} \times Y$ and call the resulting Stein cobordism $W_2$.
 Let the convex end of $W_2$ be $Y'$.
 We can attach a canceling Weinstein $2$-handle to $W_2$ along a Legrendrian knot and call the resulting Stein cobordism $W_3$.
 Let $V_2=Cl(W_2 \backslash W_1)$ and $V_3=Cl(W_3 \backslash W_2)$ be the Stein cobordism from $Y$ to $Y'$ and from $Y'$ to $Y$, respectively.
 By construction, $V_2 \cup_{Y'} V_3$ is smoothly trivial and $b_i(Y')=b_i(Y)+1$ for $i=1,2$, where $b_i$ are the Betti numbers.
 If we stack infinitely many $V_2 \cup_{Y'} V_3$ (or $V_3 \cup_{Y} V_2$) together, we get a smooth manifold diffeomorphic to $ \mathbb{R} \times Y$.
 Since $Y$ is not diffeomorphic to $Y'$, it shows that $V_3 \cup_{Y} V_2$ is not smoothly trivial.
 As a result, $V_3 \cup_{Y} V_2$ is a non-trivial element in the Stein self-cobordism monoid.
 Moreover, it is an idempotent in the monoid.

 Now, it suffices to show that  infinitely many $Y'$ can be obtained in this way and each of them is a Stein fillable uniruled contact $3$-manifolds.
 Since $Y$ is Stein fillable, so is $Y'$.
 On the other hand, $Y'$ being Stein cobordant to $Y$ and $Y$ is uniruled means that $Y'$ is also uniruled.
 Notice that $\#_k (S^1 \times S^2)$ for any $k>0$ can be obtained from $\#_{k-1} (S^1 \times S^2)$ by the above procedure.
 This finishes the proof of the first statement.

 For the second statement, we claim that for each positive integer $l<k$ the Stein self-cobordism $W_{k,l}$ of $\#_k (S^1 \times S^2)$ obtained by
 attaching $k-l$ canceling $2$-handles to $[0,1] \times \#_k (S^1 \times S^2)$ followed by $k-l$ $1$-handles are different for all $l$.
These can be distinguished by the relative homology groups of the cobordisms $W_{k,l}$ relative to the negative boundary.
\end{proof}

We remark that the non-trivial elements in the monoid constructed above are idempotents, which implies that these monoids do not have a group structure.
In contrast, for each $k$, Stein filling of $\#_k(S^1 \times S^2,\xi)$ is unique up to symplectic deformation equivalence.
This implies that the structure of the monoid is sometimes more complicated than its fillings.

We also remark that there is a natural map from the exact self-cobordism monoid of $(Y,\xi)$
to the monoid of endomorphisms of Heegaard Floer homology/embedded contact homology of  $(Y,\xi)$ modulo automorphism.
Moreover, these endomorphisms preserve the contact invariant.

For example, another way to see that the second statement of Proposition \ref{p:infMonoid} is true is as follows.
The hat version of Heegaard Floer $\widehat{HF}(\#_k (S^1 \times S^2),{\bf s}_0)$ is isomorphic to $H_*(T^k)$, where $T^k$ is the $k$-dimensional torus and ${\bf s}_0$ is the unique $spin^c$ structure such that $c_1({\bf s}_0)=0$ (\cite[Section $9$]{OzSz04}).
  Then, the basic properties of Heegaard Floer homology imply that the induced map $F_{W_{k,l},{\bf s}}: \widehat{HF}(\#_k (S^1 \times S^2),{\bf s}_0) \to \widehat{HF}(\#_k (S^1 \times S^2),{\bf s}_0)$ has rank $2^l$ (actually, the map is a projection to the homology of a sub-torus).
 It then implies the result.
In particular, any non-trivial Stein self-cobordism we constructed does not induce isomorphism on the Heegaard Floer homology.

\begin{question}
Is there any smoothly non-trivial Stein self-cobordism of a Stein fillable contact $3$-manifold which induces isomorphism on the Heegaard Floer homology?
In other words, can Heegaard Floer homology detect the triviality of a Stein self-cobordism  for a Stein fillable $3$-manifold?
\end{question}




\bibliography{MakRef}

\bibliographystyle{plain}



\end{document}